\def\rad{\operatorname {rad}}
\def\a{\alpha}
\def\b{\beta}
\def\c{\gamma}
\def\d{\delta}
\def\l{\lambda}
\def\NN{{\mathbb N}}
\def\PP{{\mathbb P}}
\def\ZZ{{\mathbb Z}}
\def\cal{\mathcal}
\def\cA{{\cal A}}
\def\cB{{\cal B}}
\def\cC{{\cal C}}
\def\cD{{\cal D}}
\def\cF{{\cal F}}
\def\cG{{\cal G}}
\def\Aut{\operatorname{Aut}}
\def\depth{\operatorname{depth}}
\def\det{\operatorname{det}}
\def\dim{\operatorname{dim}}
\def\Ext{\operatorname {Ext}}
\def\GL{\operatorname {GL}}
\def\gcd{\operatorname{gcd}}
\def\GL{\operatorname{GL}}
\def\gldim{\operatorname{gldim}}
\def\grmod{\operatorname{grmod}}
\def\GrMod{\operatorname{GrMod}}
\def\id{\operatorname {id}}
\def\Kdim{\operatorname{Kdim}}
\def\mod{\operatorname{mod}}
\def\Mod{\operatorname{Mod}}
\def\Proj{\operatorname{Proj}}
\def\Spec{\operatorname {Spec}}
\def\uExt{\operatorname{\underline{Ext}}}
\def\uExt{\operatorname{\underline{Ext}}}
\def\uCM{\underline {\operatorname{CM}}}
\def\<{\langle}
\def\>{\rangle}
\def\sD{\mathscr D}
\def\sH{\mathscr H}
\def\rnum#1{\expandafter{\romannumeral #1}}
\def\Rnum#1{\uppercase\expandafter{\romannumeral #1}}
\theoremstyle{plain} 
\newtheorem{theorem}{Theorem}[section]
\newtheorem{corollary}[theorem]{Corollary}
\newtheorem{lemma}[theorem]{Lemma}
\newtheorem{proposition}[theorem]{Proposition}
\theoremstyle{definition}
\newtheorem{definition}[theorem]{Definition}
\newtheorem{example}[theorem]{Example}
\theoremstyle{remark}
\newtheorem{remark}[theorem]{Remark}
\numberwithin{equation}{section}
\begin{document}

\title{Noncommutative affine pencils of conics}

\author{Haigang Hu, Izuru Mori, Koki Takeda, Wenchao Wu}

\address{${^1}$ School of Mathematical Sciences, University of Science and Technology of China, Hefei Anhui 230026, CHINA}

\email{huhaigang@ustc.edu.cn \textnormal{(H. Hu)}}

\address{${^2}$ Department of Mathematics, Faculty of Science, Shizuoka University, Shizuoka 422-8529, JAPAN}

\email{mori.izuru@shizuoka.ac.jp \textnormal{(I. Mori)}}

\address{${^3}$ Graduate School of Science and Technology, Shizuoka University, Shizuoka 422-8529, JAPAN}

\email{takeda.koki.23@shizuoka.ac.jp \textnormal{(K. Takeda)}}

\address{${^4}$ School of Mathematical Sciences, University of Science and Technology of China, Hefei Anhui 230026, CHINA}

\email{wuwch20@mail.ustc.edu.cn \textnormal{(W. Wu)}}

\keywords{noncommutative affine pencil of conics, strongly regular normal sequence, Frobenius algebra}
\thanks {{\it 2000 MSC}: 16E65, 16S38, 16W50}

\thanks{The first author was supported by the National Natural Science Foundation of China (No.\,12371042)
and the Youth Innovation Fund of USTC (No.\,WK0010000086). The second author was supported by JSPS Grant-in-Aid for Scientific Research (C)  Grant Number JP25K06917.}

\begin{abstract} 
This paper is one of the series of papers which are dedicated to the complete classification of noncommutative conics. 
In this paper, we define and study noncommutative affine pencils of conics, and give a complete classification result.
We also fully classify $4$-dimensional Frobenius algebras. It turns out that the classification of noncommutative affine pencils of conics is the same as the classification of $4$-dimensional Frobenius algebras. 
\end{abstract}

\maketitle

\section{Introduction}  

Throughout this paper, let $k$ be an algebraically closed field of characteristic $0$. 

In noncommutative algebraic geometry, an $n$-dimensional quantum polynomial algebra $S$ defined below is a noncommutative analogue of the commutative polynomial algebra $k[x_1, \dots, x_n]$.

\begin{definition}[\cite{AS}]
A noetherian connected graded algebra $S$ is called an {\it $n$-dimensional quantum polynomial  algebra} if 
\begin{enumerate}
\item{} $\gldim S=n<\infty$,  
\item{} $H_S(t)=(1-t)^{-n}$, and   
\item{} (Gorenstein condition) $\uExt^i_S(k, S)\cong \begin{cases} 0 & \text{ if } i\neq n \\ k(n) & \text { if }  i=n. \end{cases}$
\end{enumerate}
\end{definition}

The {\it noncommutative projective scheme} $\operatorname{Proj_{nc}}S$ associated to $S$ in the sense of \cite{AZ} is regarded as a quantum $\PP^{n-1}$,  
so it is reasonable to define a noncommutative quadric hypersurface in a quantum $\PP^{n-1}$ as follows.

\begin{definition} We say that $A=S/(f)$ is (the homogeneous coordinate ring of) a {\it noncommutative quadric hypersurface} in a quantum $\PP^{n-1}$ if $S$ is an $n$-dimensional quantum polynomial algebra and $f\in S_2$ is a regular normal element.   In particular, if $n = 3$, then $A$ is called (the homogeneous coordinate ring of) a {\it noncommutative conic} (in a quantum $\PP^2$). 
\end{definition} 

The study of noncommutative quadric hypersurfaces is active in noncommutative algebraic geometry.  
Recently, there are many interesting and important results (\cite{H}, \cite{HMM}, \cite{HM}, \cite{MU}, \cite{SV}, etc). In particular, \cite{HMM} shows that there are exactly 9 isomorphism classes of noncommutative conics of the form $A=S/(f)$ where $S$ is a 3-dimensional ``Calabi-Yau'' quantum polynomial algebra and $f\in Z(S)_2$ is a homogeneous (regular) central element of degree 2  (see also \cite{HM}).  This is a surprising result since there are infinitely many isomorphism classes of 3-dimensional ``Calabi-Yau'' quantum polynomial algebras, and gives some hope to classify all noncommutative conics, dropping the condition ``Calabi-Yau''. 

This paper is one of the series of  papers (\cite{H}, \cite{HMM}, \cite{HM}) which are dedicated to the complete classification of noncommutative conics.
In this paper, we focus on defining and studying noncommutative affine pencils of conics (Section \ref{sec-napc}).
Let $F=(f_1, \dots, f_m)$ be a sequence of elements in a graded algebra $S$ where $\deg f_i = d_i$. We say that $F$ is a {\it strongly regular normal sequence} if both $F$ and $F^\vee := ((f_1)_{d_1}, \dots (f_m)_{d_m})$ are regular normal sequences.  

\begin{definition}
We say that $E = S/(f,g)$ is  (the coordinate ring of) a {\it noncommutative affine pencil of conics} if $S$ is a $2$-dimensional quantum polynomial algebra and $(f,g)$ is a strongly regular normal sequence of degree $2$ in $S$. 
\end{definition} 

In order to classify noncommutative affine pencils of conics, 
we need to find all strongly regular normal sequences $(f,g)$ of degree $2$ in each $2$-dimensional quantum polynomial algebra $S$. It is a difficult task. In Section \ref{sec-napc}, we provide some useful lemmas to reduce the calculation of finding regular normal elements of degree $2$ in graded algebras. For the purpose of the subsequent paper \cite{HMW}, we introduce the notion of $st$-equivalence and classify noncommutative affine pencils of conics up to $st$-equivalence (Theorem \ref{thm.ANC}). 

The study of Frobenius algebras is always an important subject. For a noncommutative conic $A$, Smith and Van den Bergh proved that the Cohen-Macaulay representation $\uCM^{\ZZ} A$ is equivalent to a bounded derived category $\cD^b(\mod C(A))$ for some $4$-dimensional Frobenius algebra $C(A)$ (\cite{SV}). Due to the importance, we give a complete classification of $4$-dimensional Frobenius algebras (Theorem \ref{thm-4dimfro}) in Section 4. 

By these classifications and the (de)homogenization theory,  we obtain our main result in Section \ref{sec-main}.

\begin{theorem}[Theorem \ref{thm.CF}]
The set of isomorphism classes of noncommutative affine pencils of conics is equal to the set of isomorphism classes of $4$-dimensional Frobenius algebras. 
\end{theorem}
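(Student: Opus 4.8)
The plan is to establish the stated set equality via two inclusions, treating \thmref{thm.ANC} and \thmref{thm-4dimfro} as the two finite classification lists to be matched and using the (de)homogenization theory as the bridge between them. One inclusion is conceptual — every affine pencil is a $4$-dimensional Frobenius algebra — while the other is a verification against the classification: every $4$-dimensional Frobenius algebra is realized as an affine pencil.

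For the inclusion asserting that every noncommutative affine pencil of conics is a $4$-dimensional Frobenius algebra, let $E=S/(f,g)$ with $(f,g)$ a strongly regular normal sequence of degree $2$. First I would pin down the dimension. Since $H_S(t)=(1-t)^{-2}$ and $f^\vee,g^\vee$ are degree-$2$ elements forming a regular normal sequence, the graded quotient $B:=S/(f^\vee,g^\vee)$ has Hilbert series $(1-t^2)^2(1-t)^{-2}=(1+t)^2$, so $\dim_k B=4$. Because the leading forms $f^\vee,g^\vee$ already form a regular sequence, no spurious leading forms appear and the degree filtration gives $\gr E\cong B$; hence $\dim_k E=\dim_k\gr E=4$. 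For the Frobenius property, note that $B$ is the quotient of the quantum polynomial algebra $S$ by a length-$2$ regular normal sequence, hence is a finite-dimensional connected graded algebra satisfying the Gorenstein condition, which is the same as a graded Frobenius algebra. Finally a filtered algebra whose associated graded is Frobenius is itself Frobenius, since a nondegenerate associative form on $\gr E\cong B$ lifts to one on $E$ and nondegeneracy is detected on the associated graded; thus $E$ is a $4$-dimensional Frobenius algebra.

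For the reverse inclusion I would run (de)homogenization backwards against the list of \thmref{thm-4dimfro}. Given a $4$-dimensional Frobenius algebra $R$, pass to the associated graded $\gr R$ for the radical filtration to obtain a graded $4$-dimensional Frobenius algebra; by the graded case this is a projective pencil $B=S/(f^\vee,g^\vee)$ for a suitable $2$-dimensional quantum polynomial algebra $S$, and lifting the relations to inhomogeneous degree-$2$ elements $f,g$ recovers $R\cong S/(f,g)$ with $(f,g)$ strongly regular normal. In practice this reduces to matching the entries of \thmref{thm-4dimfro} term by term with those of \thmref{thm.ANC}: for instance the semisimple algebra $M_2(k)$, which is non-local and hence can only occur because $f,g$ are allowed to be inhomogeneous, is realized as $k_{-1}[x,y]/(x^2-1,\,y^2-1)$ with leading sequence $(x^2,y^2)$.

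The main obstacle is this reverse inclusion, and within it the surjectivity onto every isomorphism class on the Frobenius list together with the reconciliation of $st$-equivalence (used in \thmref{thm.ANC}) with honest algebra isomorphism of Frobenius algebras. One must check that each $4$-dimensional Frobenius algebra — including the semisimple and other non-local ones — is actually realized by some strongly regular normal sequence, and that the passage to $\gr$ and back does not merge or split isomorphism classes. Once the two finite lists are placed in bijection in this way, the equality of sets of isomorphism classes asserted in \thmref{thm.CF} follows.
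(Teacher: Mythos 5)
Your forward inclusion ($\cA_{2,2}^\vee\subseteq\cF_2$) is a genuinely different route from the paper, which never argues this way: the paper obtains \thmref{thm.CF} entirely by matching classifications, not by showing directly that an affine pencil is Frobenius. Your argument (pass to the degree filtration, identify $\gr E\cong S/(f^\vee,g^\vee)$, lift the graded Frobenius form) is essentially sound, though both key steps are under-justified: the absence of ``spurious leading forms'' really uses normality (since $f$ is normal, $(f)=fS$, so every element of the ideal is $fc$ and its leading form is $f^\vee c^\vee$ by regularity of $f^\vee$; regularity of the leading forms alone would not suffice), and the lifting of the form uses that the Frobenius functional on the connected graded algebra $B$ may be taken to be projection onto the top degree, so that it descends to $E\to F_2E/F_1E\cong k$ with nondegeneracy checked on symbols. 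These are fixable, and this direction would be a nice conceptual complement to the paper.

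The genuine gap is the reverse inclusion, which you correctly identify as the main obstacle but do not close. The proposed mechanism---take $\gr R$ for the radical filtration, recognize it as $S/(f^\vee,g^\vee)$, lift the relations---fails in principle and fails for most algebras in \thmref{thm-4dimfro}. First, the associated graded of a Frobenius algebra with respect to the radical filtration need not be Frobenius: $k[x,y]/(xy,\,x^2-y^3)$ is a local Frobenius algebra whose associated graded has two-dimensional socle. Second, and decisive here: for every non-local algebra on the list---$k^4$, $k^2\times k[x]/(x^2)$, $k\times k[x]/(x^3)$, $(k[x]/(x^2))^2$, $M_2(k)$, and the quiver algebra---one has $\dim_k(\gr R)_0=\dim_k R/\rad R\geq 2$, so $\gr R$ is not connected graded and can never be of the form $S/(f^\vee,g^\vee)$; the reduction to ``the graded case'' collapses exactly for the six classes where realization is nontrivial, while the four local classes where it works are already graded and need no argument. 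Third, even when $\gr R\cong S/(f^\vee,g^\vee)$, the claim that lifting relations recovers $R$ as $S/(f,g)$ with $(f,g)$ strongly regular normal is precisely the realization statement to be proved; a filtered lift a priori presents $R$ as a quotient of a filtered deformation of $S$, not of $S$ itself.

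Your fallback---matching entries of \thmref{thm-4dimfro} against \thmref{thm.ANC}---is indeed how the paper handles the noncommutative half (together with explicit isomorphisms such as $M_2(k)\cong k_{-1}[x,y]/(x^2+1,y^2+1)$ and the quiver algebra $\cong k_{-1}[x,y]/(x^2,y^2+1)$, and the observation that the natural surjection from $st$-classes to isomorphism classes makes this legitimate despite Example \ref{ex.stc}). But it cannot work for the commutative half: \thmref{thm.ANC}(2) gives no finite list of commutative pencils, so there is nothing to match. The paper instead proves $\cB_{2,2}^\vee=\cG_2$ by homogenization and dehomogenization (Theorems \ref{thm.1} and \ref{thm.2}) combined with the earlier bijections $\cC_{3,1}\to\cB_{3,2}$, $A\mapsto A^!$ (\thmref{thm.HMd}) and $\cC_{3,1}\to\cG_2$, $A\mapsto C(A)$ (\thmref{thm.CGb}), linked by $C(A)\cong\sD(A^!)$ (Lemma \ref{lem.3}). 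Nothing in your proposal substitutes for this step---e.g.\ exhibiting and verifying strongly regular normal sequences realizing all six commutative Frobenius algebras, and showing conversely that every commutative affine pencil is Frobenius---so the commutative case is simply missing.
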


We expect that there is a close relationship between noncommutative conics and 4-dimensional Frobenius algebras.  We have already mentioned that there is a way to construct a 4-dimensional Frobenius algebra $C(A)$ from a noncommutative conic $A$, however, we have no idea on how to construct a noncommutative conic from a 4-dimensional Frobenius algebra in general.  In the subsequent paper \cite{HMW}, we will discuss a way to construct a noncommutative conic from a noncommutative affine pencil of conics, which solves the problem by the above theorem. \\

\noindent {\bf Acknowledgement.} We would like to thank Ryoma Suzuki for his assistance in checking some of the computations. 

\section{Preliminaries} 

All algebras and vector spaces are over $k$. A graded algebra is a $\mathbb{Z}$-graded algebra.

For an algebra $R$, we denote by $\Mod R$ the category of right $R$-modules, and by $\mod R$ the full subcategory consisting of finitely generated modules. %If $R$ is right noetherian, the {\it noncommutative affine scheme} associated to $R$ is defined to be $\Spec_{nc} R = (\mod R, R)$.

Let $A$ be a graded algebra. We denote by $\GrMod A$ the category of graded right $A$-modules with degree preserving right $A$-module homomorphisms, and by $\grmod A$ the full subcategory of $\GrMod A$ consisting of finitely generated graded right $A$-modules. For $M\in \GrMod A$ and $j \in \ZZ$, we define the shift  $M(j) \in \GrMod A$ by $M(j)_i := M_{j+i}$. For $M,N\in \GrMod A$, we define
$$
 \underline{\Ext}_A^i(M,N) := \bigoplus_{j\in \ZZ}{\Ext}^i_{\GrMod A}(M,N(j)). 
$$ 

For a graded algebra $A = \bigoplus_{i \in \mathbb{Z}} A_i$, we say that $A$ is {\it connected graded} if $A_0 = k$ and $A_i = 0$ for all $i < 0$, and $A$ is {\it locally finite} if $\dim_k A_i < \infty$ for all $i \in \mathbb{Z}$. If $A$ is a locally finite graded algebra, then we define the {\it Hilbert series} of $A$ by 
$
H_A(t) : = \sum_{i \in \mathbb{Z}} (\dim_{k} A_i) t^i \in \mathbb{Z}[[t,t^{-1}]].
$

Let $V$ be a finite dimensional vector space, and $T(V)$ the tensor algebra. Let $A = T(V)/(W)$ be a quadratic algebra, i.e., $W \subset V \otimes V$. The {\it quadratic dual} is defined by $A^! := T(V^*)/(W^\perp)$ where $V^*$ is the dual space of $V$, and $W^\perp$ is the orthogonal complement of $W$. 

\begin{definition}
A locally finite connected graded algebra $A$ is called a {\it Koszul algebra} if the trivial module $k_A:=A/A_{\geq 1}$ has a free resolution
$$
\xymatrix{
\cdots \ar[r] & P^i \ar[r] & \cdots \ar[r] & P^1 \ar[r] & P^0 \ar[r] & k_A \ar[r] & 0
}
$$
where $P^i$ is a graded free right $A$-module generated in degree $i$ for each $i \geq 0$. 
\end{definition}

Every Koszul algebra is quadratic. 

\begin{theorem} [{\cite[Theorem 5.9]{Sm96}}] \label{thm.KoHi}  If $A$ is a Koszul algebra, then $H_{A^!}(t)=1/H_A(-t)$.  
\end{theorem}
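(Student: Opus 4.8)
The plan is to read off the identity from the minimal graded free resolution of the trivial module that the Koszul hypothesis provides. Since $A$ is Koszul, $k_A$ has a free resolution
\[
\cdots \to P^i \to \cdots \to P^1 \to P^0 \to k_A \to 0
\]
with $P^i$ generated in degree $i$. Writing $b_i$ for the rank of $P^i$, we have $P^i \cong A(-i)^{b_i}$, and since $A(-i)_j = A_{j-i}$ the Hilbert series of each term is $H_{P^i}(t) = b_i\, t^i H_A(t)$. The whole argument then rests on (a) computing the Betti numbers $b_i$ and (b) turning exactness of the resolution into a Hilbert-series relation.

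The substantive step is (a): I would show $b_i = \dim_k (A^!)_i$. This is the genuine Koszul-duality input. One way is to realize the resolution explicitly as the Koszul complex with terms $P^i = A \otimes_k (A^!_i)^*$, the differential $A \otimes_k (A^!_i)^* \to A \otimes_k (A^!_{i-1})^*$ being induced by the inclusions $(A^!_i)^* \hookrightarrow V \otimes_k (A^!_{i-1})^*$ dual to the multiplication of $A^!$, followed by multiplication in $A$; the Koszul property is exactly the assertion that this complex is a (minimal, linear) resolution of $k_A$, and since $\dim_k (A^!_i)^* = \dim_k (A^!)_i$ this yields the claim. Equivalently, one argues through the Yoneda algebra: for a Koszul algebra $\Ext^j_{\GrMod A}(k, k(\ell))$ vanishes unless $\ell = -j$, and $\bigoplus_j \Ext^j_A(k,k) \cong A^!$ as graded algebras, so the minimal Betti number $b_i$ equals $\dim_k \Ext^i_A(k,k) = \dim_k (A^!)_i$.

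With the Betti numbers in hand, step (b) is a formal Euler-characteristic computation. Because $P^i$ is generated in degree $i$ we have $(P^i)_j = 0$ for $j < i$, so for each fixed $j$ only finitely many terms contribute in degree $j$; the alternating sum of Hilbert series is thus well defined in $\ZZ[[t]]$, and additivity along the exact resolution gives
\[
\sum_{i \geq 0} (-1)^i H_{P^i}(t) = H_{k_A}(t) = 1.
\]
Substituting $H_{P^i}(t) = b_i\, t^i H_A(t)$ and factoring out $H_A(t)$ yields $H_A(t)\sum_{i\geq 0} b_i (-t)^i = 1$, that is $H_A(t)\, H_{A^!}(-t) = 1$. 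Replacing $t$ by $-t$ gives $H_{A^!}(t) = 1/H_A(-t)$, as claimed. (One can sanity-check against $A = k[x_1,\dots,x_n]$, where $H_A(t) = (1-t)^{-n}$ and $A^! = \Lambda(V^*)$ has $H_{A^!}(t) = (1+t)^n$.)

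The main obstacle is step (a): identifying $b_i$ with $\dim_k (A^!)_i$. The Euler-characteristic manipulation is purely formal once this is available, but the identification is precisely where the Koszul hypothesis does real work. Concretely, the hard part is proving that the minimal free resolution is \emph{linear} — equivalently, that the explicitly constructed Koszul complex is exact in positive cohomological degrees — which is the defining feature of Koszulness rather than a consequence of mere quadraticity.
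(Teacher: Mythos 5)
Your proof is correct and is the standard argument: take the linear (hence automatically minimal) free resolution of $k_A$, identify the Betti numbers $b_i$ with $\dim_k (A^!)_i$ via Koszul duality, and read off the identity $H_A(t)\,H_{A^!}(-t)=1$ from the Euler characteristic of the resolution. The paper gives no proof of its own here --- it cites the result as Theorem 5.9 of \cite{Sm96} --- and your argument is essentially the proof found in that reference, so the two approaches coincide.
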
 

\begin{theorem} [{\cite[Theorem 5.11]{Sm96}}] \label{thm.qpak} 
Every $n$-dimensional quantum polynomial algebra $S$ is Koszul so that $H_{S^!}(t)=(1+t)^n$.
\end{theorem}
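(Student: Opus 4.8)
The plan is to prove that the minimal graded free resolution of the trivial right module $k_S$ is \emph{linear}, i.e.\ its $i$-th term is free on generators sitting in degree $i$; this is exactly the Koszul property, and the identity for $H_{S^!}$ will then drop out of \thmref{thm.KoHi}. Write the minimal free resolution as
\[
\cdots \to P^i \to \cdots \to P^1 \to P^0 \to k_S \to 0, \qquad P^i=\bigoplus_{j} S(-j)^{\beta_{ij}},
\]
where $\beta_{ij}=\dim_k \uTor^S_i(k,k)_j$ are the graded Betti numbers; since $\gldim S=n$, the resolution has length $n$. I would first record the elementary lower bound coming from minimality: the differentials have entries in $S_{\geq 1}$, so a minimal generator cannot map to $0$, and the lowest generator degrees $a_i:=\min\{j:\beta_{ij}\neq 0\}$ form a strictly increasing sequence with $a_0=0$; hence $a_i\geq i$ for every $i$.

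Next I would extract a global numerical identity by taking the alternating sum of Hilbert series along the resolution: from $1=H_k(t)=H_S(t)\sum_{i,j}(-1)^i\beta_{ij}t^j$ and $H_S(t)=(1-t)^{-n}$ one gets $\sum_{i,j}(-1)^i\beta_{ij}t^j=(1-t)^n$. To control the \emph{top} degrees I would use the Gorenstein condition together with the balancedness of $\uTor$ (which makes the right and left graded Betti numbers of $k$ agree). Applying $\RuHom_S(-,S)$ to $P^\bullet$ and invoking $\uExt^i_S(k,S)=0$ for $i\neq n$ and $\uExt^n_S(k,S)\cong k(n)$, the dualized complex $\uHom_S(P^\bullet,S)$ becomes a minimal graded free resolution of the left module $k(n)$; reading off its Betti numbers and matching them against those of $k$ yields the self-duality $\beta_{ij}=\beta_{n-i,\,n-j}$. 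In particular the top generator degrees $c_i:=\max\{j:\beta_{ij}\neq 0\}$ satisfy $c_i=n-a_{n-i}\leq n-(n-i)=i$.

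Combining the two bounds with the trivial inequality $a_i\leq c_i$ forces $i\leq a_i\leq c_i\leq i$, so $a_i=c_i=i$ and therefore $\beta_{ij}=0$ whenever $j\neq i$; this says precisely that $P^i$ is generated in degree $i$, i.e.\ that $S$ is Koszul. Substituting $\beta_{ij}=\delta_{ij}\beta_{ii}$ back into $\sum_{i,j}(-1)^i\beta_{ij}t^j=(1-t)^n$ identifies $\beta_{ii}=\binom{n}{i}$, so the resolution is the expected Koszul complex $P^i=S(-i)^{\binom{n}{i}}$. Finally, since $S$ is Koszul, \thmref{thm.KoHi} gives $H_{S^!}(t)=1/H_S(-t)=1/(1+t)^{-n}=(1+t)^n$, as claimed. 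I expect the main obstacle to be the duality step: converting the Gorenstein condition into the Betti-number symmetry $\beta_{ij}=\beta_{n-i,n-j}$ requires careful bookkeeping of the passage between right and left modules, the internal shift by $n$, and the uniqueness of minimal graded free resolutions, whereas the remaining steps are purely numerical.
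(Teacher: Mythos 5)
The paper itself offers no proof of this statement---it is quoted directly from \cite[Theorem 5.11]{Sm96}---so the only meaningful comparison is with the cited source, whose argument yours essentially reconstructs, correctly. Your two bounds are exactly the standard mechanism: minimality of the resolution (graded Nakayama) forces the lowest generator degrees to satisfy $a_i\geq i$, while dualizing the minimal resolution of $k_S$ by $\uHom_S(-,S)$ and invoking the Gorenstein condition (together with balancedness of $\uTor$ and uniqueness of minimal graded free resolutions) yields the Betti symmetry $\beta_{ij}=\beta_{n-i,\,n-j}$, hence the upper bound $c_i\leq i$; linearity follows, and the identity $H_{S^!}(t)=1/H_S(-t)=(1+t)^n$ is then immediate from Theorem~\ref{thm.KoHi}.
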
   

\begin{example} \label{exm-2dimqpa2}
It is one of the basic facts in noncommutative algebraic geometry that  every graded algebra $A=k\<x, y\>/(h)$ where $0\neq h\in k\<x, y\>_2$ is isomorphic as a graded algebra to exactly one of the following graded  algebras in Table \ref{tab-qaxyh} where $k_{\l}[x, y]\cong k_{\l'}[x, y]$ as graded algebras if and only if $\l'=\l^{\pm 1}$.  It follows that a graded algebra $S$ is a $2$-dimensional quantum polynomial algebra if and only if $S\cong k_J[x, y]$ or $S\cong k_\l[x, y]$ as graded algebras for some $\l\neq 0$.  

\begin{center} 
\begin{table}[h]
\caption{Quadratic algebras $A=k\<x, y\>/(h)$.} \label{tab-qaxyh}
\begin{tabular}{|c|c|c|c|c|c|}
\hline
$A$                & (noeth.) & $\gldim A$   & $H_A(t)$ & (Gor.) & (Kos.)                \\
\hline
$k\<x, y\>/(x^2)$ & No & $\infty$ & $(1+t)/(1-t-t^2)$ & No & Yes \\
\hline 
$k\<x, y\>/(xy)$ & No & 2 & $1/(1-t)^2$ & No & Yes \\
\hline 
 $k_J[x, y]:=k\<x, y\>/(xy-yx+y^2)$ & Yes   & 2 & $1/(1-t)^2$ & Yes & Yes                    \\ 
\hline 
$k_{\l}[x, y]:=k\<x, y\>/(xy-\l yx) \text{ where } \l\neq 0$   & Yes  & 2 & $1/(1-t)^2$ & Yes & Yes  \\   
\hline
\end{tabular} 
\end{table}
\end{center} 
% \end{table}
\end{example}

\begin{definition} \label{def.gcliffod}
For a sequence  $M = (M_1, \dots, M_n)$ of linearly independent symmetric matrices of size $n$,  the {\it graded Clifford algebra} associated to $M$ is a graded algebra defined by
\begin{align*}
A: = k\<x_1, \dots, x_n, y_1, \dots, y_n\>/  (x_ix_j+x_jx_i-\sum_{m=1}^n(M_m)_{ij}y_m, & \\
x_iy_j-y_jx_i, y_iy_j-y_j & y_i)_{1\leq i, j\leq n}
\end{align*}
where $\deg x_i=1, \deg y_j=2$.   
%We call $A$ a {\it graded Clifford algebra}. 
% If it is also an $n$-dimensional quantum polynomial algebra, then we call $C(F)$ an {\it $n$-dimensional Clifford quantum polynomial algebra.}  
\end{definition}

\begin{definition} Let $R$ be an algebra and $f\in R$.
\begin{enumerate}
\item{} We say that $f$ is right (left) regular if, for every $g\in R$,  $gf=0$ ($fg=0$) implies $g=0$. % for $f\in A$.  
We say that $f$ is regular if it is both right and left regular. 
\item{} We say that $f$ is normal if $Rf=fR$.
\end{enumerate} 
\end{definition} 

We often implicitly assume that $f\in R$ is not a unit when we say that $f$ is a regular normal element.  In particular, for a connected graded algebra $A$, we often implicitly assume that $f\in A_{\geq 1}$ when we say that $f$ is a homogeneous regular normal element.

\begin{definition}  \label{defn-regseq}
Let $R$ be an algebra, and $F = (f_1, \dots, f_m)$ a sequence of elements in $R$.
\begin{enumerate}
\item[(1)] We say that  $F$ is {\it regular}  if $\bar f_i\in R/(f_1, \dots, f_{i-1})$ is regular for every $i=1, \dots, m$. 
\item[(2)] We say that $F$ is {\it normal (resp. central)} if $\bar f_i\in R/(f_1, \dots, f_{i-1})$ is normal (resp. central) for every $i=1, \dots, m$.  
\end{enumerate}
In case $R$ is a graded algebra: 
\begin{enumerate}
\item[(3)] We say that $F$ is of degree $d$
if $\deg f_i = d$  for every $i=1, \dots, m$.
\item[(4)] We say that $F$ is {\it homogeneous} if $f_i$ is homogeneous for every $i=1, \dots, m$.
\end{enumerate} 
\end{definition}

For $f_1, \dots, f_m\in R$, the notation $(f_1, \dots, f_m)$ has two possible meanings.  If we write $R/(f_1, \dots, f_m)$, then it is a two-sided ideal of $R$ generated by $f_1, \dots, f_m$.  If we write $F=(f_1, \dots, f_m)$, then it is a sequence of elements.  To avoid such a confusion, 
we often denote by $I_F$ the two-sided ideal of $R$ generated by  $f_1, \dots, f_m$ when $F=(f_1, \dots, f_m)$. 
% $F$. 

\begin{definition} 
We define
\begin{align*}
\cA_{n, m} & : = \{S/(h_1, \dots, h_m) \mid \text{ $S$ is an $n$-dimensional quantum polynomial algebra and } \\
& \qquad \text  {$H=(h_1, \dots, h_m)$ is a homogeneous regular normal sequence of degree 2 in $S$}\}/\cong, \\
\cB_{n, m} & :=\{S/(h_1, \dots, h_m)\in \cA_{n, m}\mid S=k[x_1, \dots, x_n]\}/\cong, \\ 
\cC_{n, m} & :=\{S/(h_1, \dots, h_m)\in \cA_{n, m}\mid S \text { is a graded Clifford algebra and } h_1, \dots, h_m\in Z(S)_2\}/\cong. 
\end{align*}

A graded algebra $A$ is called a {\it noncommutative quadratic complete intersection} if $A\in \cA_{n, m}$.  In particular,  $A$ is called a {\it noncommutative conic}, a {\it noncommutative pencil of conics}, 
if $A\in \cA_{3, 1}, A\in \cA_{3,2}$, respectively. A commutative graded algebra $B$ is called a conic, a pencil of conics, 
if $B\in \cB_{3, 1}, B\in \cB_{3,2}$, respectively. 
\end{definition} 

We remark that $\cA_{n,0}$ is the set of isomorphism classes of $n$-dimensional quantum polynomial algebras,  and $\cB_{n,0} = \{k[x_1, \dots, x_n] \}$.

\begin{lemma}[{\cite[Corollary 1.4]{ST}, \cite[Section 5]{HY}}] \label{lem.asf} 
For every $A=S/(f)\in \cA_{n, 1}$, there exists a regular normal element $f^!\in A^!_2$ unique up to scalar such that $S^!=A^!/(f^!)$.  Moreover, if $f \in Z(S)_2$, then $f^!\in Z(A^!)_2$. 
\end{lemma}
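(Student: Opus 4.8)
The plan is to carry everything at the level of quadratic data and then extract regularity from a Hilbert series count. Write $V := S_1$ and present $S = T(V)/(W_S)$ with $W_S \subseteq V\otimes V$ its space of quadratic relations; by \thmref{thm.qpak}, $S$ is Koszul. Since $\deg f = 2$ I may lift $f$ to some $\tilde f \in V\otimes V$, and then $A = S/(f) = T(V)/(W_A)$ with $W_A = W_S \oplus k\tilde f$ (here $\tilde f\notin W_S$ since $f\neq 0$ in $S_2$). Passing to quadratic duals gives $A^! = T(V^*)/(W_A^\perp)$ and $S^! = T(V^*)/(W_S^\perp)$, and from $W_S\subseteq W_A$ I get $W_A^\perp\subseteq W_S^\perp$ with $\dim (W_S^\perp/W_A^\perp) = 1$. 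First I would choose $f^!\in W_S^\perp$ spanning a complement of $W_A^\perp$; then in $T(V^*)$ the ideal generated by $W_S^\perp$ equals the ideal generated by $W_A^\perp$ together with $f^!$, so $S^! = A^!/(f^!)$, and $f^!$ is unique up to scalar because $W_S^\perp/W_A^\perp$ is one-dimensional. This settles existence, uniqueness, and the identity $S^! = A^!/(f^!)$; the remaining content is that $f^!$ is regular and normal.

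For normality I would dualize the normalizing automorphism of $f$. Regularity and normality of $f$ in $S$ yield a graded algebra automorphism $\nu$ of $S$, determined by $\nu_1 := \nu|_V\in\GL(V)$, with $v\tilde f \equiv \tilde f\,\nu_1(v)\pmod{W_S\otimes V + V\otimes W_S}$ for all $v\in V$. Using the perfect pairing $V^{\otimes 3}\times (V^*)^{\otimes 3}\to k$ together with the identities $(W_S\otimes V)^\perp = W_S^\perp\otimes V^*$ and $(V\otimes W_S)^\perp = V^*\otimes W_S^\perp$, I would translate this congruence into a dual congruence $\xi\, f^! \equiv f^!\,\mu_1(\xi)\pmod{W_A^\perp\otimes V^* + V^*\otimes W_A^\perp}$ for all $\xi\in V^*$, with $\mu_1 = (\nu_1^*)^{-1}$; this exhibits $f^!$ as normal in $A^!$, since a congruence in the generating degree propagates to all degrees because $A^!$ is generated in degree $1$ and $\mu_1$ extends to an algebra automorphism. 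In the central case $\nu_1 = \id$, whence $\mu_1 = \id$ and $f^!$ commutes with all generators, giving the ``moreover'' statement.

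Granting normality, regularity follows from a Hilbert series computation. Since $S$ is Koszul and $f$ is regular of degree $2$, one has $H_A(t) = (1-t^2)H_S(t) = (1+t)/(1-t)^{n-1}$; moreover $A$ is Koszul (the quotient of a Koszul algebra by a regular normal element of degree $2$ is Koszul), so $A^!$ is Koszul and $H_{A^!}(t) = 1/H_A(-t) = (1+t)^{n-1}/(1-t)$ by \thmref{thm.KoHi}. Because $f^!$ is normal, left multiplication by $f^!$ fits into an exact sequence of graded right $A^!$-modules
\[
0 \to K \to A^!(-2) \xrightarrow{\,f^!\,} A^! \to S^! \to 0,
\]
where $K$ is the kernel. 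Comparing Hilbert series gives $H_K(t) = H_{S^!}(t) - (1-t^2)H_{A^!}(t) = (1+t)^n - (1+t)^n = 0$, so $K = 0$ and $f^!$ is left regular; the symmetric argument with right multiplication gives right regularity. Hence $f^!$ is a regular normal element of $A^!_2$.

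The main obstacle is the normality step: converting the normalizing congruence for $\tilde f$ into the one for $f^!$ requires carefully identifying the orthogonal complement of $W_S\otimes V + V\otimes W_S$ under the triple-tensor pairing and verifying that the normalizer $\nu_1$ dualizes to a genuine normalizer $\mu_1$ of $f^!$. Everything else is either formal quadratic duality or Hilbert series bookkeeping that is already pinned down by \thmref{thm.qpak} and \thmref{thm.KoHi}.
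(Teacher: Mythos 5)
The paper gives no internal proof of Lemma \ref{lem.asf}: it is imported wholesale from \cite[Corollary 1.4]{ST} and \cite[Section 5]{HY}, so there is nothing in this document to compare your argument against. Your proposal is in effect a reconstruction of the cited result, and its strategy is sound. The quadratic-duality bookkeeping ($W_A=W_S\oplus k\tilde f$, hence $W_A^\perp\subseteq W_S^\perp$ of codimension one, hence $S^!=A^!/(f^!)$ with $f^!$ unique up to scalar) is correct, and the Hilbert-series count for regularity is correct once normality of $f^!$ and Koszulity of $A$ are in hand. The step you flag as the main obstacle does go through; the point that makes it work is this: any $z\in (W_A\otimes V)\cap (V\otimes W_A)$ decomposes uniquely as $z=w+\tilde f\otimes u=w'+u'\otimes \tilde f$ with $w\in W_S\otimes V$, $w'\in V\otimes W_S$, $u,u'\in V$ (uniqueness because $\tilde f\notin W_S$); projecting to $S_3$ gives $f\bar u=\bar u'f=f\nu_1(\bar u')$, so regularity of $f$ forces $u=\nu_1(u')$; pairing $\xi\otimes\widetilde{f^!}-\widetilde{f^!}\otimes\mu_1(\xi)$ against such $z$ then yields $\xi(u')\langle \widetilde{f^!},\tilde f\rangle-\langle \widetilde{f^!},\tilde f\rangle\,\xi(\nu_1^{-1}(u))=0$, which is exactly the dual congruence you want.

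Two details you should make explicit. First, for the degree-one congruence to propagate to normality you need $\mu_1$ to genuinely induce an automorphism of $A^!$, i.e. $(\mu_1\otimes\mu_1)(W_A^\perp)=W_A^\perp$; this follows from $(\nu_1\otimes\nu_1)(W_A)=W_A$, which requires not only $(\nu_1\otimes\nu_1)(W_S)=W_S$ but also $\nu(f)=f$ --- immediate from $ff=f\nu(f)$ and regularity, but it must be said. Second, your regularity step invokes the theorem that the quotient of a Koszul algebra by a regular normal element of degree $2$ is Koszul; this is itself nontrivial (Backelin--Fr\"oberg in the central case, and the normal case is essentially the main theorem of \cite{ST}, reducible to the central case by a Zhang twist). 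So your proof is not independent of \cite{ST}; that is perfectly acceptable for establishing the lemma, but it means the Koszulity transfer, not the duality bookkeeping, is where the real depth sits.
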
 

\begin{definition} For $A\in \cA_{n, 1}$, we define $C(A):=A^![{f^!}^{-1}]_0$. 
\end{definition} 

It is not obvious from the definition but $C(A)$ is independent of a choice of the pair $(S, f)$ such that $A=S/(f)\in \cA_{n, 1}$ up to isomorphism (see \cite[Lemma 5.3 (2)]{HMM}).  

We denote by $\cF_2$ the set of isomorphism classes of 4-dimensional Frobenius algebras, and by $\cG_2$ the set of isomorphism classes of 4-dimensional commutative Frobenius algebras.  In our earlier papers, we proved the following two important bijections.  

\begin{theorem} [{\cite[Theorem 3.28]{HM}}] \label{thm.HMd}  
The map $\cC_{n, m}\to \cB_{n, n-m}; \; A\mapsto A^!$ is a bijection for every $n\in \NN$ and $m=0, \dots, n$. 
\end{theorem}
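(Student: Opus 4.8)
The plan is to use that quadratic duality $(-)^{!}$ is an involution and thereby reduce bijectivity to two well-definedness statements. Every algebra in $\cC_{n,m}$ or $\cB_{n,n-m}$ is a quotient of a Koszul quantum polynomial algebra by a homogeneous regular normal sequence of degree-$2$ elements; adjoining degree-$2$ relations keeps an algebra quadratic, and (by induction along the sequence, using that a regular normal degree-$2$ element preserves Koszulness) keeps it Koszul. Hence $(A^{!})^{!}\cong A$ for all such $A$. It therefore suffices to prove that $A\mapsto A^{!}$ sends $\cC_{n,m}$ into $\cB_{n,n-m}$ and that $B\mapsto B^{!}$ sends $\cB_{n,n-m}$ into $\cC_{n,m}$: injectivity of the first map is immediate from $(A^{!})^{!}\cong A$, and surjectivity follows because each $B\in\cB_{n,n-m}$ equals $(B^{!})^{!}$ with $B^{!}\in\cC_{n,m}$.

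Two tools drive the well-definedness. The first is the dual-sequence form of \lemref{lem.asf}: if $R$ is a quadratic algebra and $f\in R_{2}$ is regular normal with $R/(f)$ quadratic, then passing to quadratic duals turns the surjection $R\twoheadrightarrow R/(f)$ into a surjection $(R/(f))^{!}\twoheadrightarrow R^{!}$ whose kernel is generated by a single regular normal $f^{!}\in (R/(f))^{!}_{2}$, so that $R^{!}=(R/(f))^{!}/(f^{!})$, and centrality of $f$ is inherited by $f^{!}$. Numerically this is forced: enlarging the space of quadratic relations by one dimension shrinks its orthogonal complement by one dimension. The second tool is that a regular normal sequence of degree-$2$ elements extends to a maximal one: in a graded Clifford algebra $S$ the central degree-$2$ elements form the $n$-dimensional space $Z(S)_{2}=\langle y_{1},\dots,y_{n}\rangle$ on which $(y_{1},\dots,y_{n})$ is a regular sequence, and in $k[x_{1},\dots,x_{n}]$ a length-$(n-m)$ regular sequence of quadrics extends to a complete intersection of $n$ quadrics.

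With these I would treat both directions by "extend to a corner, then dualize the extension." For $A=S/(h_{1},\dots,h_{m})\in\cC_{n,m}$ with $S$ graded Clifford, extend $(h_{1},\dots,h_{m})$ to a basis $(h_{1},\dots,h_{n})$ of $Z(S)_{2}$; then $S/(h_{1},\dots,h_{n})$ is the exterior algebra $\Lambda$, and $\Lambda^{!}=k[x_{1},\dots,x_{n}]$. Iterating the first tool along the surjection $A\twoheadrightarrow\Lambda$ presents $A^{!}$ as $k[x_{1},\dots,x_{n}]$ modulo the $(n-m)$ regular normal quadrics dual to $h_{m+1},\dots,h_{n}$, so $A^{!}\in\cB_{n,n-m}$. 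Symmetrically, for $B=k[x_{1},\dots,x_{n}]/(h_{1},\dots,h_{n-m})\in\cB_{n,n-m}$, extend to a complete intersection $C=k[x_{1},\dots,x_{n}]/(h_{1},\dots,h_{n})\in\cB_{n,n}$; if $C^{!}$ is a graded Clifford algebra, then iterating the first tool along $B\twoheadrightarrow C$ presents $B^{!}$ as $C^{!}$ modulo $m$ central regular normal quadrics, so $B^{!}\in\cC_{n,m}$. Throughout, \thmref{thm.qpak} and \thmref{thm.KoHi} give the bookkeeping $H_{B}(t)=(1+t)^{n-m}/(1-t)^{m}$ and $H_{B^{!}}(t)=(1+t)^{m}/(1-t)^{n-m}$, confirming that the dual sequences have the correct length and the targets the correct Hilbert series.

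The crux, and the only genuinely nontrivial input, is the corner identification $\cB_{n,n}\leftrightarrow\cC_{n,0}$: that the quadratic dual of a complete intersection of $n$ quadrics in $k[x_{1},\dots,x_{n}]$ is a graded Clifford algebra, and conversely. Here one cannot stay at the level of Hilbert series; one must match the two packets of linear-algebra data — the $n$ symmetric matrices defining the commutative quadrics against the net of symmetric matrices $M_{1},\dots,M_{n}$ defining the Clifford structure — and verify that the complete-intersection (equivalently base-point-free) hypothesis is exactly what forces the dual to be a quantum polynomial graded Clifford algebra with the $y_{j}$ forming a regular sequence. Establishing this equivalence of quadric data, and with it the regularity and normality of the dual sequences produced by the first tool, is where the real work lies; the remainder of the argument is formal manipulation of quadratic duals and regular normal sequences.
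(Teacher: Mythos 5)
First, a framing remark: the paper does not prove this statement at all --- it is quoted from \cite[Theorem 3.28]{HM} --- so there is no internal proof to compare yours against; your argument has to stand on its own. Its skeleton is reasonable: reducing bijectivity to the two well-definedness statements via $(A^!)^!\cong A$ is sound, and the plan ``extend the sequence to a corner, then dualize step by step'' is genuinely in the spirit of the Clifford-deformation machinery of \cite{HM} and \cite{HY}. The problem is that every load-bearing step is asserted rather than proved.

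Concretely: (i) your ``first tool'' is used far beyond what \lemref{lem.asf} provides. That lemma requires the ambient algebra to be a quantum polynomial algebra, whereas the intermediate quotients $S/(h_1,\dots,h_i)$ in your iteration are not (wrong Hilbert series and global dimension), so the lemma cannot be iterated as stated; you replace it by a claim for arbitrary quadratic algebras. Your dimension count does produce a single degree-$2$ element $f^!$ generating the kernel of $(R/(f))^!\to R^!$, but it says nothing about the \emph{regularity} or \emph{normality} of $f^!$, which is the substantive content --- this is a theorem of Shelton--Tingey/He--Ye (\cite{ST}, \cite{HY}) requiring Koszul-type hypotheses, not a formal consequence of orthogonal complements. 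Without it, membership of $A^!$ in $\cB_{n,n-m}$ (and of $B^!$ in $\cC_{n,m}$) is never established, so neither well-definedness statement is proved. (ii) The corner identification $\cB_{n,n}\leftrightarrow\cC_{n,0}$ --- that the quadratic dual of a complete intersection of $n$ quadrics in $k[x_1,\dots,x_n]$ is a graded Clifford algebra which is moreover a quantum polynomial algebra, and conversely --- you explicitly defer (``where the real work lies''). That is not a deferrable detail: it is the core of the theorem, and the rest of your argument is bookkeeping around it. (iii) Several side claims also need proof: that $Z(S)_2=\langle y_1,\dots,y_n\rangle$ for a graded Clifford quantum polynomial algebra, that a central regular normal sequence of quadrics in such an $S$ extends to a regular sequence spanning $\langle y_1,\dots,y_n\rangle$, and that Koszulness passes to the quotient by a regular normal quadric (needed for your induction). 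As it stands, the proposal is a plausible outline of the strategy used in \cite{HM}, not a proof.
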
 

\begin{theorem} [{\cite[Corollary 4.10]{H}, \cite[Remark 3.29]{HM}}] \label{thm.CGb} 
The map $\cC_{3, 1}\to \cG_2; \; A \mapsto C(A)$ is a bijection. 
\end{theorem}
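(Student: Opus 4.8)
The plan is to prove Theorem~\ref{thm.CGb}, the assertion that the map $\cC_{3,1}\to\cG_2$ sending $A\mapsto C(A)$ is a bijection. Since the statement attributes the result to \cite[Corollary 4.10]{H} and \cite[Remark 3.29]{HM}, I expect the proof to assemble already-available pieces rather than to construct everything from scratch; my job is to identify those pieces and check they fit. First I would unwind the definitions on both sides. An element of $\cC_{3,1}$ is an isomorphism class $A=S/(f)$ where $S$ is a $3$-dimensional graded Clifford algebra and $f\in Z(S)_2$ is a homogeneous regular central element; an element of $\cG_2$ is an isomorphism class of a $4$-dimensional commutative Frobenius algebra. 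The candidate inverse construction, going from a Frobenius algebra back to a conic, is what must be pinned down.

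Next I would factor the map through the quadratic-dual correspondence. By Theorem~\ref{thm.HMd} with $n=3$, $m=1$, the map $\cC_{3,1}\to\cB_{3,2}$, $A\mapsto A^!$, is a bijection, so it suffices to compose with a bijection $\cB_{3,2}\to\cG_2$ and check that the composite agrees with $A\mapsto C(A)$. An element of $\cB_{3,2}$ is a commutative pencil of conics $B=k[x_1,x_2,x_3]/(h_1,h_2)$ with $(h_1,h_2)$ a homogeneous regular sequence of degree $2$; by Theorem~\ref{thm.KoHi} and Koszulity this has Hilbert series $(1-t^2)^2/(1-t)^3=(1+t)^2(1-t)$, hence $\dim_k B=4$. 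The key claim is then that $B$ is naturally a $4$-dimensional commutative Frobenius algebra and that, under the identification $B\cong A^!$, the localization $C(A)=A^![{f^!}^{-1}]_0$ recovers exactly this degree-$0$ Frobenius algebra. Concretely I would verify that $A^!$ is itself a Frobenius graded algebra with $f^!\in Z(A^!)_2$ a regular central socle-generating element (Lemma~\ref{lem.asf} supplies $f^!$ and its centrality), so that inverting $f^!$ and taking degree $0$ produces a finite-dimensional Frobenius algebra; the $2$-periodicity forced by multiplication by the degree-$2$ unit $f^!$ in the localization is what makes $C(A)$ four-dimensional and identifies it with $B=A^!/(f^!)$ up to the appropriate rescaling.

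The step I expect to be the genuine obstacle is surjectivity, that is, showing every $4$-dimensional commutative Frobenius algebra arises as some $C(A)$. Injectivity should follow formally: if $C(A)\cong C(A')$ then, tracing back through the isomorphism $C(A)\cong A^!/(f^!)$ and the bijection of Theorem~\ref{thm.HMd}, one recovers $A\cong A'$, since each intermediate arrow is invertible. For surjectivity one must start from an arbitrary $R\in\cG_2$ and exhibit a graded Clifford algebra $S$ together with $f\in Z(S)_2$ realizing $R\cong C(S/(f))$; the natural route is to reverse the chain, using the bijection $\cG_2\cong\cB_{3,2}$ (realizing $R$ as the degree-$0$ part of a graded commutative Gorenstein algebra obtained by ``homogenizing'' $R$ against a degree-$2$ variable) and then applying the inverse of Theorem~\ref{thm.HMd} to produce $A\in\cC_{3,1}$. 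The delicate point here is verifying that the homogenization of a $4$-dimensional commutative Frobenius algebra lands precisely in $\cB_{3,2}$, i.e.\ that one obtains a genuine regular sequence of two degree-$2$ forms in three commuting variables with the correct Hilbert series, and that this is compatible with the localization defining $C(A)$. Once that compatibility is established, the bijection follows by composing the two known bijections and checking the composite equals $A\mapsto C(A)$ on representatives.
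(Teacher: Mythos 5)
The paper itself offers no proof of Theorem~\ref{thm.CGb}: it is imported wholesale from \cite[Corollary 4.10]{H} and \cite[Remark 3.29]{HM}, and is then used in Section~\ref{sec-main} (together with Lemma~\ref{lem.3} and Theorems~\ref{thm.1}, \ref{thm.2}) to deduce $\cB_{2,2}^\vee=\cG_2$. Your proposal therefore has to stand on its own, and it does not, because its central identification is false. For $B=k[x_1,x_2,x_3]/(h_1,h_2)\in\cB_{3,2}$ the Hilbert series is
\[
H_B(t)=\frac{(1-t^2)^2}{(1-t)^3}=\frac{(1+t)^2}{1-t}=1+3t+4t^2+4t^3+\cdots,
\]
not $(1+t)^2(1-t)$; such a $B$ has Krull dimension $1$ and is \emph{infinite}-dimensional over $k$, so your ``key claim'' that $B\cong A^!$ is itself a $4$-dimensional Frobenius algebra cannot be repaired. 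Relatedly, you conflate the localization $C(A)=A^![{f^!}^{-1}]_0$ with the quotient $A^!/(f^!)$: by Lemma~\ref{lem.asf} that quotient is $S^!$, which is $8$-dimensional (Hilbert series $(1+t)^3$ by Theorem~\ref{thm.qpak}) and is not $C(A)$. The correct bridge from $\cB_{3,2}$ to $4$-dimensional algebras is dehomogenization, i.e.\ the degree-zero part of the localization at a \emph{regular degree-one} element: $C(A)\cong\sD(A^!)=A^![z^{-1}]_0$ (Lemmas~\ref{lem.wbwd}, \ref{lem.2}, \ref{lem.3}), and $\sD$ carries $\cB_{3,2}$ into $\cB_{2,2}^\vee$ (Theorem~\ref{thm.1}). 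Geometrically one passes to an affine chart of the pencil; this is a colimit of the spaces $A^!_{2n}(f^!)^{-n}$, whose dimensions stabilize at $4$, not a quotient of $A^!$.

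Once this identification collapses, both halves of your bijection argument go with it. Injectivity is not ``formal'': with the correct description the map $\cB_{3,2}\to\cG_2$ is $B\mapsto B[z^{-1}]_0$, and recovering a graded algebra from a single affine piece of its Proj is not automatic. Surjectivity is where the real mathematical weight sits, and your sketch presupposes exactly what must be proved: that every $4$-dimensional commutative Frobenius algebra can be presented as $k[x,y]/(f,g)$ with $(f,g)$ a strongly regular (normal) sequence of degree $2$, so that homogenization (with a degree-one variable $z$, not a degree-two one as you suggest) lands in $\cB_{3,2}$ via Theorem~\ref{thm.2}. That presentation statement is essentially the classification of $\cG_2$ in \cite[Corollary 4.10]{H} (the six algebras in the $\cG_2$ row of Table~\ref{tab-4dimfro}); without it, or a substitute argument, your chain $\cG_2\to\cB_{2,2}^\vee\to\cB_{3,2}\to\cC_{3,1}$ cannot even start. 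In short, your skeleton --- factoring through Theorem~\ref{thm.HMd} and a correspondence between $\cB_{3,2}$ and $\cG_2$ --- is consistent with the framework the paper builds around this theorem, but the two identifications you rely on are wrong, and the one step carrying actual content is assumed rather than proved.
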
 

\section{Noncommutative affine pencils of conics} \label{sec-napc}

In this section, we define and classify noncommutative affine pencils of conics.

\subsection{$st$-equivalences}

For the purpose of the subsequent paper \cite{HMW}, we classify noncommutative affine pencils of conics up to $st$-equivalence defined below.

\begin{definition} 
Let $S$ be a graded algebra, and $F = (f_1, \dots, f_m), F' = (f_1', \dots, f_m') $ sequences in $S$.
\begin{enumerate}
\item{} We write $F \sim _s F'$ if there exists $(\a_{ij})\in \GL_m(k)$ such that $f'_j=\sum _{i=1}^m\a_{ij}f_i$ for every $i=1, \dots, m$.  In the matrix notation, $(f_1', \dots, f_m')=(f_1, \dots, f_m)(\a_{ij})$.  
\item{} We write $F \sim _t F'$ if there exists $\phi\in \Aut^{\ZZ}S$  such that $f'_i=\phi(f_i)$ for every $i=1, \dots, m$.   
\item{} We write $F \sim _{st} F'$ if there exists a sequence $F''$ in $S$ such that  $F \sim_t F'' \sim _s F'.$ 
\end{enumerate}
\end{definition} 

\begin{remark} \label{rem.stc} 
It is clear that $F\sim_{st} F'$ implies $S/I_F\cong S/I_{F'}$, but the converse does not hold in general (see Example \ref{ex.stc}).   
\end{remark} 

\begin{remark} \label{rem.isst} 
Let $S=k[x_1, \dots, x_n]$, and  $H = (h_1, \dots, h_m), H' = (h_1', \dots, h_m') $ homogeneous sequences of degree 2 in $S$.   Since $\Aut^{\ZZ}S=\GL_n(k)$, the above definition of $st$-equivalence is essentially the same as the one given in \cite[Definition 2.17]{HM}.  In this case, $H\sim _{st}H'$ if and only if  $S/I_H\cong S/I_{H'}$ as graded algebras. 
\end{remark} 

\begin{lemma}
For every fixed $m\in\mathbb{N}$, $\sim_s$, $\sim_t$ and $\sim_{st}$ are equivalence relations on the set of sequences 
$F=(f_1,\cdots, f_m)$ in $S$.
\end{lemma}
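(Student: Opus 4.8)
The plan is to verify the three defining properties of an equivalence relation---reflexivity, symmetry, and transitivity---for each of $\sim_s$, $\sim_t$, and $\sim_{st}$ in turn. For $\sim_s$ and $\sim_t$ this reduces immediately to the group axioms for $\GL_m(k)$ and $\Aut^{\ZZ}S$ respectively, so these two cases are routine and I would dispatch them first. For $\sim_s$: reflexivity follows from taking $(\a_{ij})$ to be the identity matrix; symmetry follows because if $(f_1',\dots,f_m')=(f_1,\dots,f_m)(\a_{ij})$ with $(\a_{ij})\in\GL_m(k)$ invertible, then $(f_1,\dots,f_m)=(f_1',\dots,f_m')(\a_{ij})^{-1}$ and $(\a_{ij})^{-1}\in\GL_m(k)$; transitivity follows from the product of the two transformation matrices, using that $\GL_m(k)$ is closed under multiplication. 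For $\sim_t$: reflexivity uses $\id\in\Aut^{\ZZ}S$, symmetry uses $\phi^{-1}\in\Aut^{\ZZ}S$, and transitivity uses the composite $\psi\circ\phi\in\Aut^{\ZZ}S$ applied componentwise.

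The substantive case is $\sim_{st}$, where the main obstacle is transitivity, since $\sim_{st}$ is defined as the relation obtained by composing $\sim_t$ followed by $\sim_s$, and such a composite of two equivalence relations is not automatically an equivalence relation. Reflexivity is immediate: $F\sim_t F\sim_s F$ gives $F\sim_{st}F$. Symmetry and transitivity both hinge on a single commutation-type lemma, namely that the relations $\sim_s$ and $\sim_t$ \emph{commute} as relations: if $F\sim_s G\sim_t H$, then there exists $G'$ with $F\sim_t G'\sim_s H$. To see this, suppose $(g_1,\dots,g_m)=(f_1,\dots,f_m)(\a_{ij})$ and $h_i=\phi(g_i)$ for some $\phi\in\Aut^{\ZZ}S$. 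Since $\phi$ is $k$-linear, applying $\phi$ to $(g_1,\dots,g_m)=(f_1,\dots,f_m)(\a_{ij})$ yields $(h_1,\dots,h_m)=(\phi(f_1),\dots,\phi(f_m))(\a_{ij})$. Setting $g_i':=\phi(f_i)$ gives $F\sim_t G'\sim_s H$, as required. In other words, one can always swap the order of an $s$-move and a $t$-move.

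Granting this swap lemma, the remaining properties of $\sim_{st}$ follow formally. For symmetry: if $F\sim_{st}F'$, then $F\sim_t F''\sim_s F'$ for some $F''$; applying symmetry of $\sim_s$ and $\sim_t$ gives $F'\sim_s F''\sim_t F$, and then the swap lemma converts the $s$-then-$t$ pattern into a $t$-then-$s$ pattern, yielding $F'\sim_{st}F$. For transitivity: if $F\sim_{st}F'$ and $F'\sim_{st}F''$, then $F\sim_t A\sim_s F'\sim_t B\sim_s F''$ for suitable sequences $A,B$; the middle portion $A\sim_s F'\sim_t B$ has the $s$-then-$t$ shape, so the swap lemma produces $C$ with $A\sim_t C\sim_s B$, giving the chain $F\sim_t A\sim_t C\sim_s B\sim_s F''$, which collapses via transitivity of $\sim_t$ (on $F\sim_t A\sim_t C$) and transitivity of $\sim_s$ (on $C\sim_s B\sim_s F''$) into $F\sim_t C\sim_s F''$, that is, $F\sim_{st}F''$. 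Thus the entire proof rests on establishing the swap lemma, whose proof is the one-line linearity computation above; everything else is formal manipulation of the two underlying group actions.
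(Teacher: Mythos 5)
Your proof is correct and follows essentially the same route as the paper: the heart of both arguments is the linearity computation $\sum_i\a_{ij}\phi(f_i)=\phi\bigl(\sum_i\a_{ij}f_i\bigr)$, which lets one swap an $s$-move past a $t$-move. The only difference is organizational — you isolate this as an explicit ``swap lemma'' and deduce symmetry and transitivity of $\sim_{st}$ from it, whereas the paper performs the same computation inline to prove symmetry and then bootstraps transitivity from symmetry.
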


\begin{proof}
The relations $\sim_s$ and $\sim_t$ are obviously equivalence relations, and clearly $F\sim_{st}F$. 

If $F = (f_1,\dots, f_m), F' = (f'_1,\dots, f'_m)$ are sequences in $S$ such that $F\sim_{st}F'$, then there exist $(\a_{ij})\in \GL_m(k)$ and $\phi\in \Aut^{\ZZ}S$ such that 
$$
F'= \left(\sum_{i=1}^m\alpha_{i1}\phi(f_i),\cdots, \sum_{i=1}^m\alpha_{im}\phi(f_i)\right)=\left (\phi\left(\sum_{i=1}^m\alpha_{i1}f_i\right),\cdots,\phi\left (\sum_{i=1}^m\alpha_{im}f_i\right )\right) , 
$$
so $F \sim_{s} F'' \sim_t F'$ where $F''=\left(\sum_{i=1}^m\alpha_{i1}f_i,\cdots, \sum_{i=1}^m\alpha_{im}f_i\right)$, hence $F'\sim_{st}F$.  

If $F_1\sim_{st}F_2\sim_{st}F_3$, then there exist $F_4, F_5$ such that $F_1\sim_{t}F_4\sim _sF_2\sim_tF_5\sim_sF_3$.  Since $F_5\sim _{st}F_4$, then $F_4\sim _{st}F_5$ by the above argument,  so there exists $F_6$ such that  $F_1\sim_{t}F_4\sim _tF_6\sim_sF_5\sim_sF_3$, hence $F_1\sim_{st}F_3$.  
% The result now follows. 
\end{proof}

\begin{definition} Let $S$ be a graded algebra, and $H=(h_1, \dots, h_l)$ a homogeneous sequence of degree $d$ in $S$.   For a sequence $G=(g_1, \dots, g_m)$ in $S$, we write a sequence $\bar G:=(\bar g_1, \dots, \bar g_m)$  in $S/I_H$.  
For sequences $F=(f_1, \dots, f_m), F'=(f_1', \dots, f_m')$ of degree $d$ in $S/I_H$,  we define $(F, H)\sim_{st}(F', H)$ in $S$ if there exist sequences $G, G'$ of degree $d$ in $S$ such that $\bar G=F, \bar {G'}=F'$ and that $(G, H)\sim_{st}(G', H)$ in $S$.  
\end{definition} 

\begin{lemma} \label{lem.stSA}  Let $H=(h_1, \dots, h_l)$ be a homogeneous sequence of degree $d$ in $k\<x_1, \dots, x_n\>$, and  $F=(f_1, \dots, f_m), F'=(f_1', \dots, f_m')$ sequences of degree $d$ in $S:=k\<x_1, \dots, x_n\>/I_H$.  If $F\sim _{st}F'$ in $S$, then $(F, H)\sim _{st}(F', H)$ in $k\<x_1, \dots, x_n\>$.  
\end{lemma}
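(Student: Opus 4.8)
The plan is to unpack the relation $F\sim_{st}F'$ in $S$, lift all the data to the free algebra $R:=k\<x_1,\dots,x_n\>$, and then reassemble a single length-$(m+l)$ $st$-equivalence between the concatenated sequences. Write $\pi\colon R\to S=R/I_H$ for the quotient map. By definition of $\sim_{st}$ there is a sequence $F''$ with $F\sim_t F''\sim_s F'$, say $f''_i=\psi(f_i)$ for some $\psi\in\Aut^{\ZZ}S$ and $f'_j=\sum_{i}\a_{ij}f''_i$ for some $(\a_{ij})\in\GL_m(k)$. Since $S$ is generated in degree $1$ and (for $d\ge 2$, as in our application) $(I_H)_1=0$, we have $S_1=R_1$, so $\psi$ is determined by the invertible linear map $\psi|_{S_1}$ on $R_1$. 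Because $R$ is free, the assignment $x_i\mapsto\psi(x_i)\in S_1=R_1$ extends to a graded automorphism $\Psi\in\Aut^{\ZZ}R$; for general $d$ one first lifts $\psi|_{S_1}$ to an invertible map of $R_1$ preserving $(I_H)_1$, and the rest is unchanged.

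First I would check that $\Psi$ descends, i.e. $\Psi(I_H)=I_H$. The two algebra maps $\pi\Psi$ and $\psi\pi$ from $R$ to $S$ agree on each generator $x_i$, hence coincide; therefore $\pi\Psi(h)=\psi\pi(h)=0$ for $h\in I_H$, giving $\Psi(I_H)\subseteq I_H$, and applying the same to the lift of $\psi^{-1}$ (which is $\Psi^{-1}$) yields equality. Next, choose any lift $G=(g_1,\dots,g_m)$ of $F$ to $R_d$ and set $g'_j:=\sum_i\a_{ij}\Psi(g_i)$. Since $\pi\Psi(g_i)=\psi(f_i)=f''_i$, the sequence $G$ lifts $F$ and $G':=(g'_j)$ lifts $F'$. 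It then suffices to produce an $M\in\GL_{m+l}(k)$ with $(G',H)=\Psi(G,H)\cdot M$, for this exhibits $(G,H)\sim_t\Psi(G,H)\sim_s(G',H)$, hence $(F,H)\sim_{st}(F',H)$ in $R$.

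The $G$-block of $M$ is handled by the scalars $\a_{ij}$ above. The main obstacle is the $H$-block: I must recover each $h_j$ as a \emph{scalar} combination of $\Psi(h_1),\dots,\Psi(h_l)$ via an invertible matrix. Here the degree constraint is decisive. Every degree-$d$ element of $I_H$ lies in $W:=\operatorname{span}_k\{h_1,\dots,h_l\}$, because $R$ is connected graded and the $h_k$ have degree exactly $d$; thus the graded automorphism $\Psi$ restricts to a linear automorphism $\Psi|_W$ of $W$. Consequently $h_j=\sum_k\gamma_{kj}\Psi(h_k)$ for suitable scalars, and one can arrange the $l\times l$ matrix $D:=(\gamma_{kj})$ to be invertible: if the $h_k$ are linearly independent this is simply the change-of-basis matrix of $(\Psi|_W)^{-1}$, and in the general (dependent) case one lifts $(\Psi|_W)^{-1}$ along the surjection $k^l\twoheadrightarrow W$ after splitting off the space of linear relations, taking $D$ to be the identity there. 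Setting $M=\begin{pmatrix}(\a_{ij})&0\\0&D\end{pmatrix}\in\GL_{m+l}(k)$ then gives $(G',H)=\Psi(G,H)\cdot M$ by a direct block computation, completing the argument.
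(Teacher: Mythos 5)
Your proof is correct and takes essentially the same approach as the paper: lift the graded automorphism of $S$ to a graded automorphism of $k\<x_1,\dots,x_n\>$, use that the degree-$d$ part of $I_H$ is $\operatorname{span}_k\{h_1,\dots,h_l\}$, and realize the equivalence by an explicit block matrix in $\GL_{m+l}(k)$. You merely streamline the paper's argument (which treats $\sim_s$ and $\sim_t$ separately, uses arbitrary lifts $G,G'$, and gets block-triangular matrices with a correction block $(\c_{ij})$) by choosing the lift $G'$ so that the matrix becomes block diagonal, and you additionally justify points the paper asserts without proof, namely the existence of the lift $\Psi$ with $\Psi(I_H)=I_H$ and the invertibility of the $H$-block even when the $h_i$ are linearly dependent.
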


\begin{proof}  Let $G, G'$ be sequences of degree $d$ in $k\<x_1, \dots, x_n\>$ such that $\bar G=F, \bar {G'}=F'$.  

If $F\sim _{s}F'$ in $S$, then there exists $(\a_{ij})\in \GL_m(k)$ such that $\overline {g'_j}=f'_j=\sum_i\a_{ij}f_i=\sum_i\a_{ij}\overline{g_i}$ for $j=1, \dots, m$. Hence $g'_j=\sum_i\a_{ij}g_i+\sum_{i}\c_{ij}h_i$ for some $\c_{ij}\in k$.  Since $\begin{pmatrix} (\a_{ij}) & 0 \\ (\c_{ij}) & E_l \end{pmatrix}\in \GL_{m+l}(k)$, we have $(G, H)\sim _{s}(G', H)$ in $k\<x_1, \dots, x_n\>$. In the matrix notation, 
$$(g_1', \dots, g_m', h_1, \dots, h_l)=(g_1, \dots, g_m, h_1, \dots, h_l)\begin{pmatrix} (\a_{ij}) & 0 \\ (\c_{ij}) & E_l \end{pmatrix}.$$

If $F\sim _{t}F'$ in $S$, then there exists $\bar \phi\in \Aut^{\ZZ} S$ where $\phi \in \Aut^{\ZZ}k\<x_1, \dots, x_n\>=\GL_n(k)$ such that $f'_j=\bar \phi(f_j)$ for $j=1, \dots, m$. Hence, $\overline {g'_j}=f'_j=\bar\phi(f_j)=\bar \phi(\bar {g_j})=\overline {\phi(g_j)}$, and $g_j'=\phi(g_j)+\sum_{r}\c_{rj}h_r$ for some $\c_{rj}\in k$.  Since  $h_r=\sum_{i}\b_{ir}\phi(h_i)$ for some $(\b_{ij})\in \GL_l(k)$, we have
$$g_j'=\phi(g_j)+\sum_{r}\c_{rj}\left(\sum_{i}\b_{ir}\phi(h_i)\right)=\phi(g_j)+\sum_{i}\sum_{r}\b_{ir}\c_{rj}\phi(h_i).$$ 
Since $\begin{pmatrix} E_m & 0 \\  (\b_{ij})(\c_{ij}) & (\b_{ij}) \end{pmatrix}\in \GL_{m+l}(k)$, we have $(G, H)\sim_{st}(G', H)$ in $k\<x_1, \dots, x_n\>$.  In the matrix notation, 
$$(g_1', \dots, g_m', h_1, \dots, h_l)=(\phi(g_1), \dots, \phi(g_m), \phi(h_1), \dots, \phi(h_l))\begin{pmatrix} E_m & 0 \\ (\b_{ij})(\c_{ij}) & (\b_{ij})  \end{pmatrix}.$$
\end{proof}

\begin{definition} Let $S$ be a graded algebra.  For $f\in S$ such that $\deg f=d$, we write $f^\vee:=f_d$.  
We say that a sequence $F=(f_1, \dots, f_m)$ in $S$ is a {\it strongly regular normal sequence} if $F$ is a regular normal sequence in $S$ and $F^\vee:= (f_1^\vee, \dots, f_m^\vee)$ is also a (homogeneous) regular normal sequence in $S$.  
\end{definition} 

\begin{remark} \label{rem.3} 
Let $f, g\in k[x_1, \dots, x_n]\setminus k$. 
\begin{enumerate}
\item{}  It is known that $\gcd(f, g)=1$ if and only if  $(f, g)$ is a regular sequence, so $(f, g)$ is a regular sequence if and only if $(g, f)$ is a regular sequence.  
% This is not the case in the noncommutative setting (see Example \ref{ex.ex} (1)).
% For $f, g\in S=k[x_1, \dots, x_n]$, 
\item{} If $(f^\vee, g^\vee)$ is a regular (normal) sequence, then $\gcd (f^\vee, g^\vee)=1$, so $\gcd(f, g)=1$, hence $(f, g)$ is automatically a strongly regular (normal)  sequence. This is not the case in the noncommutative setting (see Example \ref{ex.ex} (1)). 
\end{enumerate}
\end{remark} 

\begin{definition} 
We define
% \begin{gather*}
\begin{align*}
\cA_{n, m}^\vee & : = \{S/(f_1, \dots, f_m) \mid \text{ $S$ is an $n$-dimensional quantum polynomial algebra and } \\
& \qquad \text  {$F=(f_1, \dots, f_m)$ is a strongly regular normal sequence of degree 2 in $S$}\}/\cong, \\
\cB_{n, m}^\vee & :=\{S/(f_1, \dots, f_m)\in \cA_{n, m}^\vee\mid S=k[x_1, \dots, x_n]\}/\cong, \\ 
\cA_{n, m}^{\vee, st} & : = \{(F, H) \text{ is a linearly independent sequence of degree 2 in $k\<x_1, \dots, x_n\>$} \mid \\
& \qquad \qquad \qquad S=k\<x_1, \dots, x_n\>/I_H\in \cA_{n, 0} \text{ and } S/I_{\bar F}\in \cA_{n,m}^\vee\}/\sim_{st} \\
\cB_{n, m}^{\vee, st} & : = \{(F, H)\in \cA_{n, m}^{\vee, st} \mid  S=k\<x_1, \dots, x_n\>/I_H\cong k[x_1, \dots, x_n]\}/\sim_{st}. 
\end{align*}
An algebra $E$ is called a {\it noncommutative affine pencil of conics} if $E\in \cA_{2,2}^\vee$, and  is called an affine pencil of conics if $E\in \cB_{2,2}^\vee$. 
\end{definition} 

Clearly, we have natural surjections $\cA_{n, m}^{\vee, st}\to \cA_{n, m}^\vee, \cB_{n, m}^{\vee, st}\to \cB_{n, m}^\vee, (\cA_{2, 2}^{\vee, st}\setminus \cB_{2, 2}^{\vee, st})\to  (\cA_{2, 2}^{\vee}\setminus \cB_{2, 2}^{\vee})$.

\subsection{Normalizing automorphisms} 

In order to classify all noncommutative affine pencils of conics, we need to find all strongly regular normal sequences $(f,g)$ of degree 2 for each 2-dimensional quantum polynomial algebra $S \in \cA_{2,0}$ as classified in Example \ref{exm-2dimqpa2}.  
% In particular, it is essential to find all regular normal elements for a given algebra, however, i
It is not easy to determine if a given element is regular and/or normal in general, however, it is relatively easier to determine if a given ``homogeneous'' element is regular and/or normal.  For example, the following lemma applies to only homogeneous sequences.

\begin{lemma} \label{lem.Tak}  
Let $A$ be a locally finite $\NN$-graded algebra, and $F = (f_1, \dots, f_m)$ a homogeneous normal sequence.
Then $F$ is regular if and only if 
$$
H_{A/I_F}(t)=(1-t^{d_1})\cdots (1-t^{d_m})H_A(t)
$$
where $d_i = \deg f_i$ for $i = 1, \dots, m$. 
\end{lemma}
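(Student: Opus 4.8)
The plan is to reduce to a single homogeneous normal element and then assemble the pieces into a monotone chain of Hilbert series. First I would treat the case of one element. Let $B$ be a locally finite $\NN$-graded algebra and $f\in B$ a homogeneous normal element of degree $d$. Normality gives $fB=Bf$, and this common ideal is exactly the two-sided ideal $I_f$; consequently left multiplication by $f$ is a degree-preserving homomorphism of graded right $B$-modules $\ell_f\colon B(-d)\to B$ with image $I_f$ and kernel the graded subspace $K:=\{b\in B: fb=0\}$. This produces the exact sequence
$$
0\to K\to B(-d)\xrightarrow{\ \ell_f\ } B\to B/I_f\to 0,
$$
and since $B$ is locally finite the alternating sum of Hilbert series vanishes, yielding
$$
H_{B/I_f}(t)=(1-t^d)H_B(t)+H_K(t).\qquad (\ast)
$$

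Next I would identify when $f$ is regular. Plainly $K=0$ is equivalent to $f$ being left regular. Running the symmetric computation with right multiplication $r_f\colon B(-d)\to B$ (whose image is again $I_f=Bf$) gives the identity $(\ast)$ with $K$ replaced by $K':=\{b: bf=0\}$, so $H_K=H_{K'}$ and hence $K=0\iff K'=0$; thus for a homogeneous normal element left and right regularity coincide, and $f$ is regular if and only if $K=0$, i.e.\ $H_K=0$. Since the inverse series $(1-t^d)^{-1}=\sum_{j\ge 0}t^{jd}$ has nonnegative coefficients, multiplying $(\ast)$ by it gives the coefficientwise inequality
$$
(1-t^d)^{-1}H_{B/I_f}(t)=H_B(t)+(1-t^d)^{-1}H_K(t)\ \ge\ H_B(t),
$$
with equality if and only if $f$ is regular.

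For the general statement, set $A_0:=A$ and $A_i:=A_{i-1}/(\bar f_i)$, so that each $\bar f_i$ is a homogeneous normal element of the locally finite graded algebra $A_{i-1}$, $A_m=A/I_F$, and $K_i:=\{b\in A_{i-1}: \bar f_i\,b=0\}$ controls its regularity as above. Put $Y_i:=\big(\prod_{j=1}^{i}(1-t^{d_j})^{-1}\big)H_{A_i}$, with $Y_0=H_A$. Applying the inequality of the previous paragraph to $\bar f_i\in A_{i-1}$ and multiplying by the nonnegative series $\prod_{j<i}(1-t^{d_j})^{-1}$ gives $Y_i\ge Y_{i-1}$ coefficientwise, with equality precisely when $\bar f_i$ is regular in $A_{i-1}$. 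Hence
$$
H_A=Y_0\le Y_1\le\cdots\le Y_m=\Big(\prod_{j=1}^{m}(1-t^{d_j})^{-1}\Big)H_{A/I_F}.
$$
Now $F$ is regular if and only if every $\bar f_i$ is regular, if and only if every step of the chain is an equality, if and only if $Y_m=Y_0$; and $Y_m=Y_0$ is exactly the asserted identity $H_{A/I_F}(t)=\prod_{i=1}^m(1-t^{d_i})H_A(t)$. For the delicate implication I would use that a monotone chain with equal endpoints is constant, so every consecutive difference $Y_i-Y_{i-1}=\big(\prod_{j\le i}(1-t^{d_j})^{-1}\big)H_{K_i}$ vanishes; as $\prod_j(1-t^{d_j})^{-1}$ is a unit in $\ZZ[[t]]$, this forces $H_{K_i}=0$, whence $\bar f_i$ is regular.

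I expect the converse direction to be the main obstacle, and it is worth saying why the chain is needed. Telescoping the identities $(\ast)$ directly would write $H_{A/I_F}-\prod_i(1-t^{d_i})H_A$ as $\sum_i\big(\prod_{j>i}(1-t^{d_j})\big)H_{K_i}$, but the factors $\prod_{j>i}(1-t^{d_j})$ carry negative coefficients, so the vanishing of this sum does not obviously force each $H_{K_i}=0$. Passing to the inverse series $\prod_j(1-t^{d_j})^{-1}$, which has nonnegative coefficients, converts the bookkeeping into a genuinely monotone chain in which equality of the two endpoints propagates to every step. The second point to watch is that normality is used essentially in two places: to identify $fB=Bf=I_f$ (so that $B/I_f$ is the cokernel of $\ell_f$) and to make left and right regularity coincide via $H_K=H_{K'}$.
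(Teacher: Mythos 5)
Your proof is correct, and it is worth noting that the paper itself gives no argument for this lemma: its ``proof'' is a citation to \cite[Lemma 2.28]{HM} together with the remark that the same proof works when normality of all the $f_i$ in $A$ is weakened to sequential normality (each $\bar f_i$ normal in $A/(f_1,\dots,f_{i-1})$). Your argument is formulated from the outset for the sequential version, so it proves the stated lemma directly and self-containedly. The three ingredients you use are exactly what is needed: the four-term exact sequence $0\to K\to B(-d)\to B\to B/I_f\to 0$ coming from left multiplication by a homogeneous normal element (where normality is what identifies the image $fB$ with the two-sided ideal $I_f$); the comparison with the right-multiplication sequence, which gives $H_K=H_{K'}$ and hence that left and right regularity coincide for homogeneous normal elements, so regularity of each $\bar f_i$ can be tested by $K_i$ alone; and the passage to $Y_i=\bigl(\prod_{j\le i}(1-t^{d_j})^{-1}\bigr)H_{A_i}$, which turns the induction into a monotone chain so that equality of the endpoints propagates backwards. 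Your remark about why naive telescoping fails (the cofactors $\prod_{j>i}(1-t^{d_j})$ have negative coefficients) correctly isolates the only delicate point of the converse direction. One small caveat: your argument needs each $d_i\ge 1$ so that $(1-t^{d_i})^{-1}$ exists in $\ZZ[[t]]$. This is harmless, since the statement itself implicitly requires positive degrees --- for degree-zero elements the lemma is actually false (e.g. $F=((1,0),(0,1))$ in $A=k\times k$ concentrated in degree $0$ is a homogeneous normal, non-regular sequence with $H_{A/I_F}=0=(1-t^0)^2H_A(t)$) --- and it matches the paper's standing convention that regular normal elements are non-units.
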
 

\begin{proof} 
This was proved under the assumption that $f_1, \dots, f_m\in A$ are normal in \cite[Lemma 2.28]{HM}, but the same proof works without the assumption. 
\end{proof} 

\begin{remark} \label{rem.2} 
Unlike \cite[Lemma 2.28]{HM}, a homogeneous regular normal sequence is not preserved by permutations (see Example \ref{ex.ex} (2)).   
\end{remark} 

Let $R$ be an algebra. Then $f\in R$ is a regular normal element if and only if there exists a unique algebra automorphism $\nu\in \Aut R$ such that $gf=f\nu(g)$ for $g\in R$.  We call $\nu$ the {\it normalizing automorphism} of $f$.  A regular normal element $f\in R$ is central if and only if $\nu=\id$.  If $A$ is a  graded algebra and $f\in A$ is a homogeneous regular normal element, then $\nu\in \Aut^{\ZZ}A$ is a graded algebra automorphism.   The normalizing automorphism is very useful to find regular normal elements.  We prepare some lemmas below. 

\begin{lemma} \label{lem.reno} 
Let $R$ be an algebra. 
\begin{enumerate}
\item{} 
If there exists a surjective map $\nu: R \to R$ such that $gf=f\nu (g)$ for $g\in R$, then $f\in R$ is a normal element.  In particular, if $R = k\<x_1, \dots, x_n\>/I$ and 
if there exists $(a_{ij})\in \GL_n(k)$ such that $x_if=f\left(\sum_{j=1}^na_{ij}x_j\right)$, then $f$ is normal.
\item{} If $f, g\in R$ are regular normal elements with the same normalizing automorphism, 
then $\a f+\b g\in R$ is a normal element for every $\a, \b\in k$.
\end{enumerate}
\end{lemma}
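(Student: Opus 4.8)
\textbf{Proof plan for Lemma~\ref{lem.reno}.}

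For part (1), the plan is to verify the defining condition $Rf=fR$ of normality directly from the hypothesis. Given a surjective map $\nu\colon R\to R$ with $gf=f\nu(g)$ for all $g\in R$, any element of $Rf$ has the form $gf$, and the identity rewrites it as $f\nu(g)\in fR$, giving $Rf\subseteq fR$. For the reverse inclusion I would use surjectivity: an arbitrary element of $fR$ is $fh$ with $h\in R$, and choosing $g$ with $\nu(g)=h$ yields $fh=f\nu(g)=gf\in Rf$, so $fR\subseteq Rf$ and hence $Rf=fR$. For the ``in particular'' clause with $R=k\<x_1,\dots,x_n\>/I$, the point is that the single collection of relations $x_if=f\bigl(\sum_j a_{ij}x_j\bigr)$ with $(a_{ij})\in\GL_n(k)$ suffices to produce such a $\nu$: I would define $\nu$ on generators by $\nu(x_i)=\sum_j a_{ij}x_j$, extend it multiplicatively to the tensor algebra, and check it descends to $R$ and is surjective. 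Surjectivity is immediate since $(a_{ij})$ is invertible, so the $\nu(x_i)$ again span $V$. The relation $gf=f\nu(g)$ then extends from generators to all of $R$ by a routine induction on word length, using the generator relations to move $f$ past each variable in turn; this reduces the general case to part (1).

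For part (2), suppose $f,g$ are regular normal with a common normalizing automorphism $\nu$, so $hf=f\nu(h)$ and $hg=g\nu(h)$ for all $h\in R$. The plan is to compute, for fixed scalars $\a,\b\in k$,
\begin{align*}
h(\a f+\b g)=\a hf+\b hg=\a f\nu(h)+\b g\nu(h)=(\a f+\b g)\nu(h),
\end{align*}
so $\a f+\b g$ again satisfies the one-sided-to-other-side exchange relation with the \emph{same} surjective (indeed bijective) map $\nu$. By part (1) applied to the element $\a f+\b g$, this immediately shows $\a f+\b g$ is normal, completing the proof.

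I do not expect any genuine obstacle here; both parts are essentially a direct unwinding of definitions. The only point requiring a little care is the descent and well-definedness of $\nu$ on $R=k\<x_1,\dots,x_n\>/I$ in part (1): one must confirm that the map defined on generators respects the ideal $I$. Since the statement only asserts normality (not that $\nu$ is an algebra automorphism, nor that $f$ is regular), I would note that it is enough to produce a surjective $k$-linear or set map $\nu$ satisfying the exchange relation, and the hypothesis supplies exactly this on generators; the extension to words and the surjectivity are then formal. Note also that in part (2) the resulting $\nu$ is automatically the (unique) normalizing automorphism of $\a f+\b g$ whenever the latter is regular, but regularity of the linear combination is not claimed and need not be addressed.
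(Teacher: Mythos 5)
The paper states this lemma without proof (it is followed immediately by Remark \ref{rem.1}), so there is no argument of the authors to compare against; your proposal has to be judged on its own. Your proof of the first sentence of (1) is correct ($Rf\subseteq fR$ from the exchange relation, $fR\subseteq Rf$ from surjectivity of $\nu$), and so is (2): the common normalizing automorphism is in particular surjective, so the computation $h(\a f+\b g)=(\a f+\b g)\nu(h)$ together with the first sentence of (1) gives normality.

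The gap is in the ``in particular'' clause of (1). Your primary route is to extend $x_i\mapsto\sum_j a_{ij}x_j$ to an endomorphism $\tilde\nu$ of $T=k\<x_1,\dots,x_n\>$ and ``check it descends to $R=T/I$''. That descent means $\tilde\nu(I)\subseteq I$, and nothing in the hypotheses guarantees it: writing $\pi\colon T\to R$ for the projection, from $g\in I$ you get $0=\pi(g)f=f\,\pi(\tilde\nu(g))$, and to conclude $\tilde\nu(g)\in I$ you would have to cancel $f$ on the left --- i.e.\ you would need exactly the regularity that, as you note yourself, the lemma does not assume. (This matters for the paper: the lemma is invoked in Step 3 of Theorem \ref{thm.Tb1} and in Example \ref{ex.st4} on candidate elements whose regularity is only verified afterwards.) Your fallback --- ``a surjective $k$-linear or set map on $R$ suffices, and producing one is formal'' --- hits the same wall: any map of the form $\nu=\pi\circ\tilde\nu\circ s$ for a section $s$ of $\pi$ does satisfy the exchange relation, but its surjectivity is precisely what is not formal when $\tilde\nu(I)\not\subseteq I$. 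The repair is to give up on constructing any map on $R$ at all: prove by induction on word length that $\pi(g)f=f\,\pi(\tilde\nu(g))$ for every $g\in T$ (only the generator relations are used to move $f$ past one variable at a time), and then conclude directly that $Rf=\pi(T)f=f\,\pi(\tilde\nu(T))=f\,\pi(T)=fR$, using that $\tilde\nu$ is an automorphism of the free algebra $T$ because $(a_{ij})$ is invertible. This proves normality outright: invertibility of the matrix enters through $\tilde\nu(T)=T$, not through the existence of a surjective map $R\to R$.
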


\begin{remark} \label{rem.1} The converse of Lemma \ref{lem.reno} (2) does not hold in general.  In fact, it is possible that $f, g, f+g\in R$ are regular normal elements such that their normalizing automorphisms are all distinct  (see Example \ref{ex.ex} (3)).
\end{remark} 

\begin{lemma} \label{lem.nag1} 
Let $S$ be a graded algebra and $f=\sum f_i\in S$ a normal element where $f_i\in S_i$.  If there exists $\nu\in \Aut^{\ZZ}S$ such that $gf=f\nu(g)$ for every $g\in S$, then $gf_i=f_i\nu(g)$ for every $g\in S$. Therefore, $f_i$ is a normal element for every $i\in \ZZ$.  

\end{lemma}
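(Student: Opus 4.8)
The plan is to reduce everything to a single homogeneous-component comparison, exploiting the fact that $\nu$ is a \emph{graded} automorphism. Since both $g\mapsto gf_i$ and $g\mapsto f_i\nu(g)$ are additive in $g$, and every element of $S$ is a finite sum of its homogeneous components, it suffices to establish the identity $gf_i=f_i\nu(g)$ for homogeneous $g$; the general case then follows by summing over the homogeneous components of $g$.

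First I would fix a homogeneous element $g\in S_j$. Because $\nu\in\Aut^{\ZZ}S$ preserves the grading, $\nu(g)\in S_j$ as well. Now I expand the hypothesis $gf=f\nu(g)$ using $f=\sum_i f_i$ with $f_i\in S_i$:
$$
\sum_i gf_i \;=\; gf \;=\; f\nu(g)\;=\;\sum_i f_i\nu(g).
$$
The crucial bookkeeping step is that each summand $gf_i$ lies in $S_{i+j}$ (a product of elements of degrees $j$ and $i$), and each summand $f_i\nu(g)$ likewise lies in $S_{i+j}$. Hence both sides are already written as sums of their homogeneous components, indexed by $i+j$, and these components lie in distinct degrees as $i$ varies with $j$ fixed.

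Comparing the degree-$(i+j)$ component on the two sides then yields $gf_i=f_i\nu(g)$ for each $i$, for our fixed homogeneous $g$. Extending to arbitrary $g=\sum_j g_j$ by additivity gives $gf_i=f_i\nu(g)$ for all $g\in S$ and all $i\in\ZZ$. Finally, since $\nu$ is an automorphism it is in particular surjective, so Lemma \ref{lem.reno}(1) applies with $f_i$ in place of $f$ and shows that each $f_i$ is normal.

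The argument has no real obstacle: the entire content is the degree-matching in the displayed equation, and the only hypothesis that is doing work is that $\nu$ preserves degrees. Were $\nu$ not graded, $\nu(g)$ could spread across several degrees and the right-hand summands would no longer sit cleanly in degree $i+j$, so the component comparison would break down. The passage from the componentwise identity to the normality of each $f_i$ is then immediate from Lemma \ref{lem.reno}(1).
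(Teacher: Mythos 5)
Your proposal is correct and follows essentially the same route as the paper: fix homogeneous $g\in S_j$, use that $\nu$ preserves degrees so $\nu(g)\in S_j$, compare the degree-$(i+j)$ components of $gf=f\nu(g)$ to get $gf_i=f_i\nu(g)$, extend by additivity, and conclude normality of each $f_i$ from Lemma \ref{lem.reno}(1). The paper's proof is just a compressed version of this same component comparison, written as $g_jf_i=(g_jf)_{i+j}=(f\nu(g_j))_{i+j}=f_i\nu(g_j)$.
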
 

\begin{proof} 
Since $\nu\in \Aut^{\ZZ}S$, we have $\nu(g_j)\in S_j$ for every $g_j\in S_j$. Then,  
$g_jf_{i}=(g_jf)_{i+j}=(f\nu(g_j))_{i+j}=f_i\nu(g_j)$.  
It follows that $gf_i=f_i\nu(g)$ for every $g\in S$, so $f_i$ is a normal element by Lemma \ref{lem.reno} (1). 
\end{proof} 

The following lemma is useful to find a (regular) normal element $f$ of degree 2 in $S\in \cA_{2, 0}$.
\begin{lemma} \label{lem.noce1} 
Let $S=k\<x_1, \dots, x_n\>/I$ be a graded algebra, and $f=\sum f_j\in S$ a regular normal element with the normalizing automorphism $\nu\in \Aut S$ where $f_j\in S_j$.  If  $f^\vee $ is regular, then the following results hold. 
\begin{enumerate}
\item{} $\nu\in \Aut^{\ZZ}S$. 
\item{} $f_j$ is a normal element for every $j$.   
\item{} If $f_j$ is regular, then the normalizing automorphism of $f_j$ is also $\nu$.
\item{} $f\in Z(S)$ if and only if $f_j\in Z(S)$ for every $j$ such that $f_j$ is regular if and only if $f_j\in Z(S)$ for some $j$ such that $f_j$ is regular.
\end{enumerate}
\end{lemma}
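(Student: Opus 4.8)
The plan is to prove part (1) first and then deduce (2)--(4) formally from it. The crux for (1) is that, since $S=k\<x_1,\dots,x_n\>/I$ is generated in degree $1$, it suffices to show $\nu(S_1)\subseteq S_1$: once this is known, $\nu|_{S_1}$ is an injective endomorphism of the finite-dimensional space $S_1$, hence bijective, and since $\nu$ is an algebra automorphism we get $\nu(S_e)=\nu(S_1\cdots S_1)=S_1\cdots S_1\subseteq S_e$ for every $e$, so $\nu\in\Aut^{\ZZ}S$.

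To show $\nu(S_1)\subseteq S_1$ I would run a two-ended degree estimate on the defining relation $gf=f\nu(g)$ for a nonzero $g\in S_1$. Set $d=\deg f$ (so $f^\vee=f_d$) and write $\nu(g)=\sum_m n_m$ with $n_m\in S_m$ and $M:=\max\{m:n_m\neq 0\}$. For the top end, right regularity of $f_d$ makes $gf_d\neq 0$ the top component of $gf$, in degree $d+1$, while left regularity of $f_d$ makes $f_d n_M\neq 0$ the top component of $f\nu(g)$, in degree $d+M$; comparing gives $M=1$, so $\nu(g)=n_1+n_0$ with $n_1\in S_1$ and $n_0\in k$. (This is the only place the hypothesis that $f^\vee$ is regular enters.) For the bottom end, let $a$ be least with $f_a\neq 0$, so that $f_{a-1}=0$; expanding $f\nu(g)=fn_1+n_0f$ and comparing degree-$a$ components gives $gf_{a-1}=f_{a-1}n_1+n_0f_a$, i.e. $0=n_0f_a$, whence $n_0=0$ since $f_a\neq 0$ and $n_0$ is a scalar. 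Thus $\nu(g)=n_1\in S_1$, proving $\nu(S_1)\subseteq S_1$ and hence (1).

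Granting (1), the remaining parts are short. Part (2) is immediate from \lemref{lem.nag1}, which (now that $\nu\in\Aut^{\ZZ}S$) yields $gf_j=f_j\nu(g)$ for every $g$ and hence that each $f_j$ is normal. For (3), if $f_j$ is moreover regular then it is regular normal, and the identity $gf_j=f_j\nu(g)$ exhibits $\nu$ as a normalizing automorphism of $f_j$; by the uniqueness of the normalizing automorphism it equals the normalizing automorphism of $f_j$. For (4), recall that a regular normal element is central precisely when its normalizing automorphism is $\id$, so $f\in Z(S)$ iff $\nu=\id$; by (3) this holds iff every regular $f_j$ has normalizing automorphism $\id$, i.e. is central. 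Since $f^\vee=f_d$ is itself a regular component, the set of regular $f_j$ is nonempty, so the ``for every'' and ``for some'' formulations agree, closing the chain of equivalences.

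The main obstacle is part (1): a priori $\nu(g)$ could carry many lower-degree terms, and the relation $gf=f\nu(g)$ determines these only recursively, so a naive expansion does not obviously terminate. The idea that removes the difficulty is to reduce to degree-$1$ generators---where the top-degree squeeze leaves at most a single scalar term $n_0$---and then to read off the very lowest-degree component of $gf=f\nu(g)$, at which the lowest piece $f_a$ of $f$ immediately forces $n_0=0$.
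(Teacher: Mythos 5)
Your proof is correct and essentially identical to the paper's: the paper also proves (1) by comparing top components of $x_if=f\nu(x_i)$ (using regularity of $f^\vee$ to force $\deg\nu(x_i)=1$) and then killing the scalar term via the lowest nonzero component $f_e$ of $f$, and it likewise derives (2)--(4) from Lemma \ref{lem.nag1} and the uniqueness of the normalizing automorphism. Your write-up just spells out a few details (working with arbitrary $g\in S_1$ rather than the generators $x_i$, and the nonemptiness of the set of regular components in (4)) that the paper leaves implicit.
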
 

\begin{proof} (1) 
Since $x_if=f\nu(x_i)$ and $f^\vee$ is regular, $\deg \nu(x_i)=1$.  Let $e\in \NN$ be the smallest integer such that $f_e\neq 0$.  Then $\nu(x_i)_0=0$ since $0=f_e\nu(x_i)_0$ and $\nu(x_i)_0 \in k$. This implies that $\nu\in \Aut^{\ZZ}S$. 

(2), (3) Since $gf_j=f_j\nu(g)$ for every $g\in S$ by Lemma \ref{lem.nag1}, $f_j \in S_j$ is a normal element for every $j$,  and the normalizing automorphism of $f_j$ is $\nu$ whenever $f_j$ is regular.

(4) Clear from (3).  
\end{proof} 

By the above lemma, if $(f_1, \dots, f_m)$ is a strongly regular normal sequence, then the normalizing automorphism of $f_i$ is a graded algebra automorphism as long as $f_1, \dots, f_{i-1}$ are homogeneous.  The following lemma is useful to find a regular normal element $g$ of degree 2 in $S/(f)$.

\begin{lemma} \label{lem.noce2} 
Let $S$ be a graded algebra and $f, g\in S$.  If $f$ is a homogeneous normal element, and $\overline {g^\vee}$ is regular in $S/(f)$, then $\bar g$ is regular in $S/(f)$.  
% If $F^\vee$ is a regular sequence in $S$, then $F$ is a trancatable regular (normal) sequence in $S$.  
\end{lemma}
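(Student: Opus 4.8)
The plan is to reduce the regularity of $\bar g$ to that of its top-degree part $\overline{g^\vee}$ by a leading-term argument inside the quotient algebra $A := S/(f)$.

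The first step is to observe that $A$ is a graded algebra. Since $f$ is homogeneous, the two-sided ideal $(f)$ is a homogeneous ideal of $S$, so the $\ZZ$-grading descends to $A := S/(f)$. (The normality of $f$ gives the stronger statement $(f) = fS = Sf$, but for this argument the only feature used is that $(f)$ is graded.) Setting $d := \deg g$, so that $g^\vee = g_d$ and $\bar g = \overline{g^\vee} + \sum_{i<d}\overline{g_i}$ in $A$, I would next note that the hypothesis that $\overline{g^\vee}$ is regular forces $\overline{g^\vee}\neq 0$ (we may assume $A\neq 0$, else the claim is vacuous); hence $\overline{g^\vee}$ is honestly the nonzero top-degree homogeneous component of $\bar g$.

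For right regularity, I would take $\bar h\in A$ with $\bar h\bar g = 0$ and, assuming $\bar h\neq 0$, let $\bar h_e$ denote its nonzero top-degree component, of degree $e$. The key point is that in the graded algebra $A$ the degree-$(e+d)$ component of $\bar h\bar g$ is exactly $\bar h_e\,\overline{g^\vee}$, since this is the only way to reach degree $e+d$ from components of $\bar h$ of degree $\leq e$ and components of $\bar g$ of degree $\leq d$. From $\bar h\bar g = 0$ this top component vanishes, giving $\bar h_e\,\overline{g^\vee} = 0$, and then right regularity of $\overline{g^\vee}$ yields $\bar h_e = 0$, a contradiction. Hence $\bar h = 0$ and $\bar g$ is right regular. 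Left regularity is entirely symmetric: comparing top-degree components of $\bar g\bar h$ gives $\overline{g^\vee}\,\bar h_e = 0$, and left regularity of $\overline{g^\vee}$ forces $\bar h_e = 0$.

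The one place that needs care --- and the only real obstacle --- is the claim that the extreme-degree component of a product equals the product of the extreme-degree components, with no cancellation. This is exactly what the $\ZZ$-grading of $A$ buys us, and it is the reason the hypotheses that $f$ is homogeneous and that $\overline{g^\vee}$ is regular suffice; once it is granted, the regularity of $\bar g$ in both directions is immediate and requires no further computation.
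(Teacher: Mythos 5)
Your proof is correct, and it takes a genuinely different route from the paper's. The paper works upstairs in $S$ with representatives: from $\bar a\bar g=0$ with $\bar a\neq 0$ it first chooses a representative $a$ satisfying $\overline{a^\vee}\neq 0$, then invokes \emph{normality} of $f$ to write $ag=bf$, and uses homogeneity of $f$ to get $(ag)^\vee=b_jf\in (f)$, so that $\overline{a^\vee}\,\overline{g^\vee}=\overline{(ag)^\vee}=0$, contradicting the regularity of $\overline{g^\vee}$. You instead pass to the quotient once and for all: since $(f)$ is a graded ideal, $A=S/(f)$ is itself $\ZZ$-graded, and the lemma becomes the standard leading-term fact that an element of a graded ring whose top homogeneous component is regular is itself regular. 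This buys two things. First, as you note, normality of $f$ is never used, so your argument proves a slightly stronger statement: the conclusion holds for $S/I$ with $I$ any graded ideal, in particular for any homogeneous $f$. Second, you bypass two points the paper leaves implicit: the existence of a representative with $\overline{a^\vee}\neq 0$ (in $A$ the top component $\bar h_e$ of $\bar h\neq 0$ is intrinsically defined, so no choice is needed), and the caveat that $a^\vee g^\vee$ may vanish in $S$, in which case $(ag)^\vee\neq a^\vee g^\vee$ and the paper's chain $\overline{a^\vee}\cdot\overline{g^\vee}=\overline{a^\vee g^\vee}=\overline{(ag)^\vee}$ needs a separate (trivial) case; in $A$ the degree-$(e+d)$ component of $\bar h\bar g$ equals $\bar h_e\,\overline{g^\vee}$ whether or not it vanishes. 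What the paper's version buys in exchange is mainly stylistic: it stays within the normal-element formalism ($ag=bf$, normalizing automorphisms) used in the surrounding subsection, whereas your argument is self-contained and strictly more general.
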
 

\begin{proof} 
Suppose that there exists $a\in S$ such that $\bar a\neq 0$ but $\bar a\bar g=\overline {ag}=0$ in $S/(f)$.  We may assume that $\overline {a^\vee}\neq 0$ in $S/(f)$.   
Since $f$ is normal in $S$, there exists $b=\sum b_i\in S$ where $b_i\in S_i$ such that $ag=bf$ in $S$.  Since $f$ is homogeneous,  
$(ag)^\vee=(bf)^\vee=(\sum b_if)^\vee= b_jf$ in $S$ for some $j\in \ZZ$, we have $\overline {(ag)^\vee}=0$ in $S/(f)$. Hence, $\overline {a^\vee}\cdot\overline { g^\vee} = \overline{a^\vee g^\vee}=\overline{(ag)^\vee} = 0$ in $S/(f)$. We have $\overline {a^\vee}=0$ in $S/(f)$ since $\overline {g^\vee}$ is regular in $S/(f)$, which is a contradiction. Hence, $\bar  g$ is right regular in $S/(f)$.  We can similarly show that $\bar g$ is left regular in $S/(f)$.  
\end{proof}

\subsection{Classification} 

We now classify noncommutative affine pencils of conics.  Note that since every $S\in \cA_{2, 0}$ is a domain, every non-zero element in $S$ is regular. 

\begin{proposition} List of all (regular) normal elements $f\in S$ of degree 2 up to $t$-equivalence for each $S\in \cA_{2, 0}\setminus \cB_{2, 0}$ is given in Table \ref{tab-f}.
\begin{table}[h]
\caption{List of (regular) normal elements in $S$ up to $t$-equivalence.} \label{tab-f}
\begin{tabular}{|c|c|}
\hline
$S$                & $f$                   \\
\hline
 $k_J[x, y]$                & $y^2$                   \\ 
\hline 
$k_{\l}[x, y] \; (\l\neq 0, \pm 1)$                & $x^2, y^2, xy$                   \\ 
\hline 
$k_{-1}[x, y]$                & $x^2, x^2+1, x^2+y^2, x^2+y^2+1, xy$                   \\ 
\hline
\end{tabular}
\end{table}
\end{proposition}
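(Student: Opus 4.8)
The plan is to treat each $S\in\cA_{2,0}\setminus\cB_{2,0}$ separately, using \exref{exm-2dimqpa2} to write $S=k_J[x,y]$, or $S=k_\lambda[x,y]$ with $\lambda\neq 0,\pm1$, or $S=k_{-1}[x,y]$. Since each such $S$ is a domain, every nonzero element is regular, so a degree-$2$ element $f=f_0+f_1+f_2$ (with $f_i\in S_i$ and $f_2\neq 0$) automatically satisfies that $f^\vee=f_2$ is regular, and the ``strongly regular normal'' hypotheses are automatic. Thus \lemref{lem.noce1} applies: the normalizing automorphism $\nu$ of $f$ lies in $\Aut^{\ZZ}S$, each homogeneous component $f_j$ is normal with the \emph{same} $\nu$, and $f$ is central if and only if $\nu=\id$. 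This reduces the problem to (a) classifying the homogeneous normal elements $f_2\in S_2$ together with their normalizing automorphisms, and (b) deciding which lower pieces $f_1\in S_1$ and $f_0\in k$ may be adjoined.

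First I would classify $f_2\in S_2$. Writing $f_2=\alpha x^2+\beta xy+\gamma y^2$ and imposing $xf_2=f_2\nu(x)$ and $yf_2=f_2\nu(y)$ for a graded automorphism $\nu$, a short direct computation in each algebra pins down $f_2$ and $\nu$: for $k_\lambda[x,y]$ with $\lambda\neq\pm1$ the only solutions up to scalar are $x^2,y^2,xy$, with the distinct diagonal normalizing automorphisms $\diag(1,\lambda^{-2})$, $\diag(\lambda^{2},1)$, $\diag(\lambda,\lambda^{-1})$; for $k_{-1}[x,y]$ one gets the central plane $\langle x^2,y^2\rangle$ (with $\nu=\id$) together with $xy$ (with $\nu=-\id$); and for the Jordan plane $k_J[x,y]$ only $y^2$ survives, with $\nu\colon x\mapsto x-2y,\ y\mapsto y$. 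In parallel I would record the degree-$1$ normal elements and their automorphisms (namely $x$ and $y$, with distinct $\nu$, in the $k_\lambda$ and $k_{-1}$ cases, and $y$ alone in $k_J$), so that the matching in step (b) becomes immediate.

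Next, by \lemref{lem.nag1} and \lemref{lem.reno}, the piece $f_1$ can be nonzero only if there is a degree-$1$ normal element sharing the normalizing automorphism of $f_2$; comparing the automorphisms recorded above shows this never happens for the relevant range of $\lambda$, so $f_1=0$ throughout. Likewise $f_0\neq 0$ forces $\nu=\id$ by \lemref{lem.noce1}, i.e. $f_2$ central, which occurs only for $k_{-1}[x,y]$ with $f_2\in\langle x^2,y^2\rangle$. Hence the full list of normal degree-$2$ elements is $\{cx^2,cy^2,cxy\}$ for $k_\lambda$ with $\lambda\neq\pm1$; $cy^2$ for $k_J$; and $\{\alpha x^2+\gamma y^2+\delta:(\alpha,\gamma)\neq 0\}\cup\{\beta xy\}$ for $k_{-1}$.

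Finally I would reduce these lists modulo $t$-equivalence, allowing the harmless rescaling of a single element by a nonzero scalar. For this I need $\Aut^{\ZZ}S$, obtained by imposing that a linear substitution preserve the defining relation: the automorphisms are diagonal for $k_\lambda$ with $\lambda\neq\pm1$, diagonal together with the swap $x\leftrightarrow y$ for $k_{-1}$, and $x\mapsto ax+by,\ y\mapsto ay$ for $k_J$. Diagonal scalings collapse each monomial orbit to one representative, yielding $x^2,y^2,xy$ for $k_\lambda$ (three classes, as no swap identifies $x^2$ with $y^2$) and $y^2$ for $k_J$; for $k_{-1}$ the diagonal action, the swap, and rescaling of the constant term to $0$ or $1$ produce exactly the five representatives $x^2,\ x^2+1,\ x^2+y^2,\ x^2+y^2+1,\ xy$. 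I expect the main obstacle to be the $k_{-1}$ case: both confirming that $x^2,y^2$ are genuinely central while $xy$ is not (so that constants may be adjoined only within the central plane), and carrying out the orbit analysis of the two-parameter family $\alpha x^2+\gamma y^2+\delta$ under the diagonal-plus-swap action together with rescaling, so as to verify that it gives precisely the four listed representatives with no repetitions.
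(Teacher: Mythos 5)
Your proposal is correct and follows essentially the same route as the paper's (sketched) proof: both reduce via Lemma~\ref{lem.noce1} to classifying homogeneous normal elements of degrees $1$ and $2$ together with their normalizing automorphisms, conclude $f_1=0$ from the mismatch of automorphisms, allow a constant term only when $f_2$ is central (which happens only in the plane $\langle x^2,y^2\rangle$ of $k_{-1}[x,y]$), and finally reduce by $\Aut^{\ZZ}S$ exactly as in the paper's worked example for $k_\lambda[x,y]$. Your recorded data (the automorphisms $\diag(1,\lambda^{-2})$, $\diag(\lambda^2,1)$, $\diag(\lambda,\lambda^{-1})$, the maps $\nu=\pm\id$ in the $(-1)$-case, $x\mapsto x-2y$ for $y^2$ in $k_J[x,y]$, and the descriptions of $\Aut^{\ZZ}S$ in each case) all check out.
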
 

\begin{proof} (Sketch) Let $S\in \cA_{2, 0}\setminus \cB_{2, 0}$ and $f\in S$ a (regular) normal element of degree 2 in $S$.   Since 
% $S$ is a domain and 
$f_0\in Z(S)$, if $f_0\neq 0$, then $f\in Z(S)$ by Lemma \ref{lem.noce1} (4).  Moreover, if $f_1\neq 0$, then $f_1$ is a (regular) normal element having the same normalizing automorphism as that of  $f_2$ by Lemma \ref{lem.noce1} (3). If $S$ is not commutative, then it is not difficult to find all homogeneous (regular) normal elements of degree 1 and 2 in $S$, and to compute their normalizing automorphisms, to conclude $f_1=0$.  
\end{proof} 

\begin{example} Let $S=k_{\l}[x, y]\in \cA_{2, 0}\setminus \cB_{2, 0}$ where $\l \neq 0, 1$.  
We can check that the only homogeneous (regular) normal elements of degree 1 are $x, y$ up to scalar. 
If $\l \neq -1$, then we can check that the only homogeneous (regular) normal elements of degree 2 are $x^2, y^2, xy$ up to scalar.  We can also check that the normalizing automorphisms of $x, y, x^2, y^2, xy$ are all distinct and not the identity, so we can conclude that the only  (regular) normal elements of degree 2 are $x^2, y^2, xy$ up to scalar by Lemma \ref{lem.noce1} (3).   

If $\l=-1$, then we can check that the only homogeneous (regular) normal elements of degree 2 are $\a x^2+\b y^2, xy$ up to scalar where $\a, \b\in k$.  We can also check that the normalizing automorphisms of $x, y, \a x^2+\b y^2, xy$ are all distinct and $\a x^2+\b y^2$ is the only central elements among them, so we can conclude that the only  (regular) normal elements of degree 2 are $\a x^2+\b y^2+\c, xy$ up to scalar where $\a, \b, \c\in k$ by Lemma \ref{lem.noce1} (3), (4).
Since 
$$\Aut^{\ZZ}(S)=\left \{\begin{pmatrix} a & 0 \\ 0 & d \end{pmatrix}, \begin{pmatrix} 0 & b \\ c &  0 \end{pmatrix}\mid a, b, c, d\in k, ad\neq 0, bc\neq 0\right\},$$ 
we can show that $f$ is $x^2, x^2+1, x^2+y^2, x^2+y^2+1, xy$ up to $t$-equivalence.    
\end{example}

\begin{theorem} \label{thm.Tb1} 
Table \ref{tab-fgh} consists of the following information: 
\begin{enumerate}
\item{} List of 2-dimensional quantum polynomial algebras $S=k\<x, y\>/(h)
\in \cA_{2, 0}\setminus \cB_{2, 0}$. 
\item{} List of (regular) normal elements $f\in S$ of degree 2 up to $t$-equivalence for each $S$. 
\item{} List of elements $g\in S/(f)$ such that $(f, g)$ is a strongly regular normal sequence of degree 2 in $S$ up to $st$-equivalence of  $(f, g)$ in $S$ for each pair $(S, f)$ such that  $S/(f)$ is not commutative.  
\item{} Name of the sequence $(f, g, h)$ in $k\<x, y\>$.  
\end{enumerate}    
(Since $(y^2, x^2, xy-\l yx)\sim _{s}(x^2, y^2, xy-\l yx)$ in $k\<x, y\>$, we delete the pair $(k_{\l}[x, y], y^2)$ from Table \ref{tab-fgh}.) 
\begin{table}[h]
\caption{List of regular normal elements in $S/(f)$.} 
\label{tab-fgh}
\begin{tabular}{|l|l|l|l|}
\hline
$S=k\<x, y\>/(h)$                & $f$         & $g$                            &  $(f, g, h)$ \\ \hline
$k_J[x,y]$         & $y^2$       & $S/(f)$: comm.                          & $F_1$          \\ \hline
$k_{\lambda}[x,y]$ & $x^2$       & $y^2$                          & $F_2(\l)$          \\ \cline{2-4} 
$(\l\neq 0, \pm 1)$          & $yx$        & $S/(f)$: comm.                          & $F_3$          \\ \hline
\multirow{11}{*}{$k_{-1}[x,y]$}     & \multirow{3}{*}{$x^2$}      & $y^2$                          & $F_4$          \\ \cline{3-4} 
                   &             &  $y^2+yx$                    & $F_5$          \\ \cline{3-4} 
                   &             & $y^2+1$                        & $F_6$          \\ \cline{2-4} 
                   & \multirow{2}{*}{$x^2+1$}     & $y^2$                          & $F_7$          \\ \cline{3-4} 
                   &             & $y^2+1$                        & $F_8$          \\ \cline{2-4} 
                   & \multirow{3}{*}{$x^2+y^2$}   & $x^2$                          & $F_9$          \\ \cline{3-4} 
                   &             & $x^2+1$                        & $F_{10}$         \\ \cline{3-4} 
                   &             &  $x^2+\a yx \;\; (\a  \neq 0,\pm 1)$ & $F_{11}(\a)$         \\ \cline{3-4} 
                   &             &  $yx$ & $F_{12}$        \\ \cline{2-4}
                   & \multirow{2}{*}{$x^2+y^2+1$} & $x^2 + \a \;\; (\a=0,1) $                          & $F_{13}(\a)$         \\ \cline{3-4} 
                   &  & $x^2 + \a \;\; (\a \neq 0,1)$                          & $F_{14}(\a)$         \\ \cline{3-4}
                   &  & $yx$                          & $F_{15}$         \\ \cline{2-4}
                   & $yx$        & $S/(f)$: comm.                          & $F_{16}$        \\ \hline
\end{tabular}
\end{table}
\end{theorem}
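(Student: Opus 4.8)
The plan is to read Table~\ref{tab-fgh} column by column. Columns~1 and~2 require no new work: column~1 is the list of noncommutative $2$-dimensional quantum polynomial algebras from Example~\ref{exm-2dimqpa2} (namely $k_J[x,y]$, $k_\l[x,y]$ with $\l\neq 0,\pm1$, and $k_{-1}[x,y]$), and column~2 is exactly the list of degree-$2$ regular normal elements $f$ up to $t$-equivalence recorded in Table~\ref{tab-f}. So the content to be established is columns~3 and~4, together with the deletion of the row $(k_\l[x,y],y^2)$, which is justified by the displayed $s$-equivalence $(y^2,x^2,xy-\l yx)\sim_s(x^2,y^2,xy-\l yx)$, a transposition of the first two entries realized by a permutation matrix in $\GL_2(k)$. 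The first step for each pair $(S,f)$ is to decide whether $E:=S/(f)$ is commutative. A short direct computation shows $E$ is commutative exactly for $(k_J[x,y],y^2)$, $(k_\l[x,y],yx)$, and $(k_{-1}[x,y],yx)$: in each of these the defining relation collapses to $xy=yx$ in the quotient. For those rows we record ``comm.'' in column~3 and move on.

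For the remaining pairs, where $E=S/(f)$ is noncommutative, I would classify the degree-$2$ elements $g$ with $(f,g)$ strongly regular normal as follows. Writing $\bar g=\bar g_2+\bar g_1+\bar g_0$ with $\bar g_i\in E_i$, the requirement that $(f^\vee,g^\vee)=(f,\bar g_2)$ be a regular normal sequence means precisely that $\bar g_2$ is a homogeneous regular normal element of degree $2$ in $E$; and once $\bar g_2$ is regular, \lemref{lem.noce2} gives that $\bar g$ itself is regular in $E$. By \lemref{lem.noce1} (2), (3) the homogeneous parts $\bar g_2,\bar g_1,\bar g_0$ are each normal with the common normalizing automorphism $\nu$ of $\bar g$ whenever they are regular, while $\bar g_0\in k\subset Z(E)$ automatically. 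Thus the task reduces to three finite computations inside $E$: (a) list the homogeneous regular normal degree-$2$ elements $\bar g_2$ up to scalar, by the same method used for Table~\ref{tab-f} but now in the quotient; (b) compute each normalizing automorphism; and (c) for each $\bar g_2$, determine the admissible degree-$1$ part $\bar g_1$ (a homogeneous normal element of degree $1$ sharing $\nu$, which comparison of normalizing automorphisms will force to be $0$, just as $f_1=0$ in the proof of Table~\ref{tab-f}) and the admissible constant $\bar g_0$. \lemref{lem.Tak} (via Hilbert series) and \lemref{lem.reno} supply the regularity and normality checks needed throughout.

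Finally I would reduce the resulting lists up to $st$-equivalence. The available moves are $\sim_t$, using the graded automorphisms $\Aut^{\ZZ}S$ (for $k_{-1}[x,y]$ this is the group of diagonal and anti-diagonal matrices displayed in the Example preceding Table~\ref{tab-fgh}), together with the $\sim_s$ moves that replace $g$ by a scalar multiple of $g$ plus a multiple of $f$. To phrase the reduction correctly as an $st$-equivalence of the triple $(f,g,h)$ in $k\<x,y\>$, I would invoke \lemref{lem.stSA}, which lifts $st$-equivalence in $S$ to $st$-equivalence of $(F,H)$ in $k\<x,y\>$. Here the constant $\bar g_0$ plays a genuine role: since graded automorphisms cannot alter it and adding multiples of the homogeneous $f$ only affects the degree-$2$ part, a term such as $+1$ survives the reduction, which is why entries like $y^2$ and $y^2+1$ (or $x^2$ and $x^2+1$) occur as distinct rows. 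Irredundancy of the final list is then confirmed by distinguishing invariants: commutativity of $E$, the isomorphism type of $E$, and the normalizing automorphisms, which separate the surviving entries.

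The hard part will be the $k_{-1}[x,y]$ block, which contributes the bulk of the table. Four noncommutative quotients $S/(f)$ must be analyzed, and for each one steps (a)--(c) and the $st$-reduction must be carried out carefully. The genuinely delicate points are (i) enumerating all homogeneous regular normal degree-$2$ elements in each quotient together with their normalizing automorphisms without omission, (ii) pinning down the admissible constants $\bar g_0$, and (iii) handling the one-parameter families $F_{11}(\a)$, $F_{13}(\a)$, $F_{14}(\a)$. For these one must verify both that the stated $\a$-ranges exhaust the possibilities and that distinct admissible values yield $st$-inequivalent sequences; in particular the anti-diagonal automorphism $x\leftrightarrow y$ combined with the $\sim_s$ move $g\mapsto g-f$ collapses the two values $\a=0$ and $\a=1$ for $f=x^2+y^2+1$ into the single class $F_{13}(\a)$, whereas $F_{14}(\a)$ with $\a\neq 0,1$ remains a faithful one-parameter family. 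Getting these identifications exactly right, so that no parameter value is double-counted and none is dropped, is the crux of the argument.
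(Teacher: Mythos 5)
Your plan is sound, and essentially identical to the paper's own method, for the rows where $f$ is homogeneous: it is precisely what the paper carries out for $k_{-1}[x,y]/(x^2)$ in Example~\ref{ex.st4} (pin down $\bar g_2$ as a homogeneous regular normal element of $E=S/(f)$, use Lemmas~\ref{lem.nag1} and~\ref{lem.noce1} to force $\bar g_1=0$ and to control the constant, use Lemmas~\ref{lem.Tak} and~\ref{lem.noce2} for regularity). Your treatment of the deleted row, of the commutative quotients, and of the collapse of $\a=0,1$ into the single class $F_{13}(\a)$ is also correct.

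The genuine gap is that your classification mechanism presupposes that $E=S/(f)$ is a graded algebra: you write $\bar g=\bar g_2+\bar g_1+\bar g_0$ with $\bar g_i\in E_i$ and speak of homogeneous regular normal degree-$2$ elements of $E$. But Table~\ref{tab-f} contains the inhomogeneous normal elements $f=x^2+1$ and $f=x^2+y^2+1$ of $k_{-1}[x,y]$; for these the ideal $(f)$ is not homogeneous, so $E$ carries no $\ZZ$-grading, $E_i$ is undefined, and your decomposition of $\bar g$ is meaningless. The two lemmas you lean on are likewise inapplicable there: Lemma~\ref{lem.noce1} is stated for a graded algebra, and Lemma~\ref{lem.noce2} explicitly requires $f$ to be homogeneous. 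Your identity $(f^\vee,g^\vee)=(f,\bar g_2)$ also silently assumes $f^\vee=f$, which fails in exactly these cases; for inhomogeneous $f$ strong regularity involves the two \emph{different} algebras $S/(f)$ and $S/(f^\vee)$, a distinction your reduction erases. These cases are not marginal: they produce the rows $F_7$, $F_8$, $F_{13}(\a)$, $F_{14}(\a)$, $F_{15}$, i.e.\ five of the sixteen families in Table~\ref{tab-fgh}. The paper's proof avoids the problem by never using a grading on $S/(f)$: it computes a noncommutative Gr\"obner basis of the (possibly inhomogeneous) ideal generated by $f$ and $h$ in $k\langle x,y\rangle$, fixes the resulting explicit $k$-basis of $S/(f)$, and finds all normal elements of degree $2$ by comparing coefficients of $xg$, $yg$ against $gu$, $gv$ in that basis via Lemma~\ref{lem.reno}~(1) --- a computation that makes sense whether or not $(f)$ is homogeneous --- and only afterwards checks strong regularity. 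To repair your argument for $f=x^2+1$ and $f=x^2+y^2+1$ you would need either this direct coefficient comparison or a separate filtered-to-graded argument relating $S/(f)$ to $S/(f^\vee)$; as written, your method cannot produce those rows.
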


\begin{proof} 
(Sketch) We will prove by the following steps: 

{\bf Step 1:}  Show that every $\{f, h\}$ in Table \ref{tab-fgh} is a noncommutative Gr\"obner basis for the two-sided ideal $(f, h)$ of $k\<x, y\>$. 

{\bf Step 2:} Fix a $k$-basis for each $S/(f)=k\<x, y\>/(f, h)$ using  the noncommutative Gr\"obner basis obtained in Step 1.  

{\bf Step 3:}  For an element $g\in S/(f)$ of degree 2, write $xg, yg, gu, gv$ where $u, v\in S/(f)$ are elements of degree 1 as linear combinations of the $k$-basis for $S/(f)$ obtained in Step 2 and compare the coefficients of the $k$-basis for the pairs $(xg, gu)$ and $(yg, gv)$ to find all normal elements of degree 2 in $S/(f)$ by Lemma \ref{lem.reno} (1).

{\bf Step 4:} For $g$ obtained in Step 3, check that $(f, g)$ is a strongly regular normal sequence. 
\end{proof}

\begin{example} \label{ex.st4} 
Let $S/(f)=k_{-1}[x, y]/(x^2)$.   Using the order $y<x$, we can check that $\{x^2, xy+yx\}$ is a  noncommutative Gr\"obner basis for the two-sided ideal $(x^2, xy+yx)$ of $k\<x, y\>$, so we can compute that $k\<x, y\>/(x^2, xy+yx)=S/(f)$ has a $k$-vector space basis $\{1, x, y, yx, y^2, y^2x, y^3, \dots \}$.  Suppose that $(f, g)$ is a strongly regular normal sequence of degree 2 in $S=k_{-1}[x, y]$.  Since $(f^\vee, g^\vee)=(x^2, g_2)$ is a homogeneous regular normal sequence of degree 2 in $S$, $g_2\neq yx$, so we may write $g_2=\c yx+y^2$ for some $\c\in k$ up to scalar.  Since
\begin{align*} 
xg_2 & =x(\c yx+y^2)=y^2x=(\c yx+y^2)x=g_2x, \\
% gx & =(ayx+by^2)x=by^2x, \\
yg_2 & =y(\c yx+y^2)=\c y^2x+y^3=(\c yx+y^2)(\c x+y)=g_2(2\c x+y), 
% gy & =(ayx+by^2)y=-ay^2x+by^3, 
\end{align*}
$g_2$ is a normal element with the normalizing automorphism $\nu=\begin{pmatrix} 1 & 0 \\ 2\c & 1 \end{pmatrix}$, and $g_2$ is central if and only if $\c=0$. By Lemma \ref{lem.noce1} (3), the normalizing automorphism of $g$ is also $\nu\in \Aut^{\ZZ}S/(f)$, so  $ug_1 = g_1 \nu(u)$ for every $u\in S$ by Lemma \ref{lem.nag1}.
If $g_1=\a x+\b y$ where $\a, \b\in k$, then
\begin{align*} 
&-\b yx = x(\a x+\b y) = xg_1  = g_1\nu(x) = (\a x+\b y)x=\b yx, \\%=-(ax+by)x=g_1(-x) \\
&\a yx+\b y^2 = y(\a x+\b y)= yg_1  = g_1\nu(y)  =(\a x+\b y)(2\c x+y)=(2\b\c-\a)yx+\b y^2,
\end{align*}
so $\a=\b=0$. It follows that $g_1=0$, so $g=y^2+\c yx$ or $g=y^2+\d$ for some $\c, \d\in k$. It is now easy to see that $g$ is $y^2, y^2+1, y^2+yx$ up to $t$-equivalence in $S$.  Since 
$$(k_{-1}[x, y]/(x^2, y^2))^!\cong k[x, y], \qquad (k_{-1}[x, y]/(x^2, y^2+yx))^!\cong k_J[x, y]\in \cA_{2, 0},$$ 
and
$$H_{k_{-1}[x, y]/(x^2, y^2)}(t)=H_{k_{-1}[x, y]/(x^2, y^2+yx))}(t)=(1+t)^2=(1-t^2)^2/(1-t)^2$$ 
by Theorem \ref{thm.qpak}, $g_2\in S/(f)$ is regular by Lemma \ref{lem.Tak}.  By Lemma \ref{lem.noce2}, $g\in S/(f)$ is regular,  so $(f, g)$ is a strongly regular normal sequence.
\end{example}

\begin{example} \label{ex.ex} Example \ref{ex.st4} provides some counter-examples. 
\begin{enumerate}
\item{} For a sequence $(f, g)=(x^2, y^2+y)$ in $ k_{-1}[x, y]$, $(f^\vee, g^\vee)=(x^2, y^2)$ is a regular normal sequence, but $(f, g)$ is not a (strongly) regular normal sequence (see Remark \ref{rem.3} (2)).
\item{} $(x^2, y^2+yx)$ is a regular normal sequence in $ k_{-1}[x, y]$, but $(y^2+yx, x^2)$ is not a regular normal sequence in $ k_{-1}[x, y]$ (see Remark \ref{rem.2}).
\item{}  $y^2+yx, y^2-yx\in k_{-1}[x, y]/(x^2)$ are regular normal (non-central) elements with the distinct normalizing automorphisms, but $(y^2+yx)+(y^2-yx)=2y^2\in k_{-1}[x, y]/(x^2)$ is a regular central element (see Remark \ref{rem.1}).  
\end{enumerate}  
\end{example}

\begin{theorem} \label{thm.ANC}\label{lem.scn}  
Let $(f, g, h)\in \cA_{2, 2}^{\vee, st}$ so that $S=k\<x, y\>/(h)\in \cA_{2, 0}$ and $E=S/(f, g)\in \cA_{2, 2}^\vee$. 
\begin{enumerate}
\item{} $E$ is not commutative if and only if $(f, g, h)$ is st-equivalent in $k\<x, y\>$ to one of the following: 
$$(x^2, y^2, xy-\l yx), (x^2, y^2+yx, xy+yx), (x^2,y^2+1, xy+yx), (x^2+1,y^2+1, xy+yx)$$
where $\l\neq 0, 1$. 
\item{}  $E$ is commutative if and only if there exists $(f', g', xy-yx)\in \cB_{2, 2}^{\vee, st}$ such that $$(f, g, h)\sim_{st}(f', g', xy-yx)$$ in $k\<x, y\>$. 
\end{enumerate}
\end{theorem}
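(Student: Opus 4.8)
The plan is to reduce the statement to the finite list already produced by Theorem~\ref{thm.Tb1}, and then to exploit the fact that in the present theorem the equivalence $\sim_{st}$ is taken in the free algebra $k\langle x,y\rangle$ (so that $\sim_t$ is governed by the full group $\Aut^{\ZZ}k\langle x,y\rangle=\GL_2(k)$), whereas the $g$-column of Table~\ref{tab-fgh} classifies the pairs $(f,g)$ only up to $\sim_{st}$ inside the fixed algebra $S$ (where $\sim_t$ uses the smaller group $\Aut^{\ZZ}S$). By Lemma~\ref{lem.stSA}, each entry $F_i=(f,g,h)$ is a representative of its $\sim_{st}$-class in $k\langle x,y\rangle$, so Theorem~\ref{thm.Tb1} shows that every $(f,g,h)\in\cA_{2,2}^{\vee,st}$ with $S$ non-commutative is $st$-equivalent in $k\langle x,y\rangle$ to one of $F_1,\dots,F_{16}$; the extra automorphisms available over $k\langle x,y\rangle$ are what collapse the non-commutative entries among these down to the four families. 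Since a quotient of a commutative algebra is commutative, a non-commutative $E$ forces $S$ non-commutative, so this list covers all of part~(1). Throughout I use the commutator identities in $S$: $xy-yx=(\lambda-1)yx$ in $k_\lambda[x,y]$ and $xy-yx=-y^2$ in $k_J[x,y]$, so that $E$ is commutative if and only if the relevant element ($yx$, resp. $y^2$) lies in $I_{(f,g)}$.

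For part~(1) I first separate the commutative entries: $F_1,F_3,F_{16}$ (where $S/(f)$ is already commutative) and $F_{12},F_{15}$ (where $g=yx\in I_{(f,g)}$); these go to part~(2). For the remaining eleven entries I produce the $st$-equivalences to the four families, non-commutativity of $E$ then being a consequence of the reverse implication below. The entries $F_2(\lambda),F_4,F_5,F_6,F_8$ are literally the listed triples (with $F_2$ and $F_4$ giving the first family for $\lambda\neq0,\pm1$ and $\lambda=-1$). The easy reductions are pure $\sim_s$ linear combinations in $k\langle x,y\rangle_2$: $F_9\sim_s F_4$ and $F_{13}(\alpha)\sim_s F_6$ or $F_7$ by forming $f-g$; $F_7\sim_{st}F_6$ by the swap $x\leftrightarrow y$ (a $t$-move) followed by transposing $f$ and $g$; and $F_{10},F_{14}(\alpha)$ reduce to $F_8$ by a single $\sim_s$ move followed by a diagonal automorphism $x\mapsto ax,\ y\mapsto dy$ with $a^2,d^2$ chosen to normalize the constant terms (this uses $\alpha\neq0,1$ for $F_{14}$). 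The one substantial step is $F_{11}(\alpha)$: here I apply the non-diagonal automorphism $\phi\colon x\mapsto x+y,\ y\mapsto x-y$ to the first family and verify that $\phi(x^2),\phi(y^2),\phi(xy-\lambda yx)$ and the triple $(x^2+y^2,\,x^2+\alpha yx,\,xy+yx)$ span the same $3$-dimensional subspace of $k\langle x,y\rangle_2$ precisely when $\lambda=\tfrac{\alpha-1}{\alpha+1}$; as $\alpha\neq0,\pm1$ gives $\lambda\neq0,1$, this places $F_{11}(\alpha)$ in the first family. Finally the reverse implication of part~(1) is immediate: each of the four triples gives $xy-yx\notin I_{(f,g)}$ by the commutator identities, so $E$ is non-commutative.

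For part~(2), the implication ``$\Leftarrow$'' follows from Remark~\ref{rem.stc}: $(f,g,h)\sim_{st}(f',g',xy-yx)$ yields $E\cong k[x,y]/I_{\overline{(f',g')}}$, which is commutative. For ``$\Rightarrow$'', if $S=k[x,y]$ then $h=xy-yx$ already and there is nothing to prove; otherwise, by part~(1), a commutative $E$ with non-commutative $S$ is $st$-equivalent to one of $F_1,F_3,F_{12},F_{15},F_{16}$. In each of these the commutator of $S$ lies in $I_{(f,g)}$, which lets me replace $h$ by $xy-yx$ via a single $\sim_s$ move—subtracting the appropriate combination of $f,g$ from $h$ (namely $h-f$ for $F_1$, $h+(\lambda-1)f$ for $F_3$, and $h-2\,yx$ for $F_{12},F_{15},F_{16}$)—which turns the defining relation into $xy-yx$, i.e. makes $S'=k[x,y]$. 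It then remains to confirm that $(f,g,xy-yx)$ genuinely lies in $\cB_{2,2}^{\vee,st}$; this is inherited from the original datum because the algebra $E$ and its Hilbert series are unchanged, and in the commutative setting strong regularity is automatic once the top-degree parts are coprime (Remark~\ref{rem.3}(2)).

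The main obstacle is the reduction of $F_{11}(\alpha)$ in part~(1). All other collapses use only rescalings, coordinate swaps, or linear recombinations visible in the monomial basis; by contrast $F_{11}(\alpha)$ requires discovering the ``rotation'' $x\mapsto x\pm y$, which lies in $\GL_2(k)$ but not in $\Aut^{\ZZ}k_{-1}[x,y]$, and then matching two $3$-dimensional subspaces of $k\langle x,y\rangle_2$; it is also here that a genuine continuous parameter $\lambda=\tfrac{\alpha-1}{\alpha+1}$ enters the first family. A secondary, purely bookkeeping difficulty is to track at each step the admissible parameter ranges (ensuring $\lambda\neq0,1$ and that the normalized constants stay non-zero) and to check that every triple one passes through still defines an algebra in $\cA_{2,0}$ where this is required.
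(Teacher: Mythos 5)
Your proposal is correct and takes essentially the same route as the paper's proof: reduce to the sixteen entries of Table~\ref{tab-fgh} via Theorem~\ref{thm.Tb1} and Lemma~\ref{lem.stSA}, set aside $F_1, F_3, F_{12}, F_{15}, F_{16}$ as the commutative cases, collapse the remaining entries by $\sim_s$ moves, diagonal rescalings, and the key rotation $x\mapsto x+y,\ y\mapsto x-y$ for $F_{11}(\a)$, and for part~(2) replace $h$ by $xy-yx$ with a single $\sim_s$ move using $f$ or $g$. Your value $\l=(\a-1)/(\a+1)$ versus the paper's $(\a+1)/(\a-1)$ is immaterial, since $(x^2,y^2,xy-\l yx)\sim_{st}(x^2,y^2,xy-\l^{-1}yx)$ and $k_\l[x,y]\cong k_{\l^{-1}}[x,y]$; your extra check that the modified triples in part~(2) still lie in $\cB_{2,2}^{\vee,st}$ is a point the paper leaves implicit.
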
  

\begin{proof} 
(1) If $E$ is not commutative, then $S/(f)$ is not commutative, so $F_1, F_3, F_{12}, F_{15}, F_{16}$ in Table \ref{tab-fgh} are excluded.  It is easy to see that $F_4\sim_{st}F_9, F_6\sim _{st}F_7\sim_{st}F_{13}(\a), F_8\sim _{st}F_{10}$  in $k\<x, y\>$  in Table \ref{tab-fgh}. For $F_{14}(\a)$, we have
$F_{14}(\a) \sim_{st} (x^2+\a , y^2+ (1-\a), xy+ yx) \sim_{st} F_8$. % (by abuse of notations). 
If $\phi=\begin{pmatrix} 1 & 1 \\ 1 & -1 \end{pmatrix}\in \GL_2(k)=\Aut^{\ZZ}(k\<x, y\>)$, then 
\begin{align*}
& \phi(x^2+y^2)=(x+y)^2+(x-y)^2=2(x^2+y^2), \\
& \phi(x^2+\a xy)=(x+y)^2+\a (x+y)(x-y)=(1+\a )x^2+(1-\a )xy+(1+\a )yx+(1-\a )y^2, \\
& \phi(xy+yx)=(x+y)(x-y)+(x-y)(x+y)=2(x^2-y^2), 
\end{align*}
so $(x^2+y^2, x^2+\a xy, xy+yx)\sim_{st} (x^2, y^2, xy-\l yx)$  in $k\<x, y\>$  where $\l=(\a +1)/(\a -1)$, that is, $F_{11}(\a)\sim_{st}F_2( (\a+1)/(\a -1))$  in $k\<x, y\>$.  
It is easy to see that none of these algebras $E$ is commutative.

 (2) Suppose that $E$ is commutative.  If $S=k[x, y]$, then the result is trivial. If $S\neq k[x, y]$, then it is enough to check it for $F_1, F_3, F_{12}, F_{15}, F_{16}$ in Table \ref{tab-fgh}.  
Since 
\begin{align*}
&F_1\sim_s(y^2,g, xy-yx),  \, F_3\sim_sF_{16}\sim_s(xy, g, xy-yx), \\
&F_{12} \sim_s (x^2 + y^2 ,xy, xy-yx), \, F_{15} \sim_s (x^2 + y^2 + 1, xy, xy-yx)
% }
% \sim_s(xy, xy+yx)$$ 
\end{align*}
in $k\<x, y\>$, the result follows.
\end{proof} 
 
We restate Theorem \ref{thm.ANC} in the following form.  

\begin{corollary} \label{cor.ANC}
Every $(f, g, h)\in \cA_{2, 2}^{\vee, st}\setminus \cB_{2, 2}^{\vee, st}$ is st-equivalent  in $k\<x, y\>$ to one of the following:
$$(x^2, y^2, xy-\l yx), (x^2,xy+y^2, xy+yx), (x^2,y^2+1, xy+yx), (x^2+1,y^2+1, xy+yx)$$
where $\l\neq 0, 1$. 
\end{corollary}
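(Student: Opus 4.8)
The plan is to obtain the Corollary as a repackaging of \thmref{thm.ANC}: deleting the commutative triples from $\cA_{2,2}^{\vee,st}$ leaves precisely the setting of part (1) of that theorem, up to rewriting a single normal form. First I would record that $st$-equivalence of the concatenated sequences in $k\<x,y\>$ preserves the quotient $E=k\<x,y\>/I_{(f,g,h)}$ up to isomorphism (\remref{rem.stc}), so that commutativity of $E$ is constant on each $\sim_{st}$-class; hence ``$E$ is commutative'' is a well-defined property of the class $[(f,g,h)]$.

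Next I would show that every $(f,g,h)\in\cA_{2,2}^{\vee,st}\setminus\cB_{2,2}^{\vee,st}$ has $E$ noncommutative. Were $E$ commutative, \thmref{thm.ANC}(2) would supply a triple $(f',g',xy-yx)\in\cB_{2,2}^{\vee,st}$ with $(f,g,h)\sim_{st}(f',g',xy-yx)$; since $\cB_{2,2}^{\vee,st}$ consists of $\sim_{st}$-classes, this would force $[(f,g,h)]\in\cB_{2,2}^{\vee,st}$, contradicting the hypothesis. With $E$ noncommutative, \thmref{thm.ANC}(1) then shows that $(f,g,h)$ is $st$-equivalent in $k\<x,y\>$ to one of
\[
(x^2,y^2,xy-\l yx),\ (x^2,y^2+yx,xy+yx),\ (x^2,y^2+1,xy+yx),\ (x^2+1,y^2+1,xy+yx)
\]
with $\l\neq 0,1$.

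The final step reconciles this list with the one asserted in the Corollary, which differs only in the second triple ($y^2+yx$ versus $xy+y^2$), by exhibiting an explicit $st$-equivalence. Applying $\phi=\diag(1,-1)\in\GL_2(k)=\Aut^{\ZZ}k\<x,y\>$ carries $(x^2,y^2+yx,xy+yx)$ to $(x^2,y^2-yx,-(xy+yx))$, and then the invertible recombination $(f_1,f_2,f_3)\mapsto(f_1,\,f_2-f_3,\,-f_3)$, with matrix $\left(\begin{smallmatrix}1&0&0\\0&1&0\\0&-1&-1\end{smallmatrix}\right)\in\GL_3(k)$, fixes the first entry $x^2$, produces the middle entry $(y^2-yx)-(-(xy+yx))=xy+y^2$ and the third entry $-(-(xy+yx))=xy+yx$, i.e.\ the triple $(x^2,xy+y^2,xy+yx)$. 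Thus $(x^2,y^2+yx,xy+yx)\sim_{st}(x^2,xy+y^2,xy+yx)$ via $\sim_t$ followed by $\sim_s$, and the reduction to the four triples in the statement is complete. Everything beyond \thmref{thm.ANC} is bookkeeping; the only point needing genuine care is this last identification, where one must notice that the two presentations of the second triple are distinct sequences in the free algebra $k\<x,y\>$ and produce the explicit $\sim_t$-then-$\sim_s$ chain relating them.
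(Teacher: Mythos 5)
Your proof is correct and follows essentially the same route as the paper: rule out commutative $E=k\<x,y\>/(f,g,h)$ via Theorem~\ref{thm.ANC}(2) (a commutative quotient would place the $\sim_{st}$-class of $(f,g,h)$ inside $\cB_{2,2}^{\vee,st}$, contradicting the hypothesis), then invoke Theorem~\ref{thm.ANC}(1). Your last step is actually a genuine improvement on the paper's one-line argument: the second triple in Theorem~\ref{thm.ANC}(1) is $(x^2,\,y^2+yx,\,xy+yx)$ while the corollary lists $(x^2,\,xy+y^2,\,xy+yx)$, and these are distinct sequences in $k\<x,y\>$ which are not related by $\sim_s$ alone (comparing coefficients of $y^2$, $yx$ and $xy$ forces a contradiction); the paper passes over this silently, whereas your explicit chain $(x^2,\,y^2+yx,\,xy+yx)\sim_t(x^2,\,y^2-yx,\,-(xy+yx))\sim_s(x^2,\,xy+y^2,\,xy+yx)$, which checks out, supplies exactly the missing identification.
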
 

\begin{proof}  Let $E=k\<x, y\>/(f, g, h)\in \cA_{2, 2}^\vee$.  
If $E$ is not commutative, then this follows from Theorem \ref{thm.ANC} (1).  If $E$ is commutative, then 
$(f, g, h)\in \cB_{2, 2}^{\vee, st}$, that is, $(f, g, h)\not \in \cA_{2, 2}^{\vee, st}\setminus \cB_{2, 2}^{\vee, st}$ by Theorem \ref{thm.ANC} (2).  
\end{proof} 

The above theorem (corollary) shows that $E\in \cA_{2, 2}^{\vee}$ is commutative if and only if $E\in \cB_{2, 2}^{\vee}$, which is not obvious from the definition. 

\section{$4$-dimensional Frobenius algebras}

In this section, we give complete classifications of 4-dimensional Frobenius algebras up to isomorphism and up to derived equivalence. 

\begin{definition} \label{def-fro}
A finite dimensional algebra $R$ is called a {\it Frobenius} algebra if there exists a bilinear form 
$
(-,-): R \times R \to k
$ 
satisfying the following conditions:
\begin{itemize}
\item[(1)] Associative: $(ab,c) = (a,bc)$ for all $a,b,c \in R$. 
\item[(2)] Nondegenerate: for any $a \in R$, there are $b, c\in R$ such that $(a,b) \neq 0$ and $(c,a) \neq 0$. 
\end{itemize}
\end{definition} 
 
The following lemmas are useful. 
 
\begin{lemma}  [{\cite[Proposition IV 2.4]{SY}}] \label{lem-profro}
If $R = R_1\times \cdots \times R_n$ is a direct product of finite dimensional algebras. Then $R$ is Frobenius if and only if $R_i$ is Frobenius for every $i=1, \dots , n$.
\end{lemma}

We say a quiver $Q$ is {\it path connected} if for any two vertices $i$ and $j$, there are a path from $i$ to $j$ and a path from $j$ to $i$. 
The lemma below follows from {\cite[Corollary 3.4]{Gr}} since every Frobenius algebra is (left and right) self-injective by \cite[Proposition IV 3.8]{SY}.  

\begin{lemma} 
\label{lem.Gr} 
If $R = kQ/I$ is a Frobenius algebra where $Q$ is a connected quiver and $I$ is an admissible ideal, then $Q$ is path connected. 
\end{lemma}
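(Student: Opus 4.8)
The plan is to reduce the statement to a combinatorial property of the path preorder on the vertices of $Q$, using only that $R$ is self-injective (which, for a Frobenius algebra, is \cite[Proposition IV 3.8]{SY}). Since $I$ is admissible, $R=kQ/I$ is basic, its isoclasses of simple right modules $S_i$ are indexed by the vertices, and the indecomposable projective $P_i=e_iR$ has top $S_i$. For vertices $a,b$ write $a\rightsquigarrow b$ if there is a (possibly trivial) directed path from $a$ to $b$ in $Q$; this is a preorder, and ``$Q$ is path connected'' means exactly that $a\rightsquigarrow b$ for all $a,b$, i.e. $Q$ has a single strongly connected component. Because $I$ is admissible it contains no arrows, so $e_aRe_b\neq 0$ whenever there is an arrow $a\to b$, and more generally $S_b$ is a composition factor of $P_a=e_aR$ only if $e_aRe_b\neq 0$, hence only if $a\rightsquigarrow b$; dually, $S_c$ is a composition factor of the indecomposable injective $I_b\cong D(Re_b)$ only if $c\rightsquigarrow b$.

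The key tool is the Nakayama permutation. As $R$ is self-injective, every $P_a$ is also injective and indecomposable, hence is the injective envelope of its (necessarily simple) socle; write $\soc P_a=S_{\nu(a)}$, so that $P_a\cong I_{\nu(a)}$ and $\nu$ is a permutation of the vertex set. Reading off the composition factors of the single module $P_a\cong I_{\nu(a)}$ through its two descriptions sandwiches every composition factor $S_j$ of $P_a$ between $a$ and $\nu(a)$ in the preorder: the projective description gives $a\rightsquigarrow j$, while the injective description (with $b=\nu(a)$) gives $j\rightsquigarrow\nu(a)$. Taking $S_j=\soc P_a=S_{\nu(a)}$ in particular yields $a\rightsquigarrow\nu(a)$ for every vertex $a$.

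The punchline is that $\nu$, being a permutation of a finite set, has finite order $m$, so $a\rightsquigarrow\nu(a)\rightsquigarrow\nu^2(a)\rightsquigarrow\cdots\rightsquigarrow\nu^m(a)=a$; thus $a$ and $\nu(a)$ lie in the same strongly connected component. Combining this with the sandwich above, every composition factor $S_j$ of $P_a$ satisfies $a\rightsquigarrow j\rightsquigarrow\nu(a)\rightsquigarrow a$, so $j$ lies in the strongly connected component of $a$. In particular, whenever there is an arrow $a\to b$ the factor $S_b$ occurs in $P_a$, so $b$ lies in the component of $a$: every arrow of $Q$ has both endpoints in a single strongly connected component. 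Since $Q$ is connected as an undirected graph, there is only one such component, i.e. $Q$ is path connected.

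I expect the only real bookkeeping --- and the step where a clean writeup must be careful --- to be the translation between composition factors of $P_a$, of $I_b$, and the existence of directed paths: keeping the left/right module conventions straight and using admissibility to guarantee that arrows survive in $R$. This identification, together with the Nakayama-orbit argument, is precisely what is packaged in \cite[Corollary 3.4]{Gr} for self-injective $R$, so one may alternatively simply invoke that corollary; I would nonetheless include the short direct argument above for self-containedness.
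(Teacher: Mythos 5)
Your proof is correct. The paper itself gives no direct argument: it makes exactly your first reduction --- every Frobenius algebra is (left and right) self-injective by \cite[Proposition IV 3.8]{SY} --- and then simply invokes \cite[Corollary 3.4]{Gr}, which is the statement about quivers of self-injective algebras that your argument proves from scratch. So the difference is that where the paper outsources the combinatorial content to Green's paper, you supply it: the identification of composition factors of $P_a=e_aR$ and of $I_b\cong \Hom_k(Re_b,k)$ with directed paths (valid because $I$ is admissible, so arrows survive in $R$ and $e_aRe_j\neq 0$ forces a path $a\rightsquigarrow j$), the Nakayama permutation $\nu$ with $\soc P_a=S_{\nu(a)}$ (a genuine permutation because $R$ is basic), the sandwich $a\rightsquigarrow j\rightsquigarrow\nu(a)$ for every composition factor $S_j$ of $P_a\cong I_{\nu(a)}$, and the finite-order trick $\nu^m=\operatorname{id}$ that closes each orbit into a directed cycle, after which undirected connectedness of $Q$ forces a single strongly connected component. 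Each of these steps is sound, and this is essentially the standard proof of Green's result, so your version buys self-containedness at the cost of length, while the paper's citation buys brevity; as you note at the end, the two routes coincide once one is willing to quote \cite[Corollary 3.4]{Gr}.
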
 
 
\begin{lemma} \label{lem-3dimfro}
Every Frobenius algebra up to dimension $3$ is commutative.
\end{lemma}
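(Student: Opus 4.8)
The plan is to reduce to the indecomposable (connected) case and then show that a connected Frobenius algebra of dimension at most $3$ must be local, after which commutativity is immediate. First I would dispose of dimensions $1$ and $2$: any algebra with $\dim_k R \le 2$ is generated as an algebra by $1$ together with at most one further element, so $R = k[a]$ is commutative. For $\dim_k R = 3$, write $R = R_1 \times \cdots \times R_s$ as the product of its indecomposable (connected) blocks, corresponding to the connected components of the Ext-quiver. By Lemma \ref{lem-profro} each $R_i$ is again Frobenius, and $R$ is commutative if and only if every $R_i$ is. Since a block of dimension $\le 2$ is commutative by the above, the only case left is when $R$ is itself connected of dimension $3$, so I may assume this.

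Next I would record that such an $R$ is automatically basic: writing $J = \rad R$, the semisimple quotient $R/J \cong \prod M_{n_i}(k)$ (as $k$ is algebraically closed) cannot contain a factor $M_n(k)$ with $n \ge 2$, because $\dim_k M_2(k) = 4 > 3$. Hence every $n_i = 1$, and $R \cong kQ/I$ for a connected quiver $Q$ with admissible ideal $I$, where the number of vertices equals $v := \dim_k R/J$ and the number of arrows equals $\dim_k J/J^2$.

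The key step is a counting argument. Since $R$ is a connected Frobenius algebra, Lemma \ref{lem.Gr} shows that $Q$ is path connected, i.e.\ strongly connected. If $v \ge 2$, then in a strongly connected quiver every vertex has out-degree at least $1$, so the number of arrows is at least $v$; on the other hand the number of arrows is $\dim_k J/J^2 \le \dim_k J = 3 - v$. This forces $v \le 3 - v$, i.e.\ $v \le 1$, contradicting $v \ge 2$. Therefore $v = 1$, meaning $R$ is local with $R/J = k$ and $\dim_k J = 2$.

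It remains to check that a local algebra of dimension $3$ is commutative, which needs no further use of the Frobenius hypothesis. Since $J$ is nilpotent, $J \supsetneq J^2 \supseteq J^3 = 0$ and $\dim_k J/J^2 \in \{1,2\}$. If $\dim_k J/J^2 = 2$ then $J^2 = 0$, so $R = k \oplus J$ with all products of radical elements vanishing, whence $R$ is commutative. If $\dim_k J/J^2 = 1$, choose $x \in J$ whose class spans $J/J^2$; then $x^2$ spans $J^2 \cong k$ and $R = k \oplus kx \oplus kx^2 \cong k[x]/(x^3)$, again commutative. This completes the argument. The main obstacle is the range $v \ge 2$, where one genuinely needs the Frobenius hypothesis (through path connectedness, Lemma \ref{lem.Gr}) to rule out noncommutative algebras such as the upper triangular matrices $kQ/I$ with $Q = (1 \to 2)$; the inequality $v \le 3 - v$ packages this cleanly, while the remaining local and small-dimensional cases are elementary.
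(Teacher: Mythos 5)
Your proof is correct, and it relies on the same decisive Frobenius input as the paper, namely Lemma \ref{lem.Gr}, but it reaches the conclusion by a genuinely more self-contained route. The paper's proof is a one-liner: it quotes the known classification of algebras of dimension at most $3$ over an algebraically closed field --- the unique noncommutative one is the path algebra $k(1\to 2)$ of the quiver with a single arrow --- and observes that this quiver is not path connected, so the algebra cannot be Frobenius by Lemma \ref{lem.Gr}. You never invoke that classification; instead you reduce to a connected block using Lemma \ref{lem-profro} (which the paper's own proof of this lemma does not use), show the block is basic by the dimension count on $R/\rad R$, and then replace the quoted classification by the counting inequality: a path connected quiver on $v\geq 2$ vertices has at least $v$ arrows, while the number of arrows equals $\dim_k \rad R/\rad^2 R\leq 3-v$, a contradiction; the remaining local case is settled by an elementary radical filtration argument ($J^2=0$, or $R\cong k[x]/(x^3)$). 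In effect you have re-derived the classification fact the paper cites: connected and basic with $v=2$ would force $k(1\to 2)$, and $v=1$ forces one of the two commutative local algebras. The paper's approach buys brevity at the cost of an external ``it is known'' reference; yours is longer but every step is verifiable from first principles and from the two lemmas already stated in the paper, and it makes explicit exactly where the Frobenius hypothesis enters (ruling out $v=2$).
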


\begin{proof}  
	Let $E$ be an algebra up to dimension 3. If $E$ is not commutative, then it is known that $E\cong  k(\xymatrix{ 1 \ar[r] & 2}) $, which is not Frobenius  by Lemma \ref{lem.Gr}.  
\end{proof} 

There is a close relationship between quantum polynomial algebras and Frobenius algebras. 
\begin{theorem} [{\cite[Proposition 5.10]{Sm96}}]  \label{thm.FAS} 
Let $S$ be a Koszul algebra of finite global dimension.  Then  $S$ satisfies Gorenstein condition if and only if $S^!$ is a Frobenius algebra.   In particular, if $S$ is a quantum polynomial algebra, then $S^!$ is a Frobenius algebra.  
% $\uExt^i_S(k, S)\cong \begin{cases} 0 & \text{ if } i\neq n \\ k(n) & \text { if }  i=n.\end{cases}$ if and only if $S^!$ is (graded) Frobenius. 
% A noetherian connected graded algebra $S$ is an $n$-dimensional quantum polynomial algebra if and only if $S^!$ is a Frobenius Koszul algebra such that $H_{S^!}(t)=(1+t)^n$. 
\end{theorem}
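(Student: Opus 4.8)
The plan is to pass through Koszul duality and turn the Gorenstein condition on $S$ into a perfect-pairing (hence Frobenius) condition on $S^!$. First I would record the structural input. Since $S$ is Koszul with $\gldim S = n < \infty$, its quadratic dual $S^!$ is a finite-dimensional connected graded algebra with $S^!_i = 0$ for $i>n$ and $S^!_n\neq 0$; indeed the length of the Koszul resolution of $k_S$ equals $\pd k_S = \gldim S = n$, and that length is the top nonzero degree of $S^!$. Moreover the trivial module $k_S$ has the Koszul minimal free resolution with $i$-th term $P^i = S\otimes_k (S^!_i)^*$, generated in internal degree $i$, whose differentials are built from the comultiplication dual to the multiplication of $S^!$.

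Next I would compute $\uExt^i_S(k,S)$ by applying $\uHom_S(-,S)$ to this resolution. Since $\uHom_S(S\otimes_k (S^!_i)^*, S)\cong S\otimes_k S^!_i$ up to the appropriate internal shift, this yields the cochain complex
$$0 \to S \to S\otimes_k S^!_1 \to \cdots \to S\otimes_k S^!_n \to 0 ,$$
placed in cohomological degrees $0,\dots,n$, whose differentials are the transposes of the Koszul differentials and therefore encode the multiplication maps of $S^!$. Its cohomology is $\uExt^i_S(k,S)$, so the Gorenstein condition is precisely the statement that this complex is acyclic in cohomological degrees $0,\dots,n-1$ and has top cohomology $k(n)$.

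The core of the argument is to identify this last statement with the Frobenius condition on $S^!$. Restricting to each internal degree, the dual Koszul complex becomes the complex computing the homology of $S^!$ against its own multiplication, so its acyclicity below the top, together with one-dimensionality of the top cohomology, is equivalent to $\dim_k S^!_n = 1$ and to the nondegeneracy of all multiplication pairings $S^!_i \times S^!_{n-i}\to S^!_n\cong k$. Since a connected graded finite-dimensional algebra $R$ with top degree $n$ is Frobenius exactly when $\dim_k R_n = 1$ and these pairings are perfect --- the associative nondegenerate form being $(a,b)\mapsto \pi_n(ab)$ for $\pi_n$ the projection onto $R_n$ --- this gives both implications at once: the Gorenstein condition produces a Frobenius form on $S^!$, and conversely a Frobenius form on $S^!$ makes the dual Koszul complex acyclic except at the top with cohomology $k(n)$, recovering the Gorenstein condition. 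The ``in particular'' clause then follows immediately, since a quantum polynomial algebra is Koszul of finite global dimension by Theorem \ref{thm.qpak} and satisfies the Gorenstein condition by definition.

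The step I expect to be the main obstacle is the middle one: carrying out $\uHom_S(-,S)$ on the Koszul terms with all internal shifts tracked correctly, and verifying that the resulting differentials are genuinely dual to the multiplication of $S^!$. Only once this bookkeeping is pinned down does the phrase ``cohomology concentrated in top degree equal to $k(n)$'' become literally the same statement as ``the multiplication pairing of $S^!$ is perfect,'' rather than merely an analogy; everything else reduces to the standard characterization of graded Frobenius algebras.
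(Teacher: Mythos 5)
You have chosen the right architecture, but before comparing: the paper itself does not prove this statement --- it imports it from [Sm96, Proposition 5.10] --- so your proposal must be measured against the standard argument, which is indeed the route you sketch: dualize the Koszul resolution $P^i = S\otimes_k (S^!_i)^*$, note that $\gldim S = n$ forces $S^!_i = 0$ for $i>n$ and $S^!_n \neq 0$, and read the Gorenstein condition as saying that the complex $C^\bullet: 0 \to S \to S\otimes_k S^!_1 \to \cdots \to S\otimes_k S^!_n \to 0$ has cohomology $k(n)$ concentrated at the top. All of that is correct.

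The gap is in your crux step, and it is a real one: it is false that ``restricting to each internal degree, the dual Koszul complex becomes the complex computing the homology of $S^!$ against its own multiplication.'' Since $C^i \cong (S\otimes_k S^!_i)(i)$, the internal-degree-$d$ component of $C^\bullet$ is
$$0 \to S_d \to S_{d+1}\otimes_k S^!_1 \to \cdots \to S_{d+n}\otimes_k S^!_n \to 0,$$
which involves \emph{all} graded pieces of the infinite-dimensional algebra $S$, not data of $S^!$ alone (note $\dim_k S_m$ and $\dim_k S^!_m$ are unrelated in general: for $S=k[x_1,\dots,x_n]$ they are $\binom{n+m-1}{m}$ versus $\binom{n}{m}$). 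Consequently ``Gorenstein $\Leftrightarrow$ Frobenius'' is not a formal degree-by-degree dictionary, and the two implications need genuinely different arguments. For ($\Rightarrow$): only the internal degrees $d=-i$ at cohomological position $i$ give conditions purely on $S^!$, namely $H^i(C^\bullet)_{-i} = \{a\in S^!_i \mid aS^!_1 = 0\}$ for $i<n$ and $H^n(C^\bullet)_{-n}=S^!_n$; so Gorenstein yields $\dim_k S^!_n=1$ and that the socle of $S^!$ is concentrated in top degree. From this, induction produces for each $0\neq a\in S^!_i$ an element $b\in S^!_{n-i}$ with $ab\neq 0$, whence $\dim_k S^!_i \leq \dim_k S^!_{n-i}$; replacing $i$ by $n-i$ gives equality, so the pairings are perfect and $S^!$ is Frobenius. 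For ($\Leftarrow$): acyclicity of $C^\bullet$ below the top cannot follow from properties of $S^!$ alone; one uses the perfect pairing $S^!_i \cong (S^!_{n-i})^*$ to identify $C^\bullet$ with the \emph{left} Koszul resolution of $k$ twisted by $(n)$, and then its exactness is precisely left Koszulity of $S$ (Koszulity being left-right symmetric) --- an input about $S$ that you must re-invoke, not a consequence of the Frobenius form. With these two repairs your outline becomes the standard proof, and the ``in particular'' clause is fine exactly as you state it.
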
 

\begin{theorem}\label{thm-4dimfro}
Every $4$-dimensional Frobenius algebra is isomorphic to exactly one of the algebras listed in Table \ref{tab-4dimfro} where $k_\l[x,y]/(x^2,y^2)\cong k_{\l'}[x, y]/(x^2, y^2)$ if and only if $\l'=\l^{\pm 1}$.
\begin{center}
\begin{table}[h]
\begin{threeparttable}
%\small
\centering
\caption{$4$-dimensional Frobenius algebras.} \label{tab-4dimfro}
\begin{tabular}{|c|}
\hline
{\rm $\cG_2$}  \\ \hline 
$k^4$, $k^2\times k[x]/(x^2)$, $k\times k[x]/(x^3)$, $(k[x]/(x^2))^2$, $k[x]/(x^4)$, $k[x,y]/(x^2,y^2) $ \\ 
\hline \hline
{\rm $\cF_2\setminus \cG_2$}  \\ \hline  
$M_2(k)$, $k (\xymatrix{
1 \ar@<0.8ex>[r]^-x & 2 \ar@<0.7ex>[l]^-y
}) \; /(xy, yx)$, 
$k_{-1}[x,y] /(x^2, y^2+yx)$, $k_\l[x,y]/(x^2,y^2)$ where $\l\neq 0, 1$ \\\hline
\end{tabular}
\end{threeparttable}
\end{table}
\end{center}
\end{theorem}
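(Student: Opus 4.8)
The plan is to classify $4$-dimensional Frobenius algebras by reducing to the connected (indecomposable) case and then listing all products. By Lemma \ref{lem-profro}, $R = R_1 \times \cdots \times R_s$ is Frobenius iff each block $R_i$ is, so it suffices to classify the connected Frobenius algebras of dimension $\le 4$ and then form every product whose dimensions sum to $4$. In dimensions $1,2,3$ the only connected ones are $k$, $k[x]/(x^2)$, $k[x]/(x^3)$: in dimension $\le 3$ every Frobenius algebra is commutative (Lemma \ref{lem-3dimfro}), a connected commutative Artinian algebra is local, and the Frobenius condition (simple socle) forces $\rad R$ to be principal, giving $k[x]/(x^m)$. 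Taking products with total dimension $4$ produces exactly the disconnected entries $k^4$, $k^2\times k[x]/(x^2)$, $k\times k[x]/(x^3)$, $(k[x]/(x^2))^2$ of $\cG_2$.

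It remains to find the connected Frobenius algebras of dimension $4$. If $R$ is semisimple then $R = \prod M_{n_i}(k)$ with $\sum n_i^2 = 4$, and connectedness forces a single factor, giving $M_2(k)$ (the case $k^4$ is disconnected). So assume $J := \rad R \ne 0$ and write $R = kQ/I$ with $Q$ connected; by Lemma \ref{lem.Gr}, $Q$ is path connected. A dimension count bounds the number of vertices: a path-connected quiver on $n\ge 2$ vertices has at least $n$ arrows, so $n=3$ would force $\dim J \ge 3$ and $\dim R \ge 6$. If $Q$ has two vertices then $R/J = k\times k$, $R$ is basic, path-connectedness forces one arrow in each direction, and since $I$ is admissible the only dimension-$4$ possibility is $R = k(\xymatrix{1 \ar@<0.8ex>[r]^-x & 2 \ar@<0.7ex>[l]^-y})/(xy,yx)$, which one checks is Frobenius. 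The remaining case is $Q$ with one vertex, i.e. $R$ local.

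For $R$ local of dimension $4$, being Frobenius is equivalent to $\dim_k\soc R = 1$. Let $e=\dim_k J/J^2$ be the embedding dimension. If $e=3$ then $J^2=0$ and $\soc R = J$ is $3$-dimensional, contradicting Frobenius; if $e=1$ then $J$ is principal and $R\cong k[x]/(x^4)$. If $e=2$ then $\dim J=3$, $\dim J^2=1$, $J^3=0$; choosing lifts $x,y$ of a basis of $J/J^2$ and a basis $z$ of $J^2$, the products $x^2,xy,yx,y^2$ are scalar multiples of $z$ while $zJ=Jz=0$, so $R$ is the \emph{graded} quadratic algebra with $H_R(t)=1+2t+t^2=(1+t)^2$. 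The multiplication defines a bilinear form $\mu\colon (J/J^2)\times (J/J^2)\to J^2\cong k$, and the computation of $\soc R$ shows $R$ is Frobenius iff $\mu$ is nondegenerate. Since any algebra isomorphism preserves $J$ and $J^2$, it acts on $\gr R$ by a congruence $P\in \GL(J/J^2)$ together with a rescaling of $J^2$, and conversely; thus the local embedding-dimension-$2$ Frobenius algebras correspond bijectively to nondegenerate bilinear forms on $k^2$ up to congruence and scaling.

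The classification of these forms is the technical heart, and I expect it to be the main obstacle. Using the cosquare $C=(M^{\trans})^{-1}M$ of the matrix $M$ of $\mu$, which is a conjugacy invariant of the congruence-and-scaling class, one finds the possibilities: $C=I$ (symmetric $\mu$) gives the commutative $k[x,y]/(x^2,y^2)$; a semisimple $C$ with eigenvalues $\{\lambda,\lambda^{-1}\}$ gives $k_\lambda[x,y]/(x^2,y^2)$, with $\lambda=-1$ (the antisymmetric form, $C=-I$) included and the pairing $\lambda\leftrightarrow\lambda^{-1}$ accounting for the stated isomorphism $k_\lambda\cong k_{\lambda^{-1}}$; and a nontrivial Jordan block, which a direct computation excludes for eigenvalue $1$ but permits for eigenvalue $-1$, yielding the single extra algebra $k_{-1}[x,y]/(x^2,y^2+yx)$. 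Conceptually these four are exactly the quadratic duals of the dimension-$2$ algebras with $H=1/(1-t)^2$ in Table \ref{tab-qaxyh} that satisfy the Gorenstein condition, i.e. the duals of the $2$-dimensional quantum polynomial algebras $k_J[x,y]$ and $k_\lambda[x,y]$ (via Theorems \ref{thm.KoHi}, \ref{thm.FAS} and Example \ref{exm-2dimqpa2}), and I would cross-check the bilinear-form list against these duals. Assembling the connected algebras $M_2(k)$, the $2$-vertex algebra, $k[x]/(x^4)$, $k[x,y]/(x^2,y^2)$, $k_\lambda[x,y]/(x^2,y^2)$, $k_{-1}[x,y]/(x^2,y^2+yx)$ with the four products above gives precisely Table \ref{tab-4dimfro}, and the cosquare invariant yields the asserted uniqueness.
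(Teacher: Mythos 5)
Your proposal is correct, and its outer skeleton (splitting off direct products via Lemmas \ref{lem-profro} and \ref{lem-3dimfro}, using Lemma \ref{lem.Gr} to force path-connected quivers, then treating the semisimple, two-vertex and local cases) matches the paper's case division by $\dim_k E/\rad E$. The genuine divergence is in the decisive local case with $\dim_k \rad E/\rad^2 E=2$. The paper realizes $E$ as a quotient of a quadratic algebra $k\langle x,y\rangle/(h_1,h_2,h_3)$, passes to the quadratic dual $k\langle x,y\rangle/(h)$, invokes the classification in Table \ref{tab-qaxyh}, uses Koszulity and Theorem \ref{thm.KoHi} to see the quotient map is an isomorphism, and decides which candidates are Frobenius conceptually via Theorem \ref{thm.FAS} (Gorenstein condition on the dual); it cites \cite{H} for $\cG_2$ and \cite{BZ} with Example \ref{exm-2dimqpa2} for pairwise non-isomorphism. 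You instead note that such an $E$ is isomorphic to its associated graded algebra, is encoded by a bilinear form $\mu$ on $\rad E/\rad^2 E$, is Frobenius exactly when $\mu$ is nondegenerate (simple socle), and classify the forms up to congruence and scaling by the similarity class of the cosquare. Your route is more elementary and self-contained: it avoids Koszul duality and Smith's theorem, it re-derives the commutative list $\cG_2$ instead of citing \cite{H}, and the cosquare eigenvalues $\{\l,\l^{-1}\}$ yield the uniqueness statement ($k_\l[x,y]/(x^2,y^2)\cong k_{\l'}[x,y]/(x^2,y^2)$ if and only if $\l'=\l^{\pm 1}$) as a computable invariant where the paper appeals to \cite{BZ}. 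What the paper's route buys is reuse of its own machinery---Table \ref{tab-qaxyh} and the duality between quantum polynomial algebras and Frobenius algebras that drives the whole paper---while avoiding the congruence canonical-form theory (exclusion of the Jordan block at eigenvalue $1$, admission at $-1$), which is precisely the ``technical heart'' you flag; your claims there are correct, as one checks directly. If you write this up, make explicit two standard points you use implicitly: a non-semisimple four-dimensional algebra is automatically basic (since $M_2(k)$ already has dimension $4$), which justifies writing $R=kQ/I$; and for a finite-dimensional local algebra, the Frobenius property is equivalent to one-dimensionality of the socle, which underlies both your $e=3$ exclusion and your $e=2$ analysis.
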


\begin{proof} The classification of $\cG_2$ is given in  \cite[Corollary 4.10]{H}, so we will classify $\cF_2\setminus \cG_2$.  If $E\in \cF_2$ is a direct product of two algebras, then $E$ is commutative by Lemma  \ref{lem-profro} and Lemma \ref{lem-3dimfro}, so we exclude this case.  In particular, we exclude the case that $E$ is a basic algebra and the quiver of $E$ is not connected. 

We divide the proof into $4$ cases in terms of the dimension of $E / \rad E$. 

(a) $\dim_k E / \rad E = 4$:   Since $\rad E = 0$, $E$ is semisimple (so that $E$ is a Frobenius algebra by \cite[Proposition IV 6.7]{SY}), so either $E\cong k^4$ (commutative) or $E\cong M_2(k)$. 

(b) $\dim_k E / \rad E = 3$:  Since $E / \rad E \cong k^3$, $E$ is a basic algebra.  The quiver $Q$ of $E$ has $3$ vertices with one arrow, so $Q$ is not connected. 

(c) $\dim_k E / \rad E = 2$:  Since $E / \rad E \cong k^2$, $E$ is a basic algebra.  Since the quiver $Q$ of $E$ is connected, it has two vertices with two arrows, 
so $Q$ is one of the following.
\begin{itemize}
\item[(i)] $$
\xymatrix{
1 \ar[r] & 2 \ar@(ur,dr)
} $$
\item[(ii)] $$
\xymatrix{
1 & 2 \ar@(ur,dr) \ar[l] 
} $$
\item[(iii)] $$
\xymatrix{
1 \ar@<0.8ex>[r] \ar@<-0.8ex>[r]& 2
}
$$
\item[(iv)] $$
\xymatrix{
1 \ar@<0.8ex>[r] & 2 \ar@<0.7ex>[l]
}
$$
\end{itemize}
By Lemma \ref{lem.Gr}, 
(iv) is the only possible quiver for a Frobenius algebra. Since $\dim_k E = 4$, 
$$
E \cong 
k (\xymatrix{
1 \ar@<0.8ex>[r]^-x & 2 \ar@<0.7ex>[l]^-y
}) \; /(xy, yx),
$$
which is known to be a Frobenius algebra (\cite[Example  IV 7.5]{SY}).
		
(d) $\dim_k E / \rad E = 1$:  Since $E / \rad E \cong k$, $E$ is a basic algebra. 
The quiver $Q$ of $E$ has one vertex with $i$ arrows where $i = 1, 2, 3$. 
If $i = 1$, then $E \cong k[x]/I$ is commutative. If $i = 3$, then
$$
E \cong k\langle x,y,z\rangle/(x^2,y^2,z^2, xy, yx, xz, zx, yz, zy) = k [x,y,z]/(x^2,y^2,z^2, xy, xz, yz)
$$ 
is commutative. If $i = 2$, then $E \cong T/I$ for some admissible ideal $I\lhd T=k\<x, y\>$.  Since $I\subset T_{\geq 2}$, 
$$T_0\oplus T_1\oplus (T_2/(T_2\cap I))\subset T/I$$ as vector spaces.  Since $\dim _k(T_0\oplus T_1)=3$, 
$\dim _k(T_2/(T_2\cap I))\leq 1$, so $\dim _k(T_2\cap I)\geq 3$.  It follows that there are a linearly independent elements $h_1, h_2, h_3\in T_2\cap I$ and a surjective algebra homomorphism $\phi:R=T/(h_1, h_2, h_3)\to E$. 
Recall that, for quadratic algebras $A, A'$,  $A\cong A'$ as graded algebras if and only if $A^!\cong {A'}^!$ as graded algebras (cf. \cite [Lemma 4.1]{MU2}). Since $(T/(h_1, h_2, h_3))^!\cong T/(h)$ where $0\neq h\in T_2$ is isomorphic as graded algebras to one of the graded algebras in Example \ref{exm-2dimqpa2}, 
$T/(h_1, h_2, h_3)$ is isomorphic to one of the following graded algebras: 
\begin{itemize}
\item[(i)] $R_1=(k\<x, y\>/(x^2))^!\cong k[x, y]/(xy, y^2)$,  
\item[(ii)] $R_2=(k\<x, y\>/(xy))^!\cong k\<x, y\>/(x^2, yx, y^2)$,
\item[(iii)] $R_3=k_J[x, y]^!\cong k_{-1}[x, y]/(x^2, y^2+yx)$,   
\item[(vi)] $R_4(\l)=k_{-\l^{-1}}[x, y]^!\cong k_{\l}[x, y]/(x^2, y^2), \, \l\neq 0$.  
\end{itemize} 
We exclude $R_1$ since it is commutative.  By Example \ref{exm-2dimqpa2}, $k\<x, y\>/(xy), k_J[x, y], k_{\l}[x, y]\; (\l\neq 0)$ are Koszul algebras of global dimension 2 with Hilbert series $1/(1-t)^2$, so $R_2, R_3, R_4(\l)$ are 4-dimensional algebras by Theorem \ref{thm.KoHi}, hence $\phi$ is an isomorphism.  Moreover, by Example \ref{exm-2dimqpa2}, $k\<x, y\>/(xy)$ does not satisfy Gorenstein condition, while $k_J[x, y], k_{\l}[x, y]\; (\l\neq 0)$ satisfy Gorenstein condition, so we can conclude that $R_2$ is not Frobenius, while $R_3, R_4(\l)$ are Frobenius by Theorem \ref{thm.FAS}.

By  Example \ref{exm-2dimqpa2}, any two algebras in Table \ref{tab-4dimfro} are not isomorphic (even as ungraded algebras by \cite{BZ}) to each other except that $k_\l[x,y]/(x^2,y^2)\cong k_{\l'}[x, y]/(x^2, y^2)$ if and only if $\l'=\l^{\pm 1}$.
\end{proof}

We will now classify 4-dimensional Frobenius algebras up to derived equivalence. 

\begin{lemma}  [{\cite[Proposition 9.2]{R}}] \label{lem.ee'c} 
Let $R$, $R'$ be rings.  If $\cD^b(\mod R)\cong \cD^b(\mod R')$, then $Z(R)\cong Z(R')$.  
\end{lemma}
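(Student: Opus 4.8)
The plan is to recover the center $Z(R)$ intrinsically from the triangulated category $\cD^b(\mod R)$, so that any triangulated equivalence automatically transports it. For a $k$-linear triangulated category $\cT$ with shift $[1]$, let $Z(\cT)$ denote its graded center in degree $0$: the ring of natural transformations $\eta\colon \id_\cT\to \id_\cT$ satisfying $\eta_{X[1]}=\eta_X[1]$ for every object $X$. (The shift-commuting condition is essential — without it even $\cT=\cD^b(\mod k)$ would have center $\prod_{n}k$, one scalar per shift.) This $Z(\cT)$ is manifestly invariant under triangulated equivalence: given $F\colon \cT\to \cT'$ with quasi-inverse $G$ and coherence isomorphism $FG\cong\id_{\cT'}$, conjugation $\eta\mapsto F(\eta_{G(-)})$ defines a ring isomorphism $Z(\cT)\xrightarrow{\sim}Z(\cT')$ with inverse induced by $G$. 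Since a derived equivalence is an equivalence of triangulated categories, this would at once give $Z(\cD^b(\mod R))\cong Z(\cD^b(\mod R'))$.

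The substantive task is then to prove $Z(R)\cong Z(\cD^b(\mod R))$. First I would build a ring homomorphism $\Phi\colon Z(R)\to Z(\cD^b(\mod R))$: a central $z$ acts on any complex $X$ by degreewise multiplication, and centrality makes this a chain map, natural in $X$ and commuting with $[1]$, so $\Phi(z)$ is a legitimate element of $Z(\cD^b(\mod R))$. Injectivity is immediate by evaluating at the stalk complex $R$ in degree $0$, since $z=\Phi(z)_R(1)$. For surjectivity I would take $\eta\in Z(\cD^b(\mod R))$ and restrict to $R$: as $\End_{\cD^b(\mod R)}(R)\cong R$ acts by left multiplication, $\eta_R=L_z$ for some $z\in R$, and naturality with respect to every $L_a\colon R\to R$ forces $za=az$ for all $a$, i.e. $z\in Z(R)$. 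Replacing $\eta$ by $\eta-\Phi(z)$, it remains to show that $\eta_R=0$ implies $\eta=0$.

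The hard part is exactly this vanishing — the d\'evissage from the free module $R$ to an arbitrary bounded complex. The naive argument ``$R$ generates $\cD^b(\mod R)$ as a thick subcategory'' is unavailable, since that thick subcategory is only the category $\operatorname{per} R$ of perfect complexes, which is properly smaller than $\cD^b(\mod R)$ exactly when $R$ has infinite global dimension — the case for every non-semisimple self-injective (Frobenius) algebra relevant to this paper. One is therefore forced to propagate the vanishing of $\eta_R$ through the standard $t$-structure and through free presentations $R^b\twoheadrightarrow M$, using naturality and the resulting triangles $K\to R^b\to M\to K[1]$. The subtlety, which I expect to be the main obstacle, is that any single such triangle (with $\eta$ vanishing on two of its terms) only forces the third component of $\eta$ to be nilpotent rather than zero; extracting genuine vanishing requires iterating the construction and exploiting the full morphism-of-triangles structure. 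This is the technical core of Rickard's argument. As an alternative that sidesteps the d\'evissage, over a field one may instead use that every derived equivalence of finite-dimensional algebras is standard — realized by a two-sided tilting complex $X$ with inverse $Y$ satisfying $X\lotimes_{R'}Y\cong R$ in $\cD^b(R\otimes_k R^{\op})$ — and transport the ring $Z(R)=\operatorname{HH}^0(R)=\Hom_{\cD(R\otimes_k R^{\op})}(R,R)$ through conjugation by $X$ and $Y$.
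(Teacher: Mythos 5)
First, a point of reference: the paper offers no proof of this lemma at all --- it is quoted verbatim from Rickard \cite[Proposition 9.2]{R} --- so your attempt has to be judged on whether it stands on its own as a proof. It does not. Your setup is fine: the degree-zero shift-compatible centre $Z(\cT)$ of a triangulated category is indeed invariant under triangulated equivalence, the map $\Phi\colon Z(R)\to Z(\cD^b(\mod R))$ is a well-defined injective ring map, and restriction of any $\eta$ to the object $R$ lands in $Z(R)$ by naturality against the left multiplications $L_a$. But the entire content of the lemma is then concentrated in the step you explicitly leave open: that $\eta_R=0$ forces $\eta=0$. Your own diagnosis of why this is hard is accurate and is precisely why the approach breaks down. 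If $\eta$ vanishes on two terms of a triangle $X\xrightarrow{f}Y\xrightarrow{g}Z\xrightarrow{h}X[1]$, naturality gives $\eta_Z\circ g=0$ and $h\circ\eta_Z=0$, hence two factorizations $\eta_Z=\phi\circ h=g\circ\psi$, and then $\eta_Z^2=\phi\,(hg)\,\psi=0$ because $hg=0$; nothing in the triangulated structure upgrades ``square-zero'' to ``zero'', no matter how many triangles one iterates. (Vanishing does propagate to stalk complexes: from $K\to R^b\to M\xrightarrow{w}K[1]$ one gets $\eta_M=\phi\circ w$ with $\phi\in\Hom(K[1],M)=\Ext^{-1}_R(K,M)=0$; the obstruction lives on genuinely multi-term complexes, and it stays there.) This is not a routine technicality you may defer: whether the degree-zero shift-compatible centre of $\cD^b(\mod R)$ coincides with $Z(R)$, i.e.\ whether such nilpotent ``ghost'' central elements exist, is exactly the delicate issue in the study of graded centres of triangulated categories (Krause--Ye), and it is not settled by d\'evissage from a generator --- note that $R$ does not even generate $\cD^b(\mod R)$ as a thick subcategory when $\gldim R=\infty$, as you say, but the nilpotency obstruction already defeats the argument on the perfect complexes that $R$ does generate. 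So your main route is not an incomplete proof of the lemma; it is a reduction of the lemma to a statement you cannot establish and which may fail.

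Your fallback --- transporting $Z(R)=\operatorname{HH}^0(R)$ through a two-sided tilting complex --- is a legitimate and standard strategy, but as written it is a one-sentence appeal to theorems (Rickard's construction of two-sided tilting complexes from one-sided ones, derived invariance of Hochschild cohomology) that are substantially deeper than the citation being replaced, and it is not carried out. It also requires the rings to be (flat) algebras over a base, so it would cover the finite-dimensional Frobenius algebras to which the paper applies the lemma, but not the arbitrary rings $R$, $R'$ in the statement. In short: correct framing, correct identification of the obstruction, but the obstruction \emph{is} the theorem, and it remains unproved here; the honest conclusion of your proposal is that one should do what the paper does, namely cite Rickard.
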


\begin{lemma}[{\cite[Corollary 5.8]{CJ}}] \label{lem-nptme}
Let $R$, $R'$ be finite dimensional algebras such that $R$ is self-injective and its Nakayama permutation is transitive.  
Then $\cD^b(\mod R) \cong \cD^b(\mod R')$ if and only if $\mod R\cong \mod R'$.
\end{lemma}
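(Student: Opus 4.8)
The plan is to prove the two implications separately, the forward one being formal and the reverse one carrying all of the content. The ``if'' direction is immediate: a Morita equivalence $\mod R\cong \mod R'$ is realized by tensoring with a bimodule that is a progenerator on each side, and such a functor is exact and preserves projectives, so it extends termwise to a triangle equivalence $\cD^b(\mod R)\cong \cD^b(\mod R')$. I would therefore reduce at once to the ``only if'' direction and assume a triangle equivalence $\Phi\colon \cD^b(\mod R)\xrightarrow{\sim}\cD^b(\mod R')$ is given.

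First I would record the invariance properties that may be used. Self-injectivity coincides with being Iwanaga--Gorenstein of Gorenstein dimension $0$, and Gorenstein dimension is a derived invariant; hence $R'$ is again self-injective. The perfect complexes $\cD^b(\proj R)$ and $\cD^b(\proj R')$ are the thick subcategories generated by the respective algebras, so they are intrinsic to the triangulated structure and $\Phi$ descends to a triangle equivalence of Verdier quotients $\cD^b(\mod R)/\cD^b(\proj R)\xrightarrow{\sim}\cD^b(\mod R')/\cD^b(\proj R')$. For a self-injective algebra this quotient is the stable module category, so by Rickard's theorem the derived equivalence induces a stable equivalence $\underline\Phi\colon \underline{\mod}R\xrightarrow{\sim}\underline{\mod}R'$, which one verifies to be of Morita type (given by bimodules that are projective on one side).

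The heart of the argument is to promote $\underline\Phi$ to an honest Morita equivalence, and this is exactly where transitivity enters. The strategy is to show that $\underline\Phi$ carries simple $R$-modules to simple $R'$-modules. Here one exploits that a stable equivalence of Morita type commutes with the syzygy functor $\Omega$ and intertwines the Nakayama functors, hence respects the action of the Nakayama permutation on the simples; since the Nakayama permutation of $R$ is transitive, all simple $R$-modules lie in a single orbit under the Nakayama functor $\nu$, and their images therefore lie in a single orbit in $\underline{\mod}R'$. A minimality argument within this common orbit (the image of a simple has smallest ``stable length'' among modules in its orbit) then forces each simple to be sent to a simple. Once $\underline\Phi$ preserves simples, the standard fact that a stable equivalence of Morita type between algebras with no semisimple blocks that sends simples to simples is induced by a Morita equivalence yields $\mod R\cong \mod R'$, completing the proof.

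The main obstacle is this final step, the promotion of the stable equivalence to a Morita equivalence: without transitivity there genuinely exist self-injective algebras that are derived (hence stably) equivalent but not Morita equivalent, so the transitivity must be used in an essential way, precisely through the control it gives over the $\nu$-orbits of the simples and thereby over the images of simples under $\underline\Phi$. By contrast, the preceding reductions---the trivial ``if'' direction, the preservation of self-injectivity under derived equivalence, and Rickard's passage to a stable equivalence of Morita type---are comparatively routine.
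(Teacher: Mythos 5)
A preliminary remark on the comparison itself: the paper does not prove this lemma at all --- it is imported verbatim from \cite{CJ}, so there is no internal proof to measure your argument against, and it must stand on its own. Its routine parts do stand: the ``if'' direction, the derived invariance of self-injectivity, and Rickard's theorem that a derived equivalence between self-injective algebras yields a stable equivalence of Morita type $\underline{\Phi}\colon \underline{\mod}\,R\to \underline{\mod}\,R'$ are all correct and standard. The genuine gap is exactly at the step you yourself call the heart of the proof: the claim that $\underline{\Phi}$ carries simples to simples, which you support only by an undefined ``minimality'' principle.

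That claim is false under the very hypotheses of the lemma, so no argument can establish it as stated. Take $R=R'=k[x]/(x^3)$: local and self-injective, so its Nakayama permutation is trivially transitive. The shift $[1]$ is a derived self-equivalence of $\cD^b(\mod R)$, and the stable equivalence of Morita type it induces is the cosyzygy functor $\Omega^{-1}$ (given by the first cosyzygy of the regular bimodule, which is projective on each side); it sends the simple module $k$ to $\Omega^{-1}k\cong k[x]/(x^2)$, which is not simple. Worse for your mechanism: this $R$ is symmetric, so every $\nu$-orbit is a singleton, the images of the simples do lie in a single $\nu_{R'}$-orbit, and the object of smallest length in that orbit is $k[x]/(x^2)$ itself --- not a simple. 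So the orbit bookkeeping that transitivity buys you (which is correct, since $\nu\cong\tau\Omega^{-2}$ and stable equivalences of Morita type commute with $\tau$ and $\Omega$) cannot by itself force images of simples to be simple: a $\nu_{R'}$-orbit consists of arbitrary stable objects, need not contain any simple, and stable equivalences of Morita type give no control over composition length. At minimum one must normalize $\underline{\Phi}$ by a power of $\Omega$, and then the entire difficulty --- untouched in your write-up --- is to show that some normalization sends one (hence, by $\nu$-equivariance and transitivity, every) simple to a simple; this is precisely what fails for derived equivalent, non-Morita-equivalent Brauer tree algebras, so transitivity must enter beyond orbit-counting. For contrast, correct proofs of statements of this type (already in the classical local case) bypass Linckelmann's theorem entirely: one shows that any tilting complex $T$ over $R'$ with $\End(T)\cong R$ is a shifted progenerator, combining the fact that $\nu_{R'}$ permutes the indecomposable summands of $T$ according to the Nakayama permutation of $R$ (so transitivity forces all summands of a minimal representative to occupy the same degrees) with a Hom-vanishing argument --- a component map from a bottom term onto a common indecomposable summand of a top term cannot be null-homotopic since differentials of a minimal complex land in the radical --- and the non-local case additionally needs the derived invariance of the cycle type of the Nakayama permutation, a nontrivial point in its own right and part of the content of \cite{CJ}.
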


\begin{lemma}[{\cite[Proposition  II 6.20]{SY}}] \label{lem-baiffme}
Let $R$, $R'$ be basic finite dimensional algebras. Then $\mod R\cong \mod R'$ if and only if $R\cong R'$. 
\end{lemma}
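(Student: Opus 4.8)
The plan is to invoke Morita theory, since the hypothesis $\mod R\cong \mod R'$ is precisely the statement that $R$ and $R'$ are Morita equivalent. The nontrivial implication is ``only if''; the converse is immediate, as any isomorphism $R\cong R'$ induces an equivalence $\mod R\cong \mod R'$ by transport of structure.

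First I would recall the structure theorem underlying Morita theory: every equivalence $\mod R\to \mod R'$ is realized as $\Hom_R(P,-)$ for a progenerator $P$ of $\mod R$, i.e.\ a finitely generated projective generator, and under such an equivalence $R'\cong \End_R(P)$. Decomposing $P$ by Krull--Schmidt as $P\cong \bigoplus_{i=1}^n P_i^{\oplus m_i}$, where $P_1,\dots,P_n$ are the pairwise non-isomorphic indecomposable projective right $R$-modules, the condition that $P$ be a generator is exactly that each multiplicity satisfies $m_i\geq 1$.

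Next I would use the basic hypothesis. By definition $R$ being basic means $R/\rad R\cong k^n$, equivalently the indecomposable projectives $P_1,\dots,P_n$ are pairwise non-isomorphic and the regular module decomposes as $R_R\cong \bigoplus_{i=1}^n P_i$ with every summand of multiplicity one. Setting $P_0:=\bigoplus_{i=1}^n P_i$, the algebra $\End_R(P_0)$ is basic and depends, up to isomorphism, only on the equivalence class of $\mod R$; it is the (unique) basic algebra attached to the Morita class of $R$. Since $R$ is itself basic, $R\cong \End_R(R_R)=\End_R(P_0)$, so $R$ coincides with its own basic algebra, and likewise for $R'$.

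Finally I would assemble the pieces: Morita equivalence of $R$ and $R'$ forces their basic algebras to be isomorphic, and since each of $R,R'$ is its own basic algebra we conclude $R\cong R'$. I expect the main obstacle to be purely a matter of careful bookkeeping --- verifying that the passage $P\mapsto P_0$, collapsing all multiplicities to one, is well defined on Morita classes and really produces a basic algebra. Over the algebraically closed field $k$ this is transparent, since ``basic'' literally means that distinct indecomposable projectives are non-isomorphic, and the multiplicities $m_i$ are read off unambiguously from the Krull--Schmidt decomposition of $P$.
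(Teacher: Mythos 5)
The paper does not prove this lemma at all: it is quoted verbatim from \cite[Proposition II 6.20]{SY}, so there is no internal proof to compare against. Your Morita-theoretic argument --- realizing the equivalence via a progenerator, using Krull--Schmidt to identify the basic algebra $\End_R(P_0)$ as a Morita invariant, and noting that a basic algebra is isomorphic to $\End_R(R_R)=\End_R(P_0)$, hence to its own basic algebra --- is correct and is precisely the standard proof that the cited reference gives; the only step you assert rather than verify (that an equivalence $\mod R\cong \mod R'$ carries indecomposable projectives bijectively to indecomposable projectives, so that $\End_R(P_0)\cong\End_{R'}(P_0')$) is routine.
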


\begin{remark}\label{rem.tN} 
The Nakayama permutation of $k (\xymatrix{
1 \ar@<0.8ex>[r]^-x & 2 \ar@<0.7ex>[l]^-y }) \; /(xy, yx)$ is transitive by {\cite[Example  IV 7.5]{SY}}.  
\end{remark}

\begin{theorem} \label{thm-de4dimfro}
For $4$-dimensional Frobenius algebras $E$, $E'$, the following are equivalent:
\begin{itemize}
\item [(1)] $E \cong E'$.
\item [(2)] $\mod E\cong \mod E'$.  
\item [(3)] $ \cD^b(\mod E)\cong \cD^b(\mod E')$.
\end{itemize}
\end{theorem}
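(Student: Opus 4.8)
The plan is to treat $(1)\Rightarrow(2)\Rightarrow(3)$ as the formal, content-free direction and to concentrate all the work on $(3)\Rightarrow(1)$. Indeed, an algebra isomorphism $E\cong E'$ induces an equivalence $\mod E\cong\mod E'$, and any such equivalence extends to an equivalence of bounded derived categories $\cD^b(\mod E)\cong\cD^b(\mod E')$; these two steps I would dispatch in a line each. The substance is to show that $\cD^b(\mod E)\cong\cD^b(\mod E')$ implies $E\cong E'$, and the strategy is to run through the finite list produced by \thmref{thm-4dimfro} and verify that distinct entries can never be derived equivalent.

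First I would separate the commutative from the non-commutative algebras using the center. By \lemref{lem.ee'c} a derived equivalence yields $Z(E)\cong Z(E')$, so in particular $\dim_k Z(E)=\dim_k Z(E')$. Since $\dim_k E=\dim_k E'=4$ by hypothesis and an algebra is commutative precisely when its center equals the whole algebra, this forces $E$ to be commutative if and only if $E'$ is. When both are commutative, $Z(E)=E$ and $Z(E')=E'$, so $Z(E)\cong Z(E')$ already gives $E\cong E'$; this disposes of all six algebras in $\cG_2$ at once.

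It then remains to treat the non-commutative case, where by \thmref{thm-4dimfro} both $E$ and $E'$ lie in $\{M_2(k),\ \Lambda,\ R_3,\ R_4(\lambda)\}$, writing $\Lambda:=k(\xymatrix{1 \ar@<0.8ex>[r]^-x & 2 \ar@<0.7ex>[l]^-y})/(xy,yx)$, $R_3:=k_{-1}[x,y]/(x^2,y^2+yx)$ and $R_4(\lambda):=k_\lambda[x,y]/(x^2,y^2)$. Each of these is self-injective because it is Frobenius, and each has a transitive Nakayama permutation: $M_2(k)$ has a single simple module, while $R_3$ and $R_4(\lambda)$ are local, so in all three cases the permutation acts on a one-element set, and $\Lambda$ is covered by \remref{rem.tN}. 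Hence \lemref{lem-nptme} applies and upgrades the hypothesis to a Morita equivalence $\mod E\cong\mod E'$. To finish, I would observe that $M_2(k)$ is the unique semisimple algebra among these four; since Morita equivalence preserves semisimplicity, $E=M_2(k)$ forces $E'=M_2(k)$ and conversely. The three remaining algebras $\Lambda, R_3, R_4(\lambda)$ are basic, so \lemref{lem-baiffme} converts $\mod E\cong\mod E'$ into $E\cong E'$, which for the family $R_4(\lambda)$ reproduces the isomorphism criterion $\lambda'=\lambda^{\pm1}$ recorded in \thmref{thm-4dimfro}.

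The hard part is precisely this last, non-commutative case: the only invariant handed to us for free by a derived equivalence is the center, and it is too coarse here, since $R_3$ and every $R_4(\lambda)$ share the center $k[t]/(t^2)$ and thus cannot be told apart this way. The decisive move is therefore to descend from a derived to a Morita equivalence via \lemref{lem-nptme}; the one genuine verification this requires is the transitivity of the Nakayama permutation, which is automatic for the local algebras and for $M_2(k)$ and is exactly what \remref{rem.tN} supplies for $\Lambda$. Once that descent is in place, \lemref{lem-baiffme} together with the bookkeeping of \thmref{thm-4dimfro} leaves no room for any further coincidences.
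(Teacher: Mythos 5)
Your proof is correct and follows essentially the same route as the paper: reduce to (3) $\Rightarrow$ (1), use the center via Lemma~\ref{lem.ee'c} to settle the commutative case and to separate commutative from noncommutative, then descend from derived equivalence to Morita equivalence via Lemma~\ref{lem-nptme} and finish with Lemma~\ref{lem-baiffme} and the list in Theorem~\ref{thm-4dimfro}. The only (harmless) divergence is your treatment of $M_2(k)$: the paper disposes of it by a separate argument through $\cD^b(\mod M_2(k))\cong \cD^b(\mod k)$, whereas you fold it into the Nakayama-permutation framework (transitivity being trivial for an algebra with one simple) and then rule out cross-cases using the fact that Morita equivalence preserves semisimplicity — both are valid.
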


\begin{proof} It is enough to show (3) $\Rightarrow$ (1).  

Suppose that $E$ is commutative.  If $\cD^b(\mod E)\cong \cD^b(\mod E')$, then $Z(E')\cong Z(E)=E$ by Lemma \ref{lem.ee'c}.  Since $\dim _kZ(E')=\dim _kE=4=\dim_kE'$, $Z(E')=E'$.  

If $E$ is not commutative, then either $E$ is basic, or else $E\cong M_2(k)$ by Theorem \ref{thm-4dimfro}. If $E$ is basic, then it is a basic self-injective algebra with transitive Nakayama permutation (\cite[Proposition IV 3.8]{SY}, Theorem \ref{thm-4dimfro}, Remark \ref{rem.tN}), so $\cD^b(\mod E)\cong \cD^b(\mod E')$ if and only if $E \cong E'$
 by Lemma \ref{lem-nptme} and Lemma \ref{lem-baiffme}.  Suppose $E\cong M_2(k)$.  If $\cD^b(\mod E')\cong \cD^b(\mod E)\cong \cD^b(\mod k)$, then $E'\cong M_2(k)\cong E$, hence the result.
\end{proof}

\begin{remark}  For $A, A'\in \cA_{3, 1}$, Theorem \ref{thm-de4dimfro} shows that $\uCM^{\ZZ}(A)\cong \uCM^{\ZZ}(A')$ if and only if  $C(A)\cong C(A')$ (see \cite[Proposition 5.2]{SV}). 
\end{remark} 

\section{Main results} \label{sec-main}

In this last section, we prove our main result, the classification of noncommutative affine pencils of conics is the same as the classification of $4$-dimensional Frobenius algebras.  We prove it in the commutative case and in the noncommutative case separately.   In the commutative case, we use the techniques of homogenization and dehomogenization.

\begin{definition} Let $S=k[x_1, \dots, x_n]$.  
\begin{enumerate}
\item{} (homogenization) For $f\in S$ such that $\deg f=d$, we define 
$$
f^z:=f(x_1z^{-1}, \dots, x_nz^{-1})z^d\in S[z]_d. 
$$  
For a sequence $F = (f_1, \dots, f_m)$ in  $S$, we define 
$$
F^z:=((f_1)^z, \dots, (f_m)^z),
$$ 

and $\sH^z(F):=S[z]/I_{F^z}$.  
\item{}  (dehomogenization) For $f\in S[z]$, we define 
$$
f_z:=f(x_1, \dots, x_n, 1)\in S.
$$  

For a sequence $F = (f_1, \dots, f_m)$ in $S[z]$, we define
$$
F_z = ((f_1)_z, \dots, (f_m)_z), 
$$

and $\sD_z(F):=S/I_{F_z}$.  

\end{enumerate} 
\end{definition}  

If $f=\sum_{i=0}^df_i\in S$ such that $\deg f=d$  
where $f_i\in S_i$, then $f^z=\sum_{i=0}^df_iz^{d-i}\in S[z]_d$.   
The following lemma is well-known (and easy to check): 

\begin{lemma} \label{lem.ddz}  Let $S=k[x_1, \dots, x_n]$.  
\begin{enumerate}
\item{} For a sequence  $F $ in  $S$, $(F^z)_z=F$.
\item{}
Let $H$ be a homogeneous sequence in $S[z]$, and $B=S[z]/I_H$ a graded algebra.  If $z\in B_1$ is regular, then $\sD_z(H)\cong B[z^{-1}]_0$.  
\end{enumerate}
\end{lemma}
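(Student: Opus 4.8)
The plan is to treat part (1) as a one-line substitution and to reduce part (2) to a clean identification of degree-zero parts after inverting $z$; part (1) then feeds directly into the verification in part (2).

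For part (1), write $f=\sum_{i=0}^d f_i$ with $f_i\in S_i$ and $d=\deg f$. The formula $f^z=\sum_{i=0}^d f_i z^{d-i}$ recorded just before the lemma gives $(f^z)_z=f^z(x_1,\dots,x_n,1)=\sum_{i=0}^d f_i=f$ at once, and applying this to each entry of a sequence $F$ yields $(F^z)_z=F$. All the substance is in part (2).

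For part (2), I would first pass to the localization. Since $S[z]$ is commutative and localization is flat, hence commutes with quotients, $B[z^{-1}]=(S[z]/I_H)[z^{-1}]\cong S[z][z^{-1}]/(H)$, where $(H)$ is the ideal generated by the homogeneous entries of $H$ in $S[z][z^{-1}]$. This ideal is graded, so $B[z^{-1}]$ is $\ZZ$-graded and its degree-zero part is $B[z^{-1}]_0=(S[z][z^{-1}])_0/(H)_0$ with $(H)_0:=(H)\cap (S[z][z^{-1}])_0$; the regularity of $z\in B_1$ is what guarantees the localization $B\hookrightarrow B[z^{-1}]$ is faithful, so that $B[z^{-1}]_0$ is the genuine degree-zero slice. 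The crux is then to identify $(S[z][z^{-1}])_0$ with $S$ compatibly with these ideals. Dehomogenization $z\mapsto 1$ extends to a ring homomorphism $\phi\colon S[z][z^{-1}]\to S$, and I would show that its restriction $\psi:=\phi|_0\colon (S[z][z^{-1}])_0\to S$ is a ring isomorphism with set-theoretic inverse $f\mapsto f^z z^{-\deg f}$: the two compositions are identities by part (1) together with the elementary fact that a homogeneous $p\in S[z]$ satisfies $p=z^{\deg p-\deg p_z}(p_z)^z$, so the constructions agree once $z$ is inverted.

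It then remains to check $\psi((H)_0)=(H_z)$. The inclusion $\subseteq$ is immediate from $\phi(h_i)=(h_i)_z$. For $\supseteq$, given $\sum_i b_i(h_i)_z\in (H_z)$ with $b_i\in S$, I would lift each $b_i$ to the homogeneous element $a_i:=(b_i)^z z^{-\deg b_i-d_i}\in (S[z][z^{-1}])_{-d_i}$ with $d_i=\deg h_i$, so that $\phi(a_i)=b_i$ by part (1) while $\sum_i a_i h_i\in (H)_0$; its image under $\psi$ is the prescribed element. Passing to quotients, $\psi$ descends to the desired isomorphism $B[z^{-1}]_0=(S[z][z^{-1}])_0/(H)_0\cong S/(H_z)=\sD_z(H)$. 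I expect the only delicate point to be the bookkeeping in $\psi((H)_0)=(H_z)$, and especially the $\supseteq$ direction, where the homogeneous lifts must be produced in exactly degree $-d_i$ so that each $a_ih_i$ is of degree zero. The underlying subtlety, that $(p_z)^z$ recovers $p$ only up to a power of $z$ rather than on the nose, is precisely what forces the argument to live after inverting $z$, and keeping track of those powers is the main thing to get right.
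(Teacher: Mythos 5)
Your proof is correct. There is nothing in the paper to compare it against: the authors state this lemma as ``well-known (and easy to check)'' and give no proof, so your write-up supplies the omitted verification. Part (1) is indeed the one-line substitution via $f^z=\sum_{i=0}^d f_i z^{d-i}$. In part (2), the chain of identifications is sound: $B[z^{-1}]\cong S[z][z^{-1}]/(H)$ because localization commutes with quotients; the ideal $(H)$ is graded, so $B[z^{-1}]_0=(S[z][z^{-1}])_0/(H)_0$; the map $\psi\colon (S[z][z^{-1}])_0\to S$ induced by $z\mapsto 1$ is a ring isomorphism with inverse $f\mapsto f^z z^{-\deg f}$, using part (1) and the identity $p=z^{\deg p-\deg p_z}(p_z)^z$ for homogeneous $p\in S[z]$; and both inclusions in $\psi((H)_0)=I_{H_z}$ check out, with the lift $a_i=(b_i)^z z^{-\deg b_i-d_i}$ placed in exactly degree $-d_i$ so that $a_ih_i$ has degree zero. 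One remark: the role you assign to the regularity of $z\in B_1$ is spurious. The degree-zero component of $B[z^{-1}]$ is perfectly well defined whether or not $B\to B[z^{-1}]$ is injective, and your argument in fact never uses regularity anywhere; the isomorphism of (2) holds without that hypothesis. This is harmless --- a proof that does not need a hypothesis is still a proof --- and the hypothesis is presumably included because it is what holds in the situations where the lemma is applied later in the paper (e.g.\ in Lemma \ref{lem.DHi} and Theorem \ref{thm.2}), but you should not claim it ``guarantees'' a step that needs no guarantee.
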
 

By the above lemma, we make the following definition by abuse of notations.  

\begin{definition} 
For a commutative graded algebra $B$ finitely generated in degree 1 and a regular element $z\in B_1$, we define $\sD_z(B):=B[z^{-1}]_0$.  
\end{definition} 

\begin{lemma} \label{lem.DHi} 
For a sequence $F$ in $S=k[x_1, \dots, x_n]$,
 if $z\in \sH^z(F)$ is regular, then 
$\sD_z(\sH^z(F))\cong S/I_F$. 
\end{lemma}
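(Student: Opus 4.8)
The plan is to derive the lemma by chaining the two parts of \lemref{lem.ddz} together with the abuse-of-notation definition of $\sD_z$ applied to a graded algebra. The essential observation is that the graded algebra $\sH^z(F) = S[z]/I_{F^z}$ is precisely of the form $B = S[z]/I_H$ covered by \lemref{lem.ddz} (2), with the homogeneous sequence taken to be $H = F^z$. Indeed, $F^z = ((f_1)^z, \dots, (f_m)^z)$ is by construction a homogeneous sequence in $S[z]$, since each $(f_i)^z \in S[z]_{d_i}$ is homogeneous of degree $d_i = \deg f_i$. Moreover $S[z] = k[x_1, \dots, x_n, z]$ is commutative and generated in degree $1$, so the same holds for the quotient $\sH^z(F)$; thus the symbol $\sD_z(\sH^z(F))$ is well-defined and, by its defining convention, equals $\sH^z(F)[z^{-1}]_0$.

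First I would record that, under the standing hypothesis that $z \in \sH^z(F)$ is regular, \lemref{lem.ddz} (2) applied to $B = \sH^z(F)$ and $H = F^z$ yields
$$
\sD_z(F^z) \;\cong\; \sH^z(F)[z^{-1}]_0 \;=\; \sD_z(\sH^z(F)),
$$
where the left-hand side uses the sequence meaning of $\sD_z$, namely $\sD_z(F^z) = S/I_{(F^z)_z}$. Next I would invoke \lemref{lem.ddz} (1), which gives $(F^z)_z = F$ and hence $I_{(F^z)_z} = I_F$, so that $\sD_z(F^z) = S/I_F$. Combining the two identifications gives $\sD_z(\sH^z(F)) \cong S/I_F$, as required.

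The only point requiring care is the bookkeeping between the two distinct meanings of the notation $\sD_z$: on a sequence in $S[z]$ it is $\sD_z(F^z) = S/I_{(F^z)_z}$, whereas on the graded algebra $\sH^z(F)$ it is $\sH^z(F)[z^{-1}]_0$. The bridge between these is exactly \lemref{lem.ddz} (2), which is precisely where the regularity of $z$ enters. Beyond this identification there is no genuine computation, so I do not anticipate a real obstacle; the statement is a formal consequence of the preceding lemma.
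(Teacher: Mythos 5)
Your proof is correct and is essentially identical to the paper's: the paper also proves the lemma by chaining $\sD_z(\sH^z(F))\cong \sD_z(F^z)=S/I_{(F^z)_z}=S/I_F$, i.e.\ Lemma~\ref{lem.ddz}\,(2) followed by Lemma~\ref{lem.ddz}\,(1). Your explicit bookkeeping of the two meanings of $\sD_z$ (on sequences versus on graded algebras) is exactly the implicit content of the paper's one-line argument.
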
 

\begin{proof} By Lemma \ref{lem.ddz},
\begin{equation*}
\sD_z(\sH^z(F))\cong \sD_z(F^z)=S/I_{(F^z)_z}=S/I_F.  \qedhere
\end{equation*} 
\end{proof}

We denote by $\Kdim B$ the Krull dimension of  a commutative graded algebra $B$.

\begin{lemma} \label{lem.wbwd}
For $B=k[x_1, \dots, x_n]/(f_1, \dots, f_{n-1})\in \cB_{n, n-1}$, there exists a regular element $z\in B_1$. 
\end{lemma}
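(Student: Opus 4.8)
The plan is to produce a regular element of $B_1$ by a graded prime-avoidance argument, using that $B$ sits in degree $1$ above all of its associated primes. First I would record what membership in $\cB_{n, n-1}\subseteq \cA_{n, n-1}$ gives us: the sequence $(f_1, \dots, f_{n-1})$ is a homogeneous regular sequence of degree-$2$ elements in the polynomial ring $k[x_1, \dots, x_n]$, which is Cohen-Macaulay of Krull dimension $n$. Cutting a Cohen-Macaulay ring by a regular sequence of length $n-1$ leaves it Cohen-Macaulay and drops the dimension by exactly $n-1$, so $B$ is Cohen-Macaulay with $\Kdim B = 1$. Moreover $B$ is a connected graded $k$-algebra generated in degree $1$, with irrelevant maximal ideal $\fm = B_{\geq 1}$.

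Next I would exploit the structural fact that $\depth B = \Kdim B = 1 > 0$ since $B$ is Cohen-Macaulay; equivalently, $\fm$ is \emph{not} an associated prime of $B$. The associated primes $\fp_1, \dots, \fp_s$ are then finitely many homogeneous primes, each properly contained in $\fm$. Because $B$ is generated in degree $1$, the inclusion $B_1\subseteq \fp_i$ would force $\fm = (B_1)\subseteq \fp_i$, contradicting $\fp_i\subsetneq\fm$; hence each intersection $\fp_i\cap B_1$ is a \emph{proper} $k$-subspace of the finite-dimensional space $B_1$.

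Finally, since $k$ is algebraically closed and therefore infinite, the vector space $B_1$ cannot be covered by the finite union $\bigcup_{i=1}^{s}(\fp_i\cap B_1)$ of proper subspaces, so I would choose any $z\in B_1\setminus\bigcup_{i=1}^{s}\fp_i$. Such a $z$ lies in no associated prime of $B$, hence is a non-zerodivisor, i.e.\ a regular element of $B_1$, as required.

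The step I expect to require the most care is verifying $\depth B>0$, equivalently that $\fm\notin\Ass(B)$. This is precisely where the Cohen-Macaulayness of $B$ (a consequence of $B$ being a polynomial ring modulo a homogeneous regular sequence) is indispensable: without it an associated prime could coincide with the irrelevant ideal $\fm$, and then no element of $B_1$ would be regular, so the degree-$1$ avoidance argument would fail to produce a genuine regular element.
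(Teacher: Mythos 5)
Your proof is correct, and it takes a genuinely different route from the paper's. The paper stays within the theory of homogeneous systems of parameters: since $B$ is Cohen--Macaulay with $\depth B=\Kdim B=1$, the sequence $(f_1,\dots,f_{n-1})$ extends to a homogeneous system of parameters for $k[x_1,\dots,x_n]$; then, because $x_1,\dots,x_n$ is also an h.s.o.p.\ of degree $1$, a lemma of Sturmfels \cite[Lemma 2.3.3]{St} produces a linear form $z=\sum\l_ix_i$ such that $f_1,\dots,f_{n-1},z$ is again an h.s.o.p., and Matsumura's theorem that systems of parameters in Cohen--Macaulay rings are regular sequences \cite[Theorem 17.4]{M} finishes the argument. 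You instead argue directly on $B$: Cohen--Macaulayness gives $\depth B=1>0$, so the irrelevant ideal $\fm=B_{\geq 1}$ is not an associated prime; the finitely many (homogeneous, hence proper in $\fm$) associated primes then meet $B_1$ in proper subspaces because $B$ is generated in degree $1$, and since $k$ is infinite, $B_1$ is not a union of finitely many proper subspaces, yielding $z\in B_1$ avoiding all associated primes, i.e.\ a non-zerodivisor. The two arguments use Cohen--Macaulayness at different points (the paper to identify s.o.p.'s with regular sequences, you to rule out $\fm\in\Ass B$), and your prime-avoidance-over-an-infinite-field step is essentially the content hidden inside Sturmfels' lemma; your version is more self-contained and makes the role of each hypothesis explicit, while the paper's is shorter modulo the two cited results and fits the h.s.o.p.\ framework it uses elsewhere.
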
 

\begin{proof} Since $B$ is a graded Cohen-Macaulay algebra such that $\depth B=\Kdim B=1$, there exists a homogeneous regular element $f_n\in B$ so that $(f_1, \dots, f_{n-1}, f_n)$ is a homogeneous regular sequence for $k[x_1, \dots, x_n]$, so $f_1, \dots, f_{n-1}, f_n$ is a homogeneous system of parameters for $k[x_1, \dots, x_n]$. Since $x_1, \dots, x_n$ is also a homogeneous system of parameters of degree 1 for $k[x_1, \dots, x_n]$, there exists $z=\sum \l_ix_i$ where $\l_i\in k$ such that  $f_1, \dots, f_{n-1}, z$ is a homogeneous system of parameters for $k[x_1, \dots, x_n]$ by \cite[Lemma 2.3.3]{St}.  It follows that $(f_1, \dots, f_{n-1}, z)$ is a homogeneous regular sequence for $k[x_1, \dots, x_n]$ by \cite[Theorem 17.4]{M}, so $z\in B_1$ is a regular element.  
\end{proof}

\begin{lemma} \label{lem.2} Let $B$ be a commutative graded algebra finitely generated in degree 1 such that $\Kdim B=1$.  If $f, f'\in B$ are homogeneous regular elements, then $B[f^{-1}]_0\cong B[{f'}^{-1}]_0$.  In particular, if $z, z'\in B_1$ are regular, then $\sD_z(B)\cong \sD_{z'}(B)$.  
\end{lemma}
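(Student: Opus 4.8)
The plan is to compare both localizations with the common localization at the product $g:=ff'$. Writing $d=\deg f$ and $d'=\deg f'$ (both positive, since a homogeneous regular non-unit of the connected graded algebra $B$ lies in $B_{\geq 1}$), I would show that the two natural maps
\[
B[f^{-1}]_0 \longrightarrow B[(ff')^{-1}]_0 \longleftarrow B[f'^{-1}]_0
\]
are each isomorphisms; composing one with the inverse of the other then gives $B[f^{-1}]_0\cong B[f'^{-1}]_0$, and the ``in particular'' clause is the special case $f=z,\,f'=z'\in B_1$ together with the definition $\sD_z(B)=B[z^{-1}]_0$.

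The key input, and the step I expect to be the main obstacle, is the claim that $\sqrt{(f)}=\sqrt{(f')}=B_{\geq 1}=:\fm$. Since $f,f'\in\fm$ and $\fm$ is the maximal (hence prime) ideal with $B/\fm=k$, we have $\sqrt{(f)},\sqrt{(f')}\subseteq\fm$ immediately. For the reverse inclusion I would show $V_+(f)=\emptyset$: as $f$ is regular it avoids every associated prime, in particular every minimal prime of $B$, so any homogeneous prime $\fp\supseteq(f)$ strictly contains a minimal prime; because $\Kdim B=1$ this forces $\dim B/\fp=0$, and a connected graded domain $B/\fp$ of Krull dimension $0$ generated in degree $1$ must equal $k$, whence $\fp=\fm$. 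Thus $\fm$ is the only homogeneous prime over $(f)$, so $\sqrt{(f)}=\fm$, and symmetrically $\sqrt{(f')}=\fm$. This is exactly where the hypothesis $\Kdim B=1$ enters, and the care lies in the graded dimension argument.

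Granting this, finite generation of $\fm$ yields $f^M\in(f')$ and $f'^N\in(f)$ for suitable $M,N$, say $f^M=f'e$ with $e$ homogeneous of degree $Md-d'$. To see that $B[f^{-1}]_0\to B[(ff')^{-1}]_0$ is surjective, I would take a typical degree-$0$ generator $b/(f^if'^j)$ (so $\deg b=id+jd'$) and use the identity $1/f'=e/f^{M}$ to rewrite it as $be^{j}/f^{\,i+Mj}$; the degree count $\deg(be^j)=(id+jd')+j(Md-d')=(i+Mj)d$ confirms this lies in $B[f^{-1}]_0$. Injectivity is automatic, since $f'$ is a non-zerodivisor in $B$ and hence remains one after inverting the non-zerodivisor $f$, giving an injection $B[f^{-1}]\hookrightarrow B[(ff')^{-1}]$ that is inherited by degree-$0$ parts. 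Therefore $B[f^{-1}]_0\cong B[(ff')^{-1}]_0$, and the identical argument with $f$ and $f'$ interchanged (using $f'^N=fc$) gives $B[f'^{-1}]_0\cong B[(ff')^{-1}]_0$, completing the proof.
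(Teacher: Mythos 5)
Your proof is correct, and it takes a genuinely different (more elementary) route than the paper's. The paper's own proof is two lines of scheme theory: since $f, f'$ are regular and $\Kdim B=1$, it asserts $\Kdim B/(f)=\Kdim B/(f')=0$, so $f$ and $f'$ vanish nowhere on $\Proj B$; hence $\Spec B[f^{-1}]_0\cong \Proj B\cong \Spec B[{f'}^{-1}]_0$ (the standard identification of the chart $D_+(f)$ with the spectrum of the degree-zero localization), and the ring isomorphism follows. Your radical computation $\sqrt{(f)}=\sqrt{(f')}=B_{\geq 1}$ is exactly this same geometric fact, and your derivation of it (a regular element avoids every minimal prime, and in a ring of Krull dimension $1$ any non-minimal prime is maximal, with the graded argument forcing that maximal ideal to be $B_{\geq 1}$) is precisely the justification of ``$\Kdim B/(f)=0$'' that the paper leaves implicit. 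Where you genuinely diverge is in how the isomorphism is extracted: instead of passing through $\Proj$, you compare both degree-zero localizations with the common one $B[(ff')^{-1}]_0$, getting injectivity from regularity of $f'$ in $B[f^{-1}]$ and surjectivity from the substitution $1/f'=e/f^{M}$ together with the degree count $(id+jd')+j(Md-d')=(i+Mj)d$. What this buys is a purely ring-theoretic, explicit isomorphism compatible with the localization maps, with no appeal to the Proj machinery; what the paper's route buys is brevity, by outsourcing both the dimension claim and the chart identification to standard facts. Two minor points in your write-up: the step from ``every homogeneous prime over $(f)$ equals $B_{\geq 1}$'' to ``$\sqrt{(f)}=B_{\geq 1}$'' needs the standard fact that minimal primes over a homogeneous ideal are homogeneous (equivalently, replace an arbitrary prime $\fp\supseteq (f)$ by the homogeneous prime $\fp^{*}\subseteq\fp$ generated by the homogeneous elements of $\fp$, which still contains $(f)$); and finite generation of $B_{\geq 1}$ is not actually needed to produce $f^{M}\in (f')$ --- that is just the definition of the radical applied to $f\in\sqrt{(f')}$.
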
 

\begin{proof} Since $\Kdim B/(f)=\Kdim B/(f')=0$, 
$\Spec B[f^{-1}]_0\cong \Proj B\cong \Spec B[{f'}^{-1}]_0$, so $B[f^{-1}]_0\cong B[{f'}^{-1}]_0$. 
\end{proof}  

By the above lemma, we may write $\sD(B)$ in place of $\sD_z(B)$ if $\Kdim B=1$.   

\begin{lemma} \label{lem.3} 
If $A\in \cC_{3, 1}$, then $C(A)\cong \sD(A^!)$.  
\end{lemma}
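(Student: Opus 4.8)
The plan is to recognize both $C(A)$ and $\sD(A^!)$ as the degree-zero part of a localization of the single commutative graded algebra $A^!$ at a homogeneous regular element, and then to invoke Lemma \ref{lem.2} to conclude that the choice of regular element is immaterial. So the whole statement should reduce to checking that $A^!$ satisfies the hypotheses of Lemma \ref{lem.2} and then reading off the two localizations.

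First I would record the structure of $A^!$. Since $A\in\cC_{3,1}$, Theorem \ref{thm.HMd} (with $n=3$, $m=1$) gives $A^!\in\cB_{3,2}$, so $A^!=k[x_1,x_2,x_3]/(h_1,h_2)$ is a commutative graded algebra finitely generated in degree $1$; because $(h_1,h_2)$ is a regular sequence in the $3$-dimensional polynomial ring, $\Kdim A^!=1$. Hence $A^!$ meets the hypotheses of Lemma \ref{lem.2}, and the notation $\sD(A^!)=\sD_z(A^!)=A^![z^{-1}]_0$ is well defined for any homogeneous regular $z\in A^!_1$; such a $z$ exists by Lemma \ref{lem.wbwd} applied to $A^!\in\cB_{3,2}=\cB_{3,n-1}$. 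On the other side, since $A\in\cC_{3,1}\subseteq\cA_{3,1}$ and $f\in Z(S)_2$, Lemma \ref{lem.asf} supplies a regular (normal, central) element $f^!\in A^!_2$ for which $C(A)=A^![{f^!}^{-1}]_0$ by definition. Now $f^!$ and $z$ are both homogeneous regular elements of $A^!$ (of degrees $2$ and $1$), so Lemma \ref{lem.2} gives
$$
C(A)=A^![{f^!}^{-1}]_0\cong A^![z^{-1}]_0=\sD(A^!),
$$
which is the desired isomorphism.

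Since every ingredient is provided by the earlier results, I expect essentially no obstacle beyond bookkeeping. The only points requiring genuine verification are that $A^!$ is commutative with $\Kdim A^!=1$ (exactly the content of $A^!\in\cB_{3,2}$, via Theorem \ref{thm.HMd}), that it carries a regular element in degree $1$ (Lemma \ref{lem.wbwd}), and that $f^!$ is indeed a homogeneous regular element of $A^!$ (from Lemma \ref{lem.asf}); once these are in hand, Lemma \ref{lem.2} does all the work by equating the localizations at $f^!$ and at $z$.
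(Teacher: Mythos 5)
Your proposal is correct and follows essentially the same route as the paper's own proof: Theorem \ref{thm.HMd} to get $A^!\in\cB_{3,2}$, Lemma \ref{lem.wbwd} for a regular $z\in A^!_1$, Lemma \ref{lem.asf} for the regular element $f^!\in A^!_2$, and Lemma \ref{lem.2} (using $\Kdim A^!=1$) to identify the two degree-zero localizations. The only difference is that you spell out why $\Kdim A^!=1$, which the paper leaves implicit.
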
 

\begin{proof} If $A\in \cC_{3, 1}$, then $A^!\in \cB_{3, 2}$ by Theorem \ref{thm.HMd}, so there exists a regular element $z\in A^!_1$ by Lemma \ref{lem.wbwd}.  Since $\Kdim A^!=1$ and $f^!\in A^!_2$ is regular by Lemma \ref{lem.asf},  $C(A):=A^![f^{-1}]_0\cong A^![z^{-1}]_0=:\sD(A^!)$ by Lemma \ref{lem.2}.   
\end{proof}

\begin{theorem} \label{thm.2} 
If $E=k[x, y]/(f, g)\in \cB_{2, 2}^\vee$, then $\sH^z(f, g)=k[x, y, z]/(f^z, g^z)\in \cB_{3, 2}$, and $\sD_z(\sH^z(f, g))\cong E$. 
\end{theorem}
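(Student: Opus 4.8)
The plan is to verify the two assertions separately: first that $(f^z, g^z)$ is a homogeneous regular normal sequence of degree $2$ in $k[x,y,z]$, which is a $3$-dimensional quantum polynomial algebra, so that $\sH^z(f,g)\in \cB_{3,2}$; and then that $z$ is regular in $\sH^z(f,g)$, so that the isomorphism $\sD_z(\sH^z(f,g))\cong E$ follows at once from Lemma \ref{lem.DHi}. Throughout I will use that, since $E=k[x,y]/(f,g)\in\cB_{2,2}^\vee$ and $k[x,y]$ is commutative, the strongly regular normal condition translates by Remark \ref{rem.3}(1) into the two coprimality conditions $\gcd(f,g)=1$ and $\gcd(f^\vee,g^\vee)=1$ in $k[x,y]$; in particular $f^\vee=f_2\neq 0$ and $g^\vee=g_2\neq 0$, so $f^z$ and $g^z$ are genuinely homogeneous of degree $2$.

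The heart of the argument, and the step I expect to be the main obstacle, is showing $\gcd(f^z,g^z)=1$ in $k[x,y,z]$; once this is known, normality is automatic (the ring is commutative) and regularity of a two-element homogeneous sequence in the UFD $k[x,y,z]$ is equivalent to coprimality, giving $\sH^z(f,g)\in\cB_{3,2}$. To prove coprimality I would suppose $p$ is an irreducible homogeneous common factor of $f^z$ and $g^z$ and split on $\deg p\in\{1,2\}$. If $\deg p=2$, then $f^z$ and $g^z$ are both scalar multiples of $p$, hence proportional, so applying dehomogenization $f$ and $g$ are proportional, contradicting $\gcd(f,g)=1$. If $\deg p=1$, write $p=ax+by+cz$ and exploit the two ``slices'' of homogenization: setting $z=0$ recovers the leading forms $f_2=f^\vee,\ g_2=g^\vee$, while setting $z=1$ recovers $f,g$. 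When $c=0$, the form $p\in k[x,y]$ divides $f^\vee$ and $g^\vee$, contradicting $\gcd(f^\vee,g^\vee)=1$; when $c\neq 0$ and $(a,b)\neq(0,0)$, the dehomogenization $p_z=ax+by+c$ is a nonconstant divisor of $f$ and $g$, contradicting $\gcd(f,g)=1$; and when $c\neq 0,\ (a,b)=(0,0)$ we have $p=z$, forcing the $z$-free term $f_2=f^\vee$ of $f^z$ to vanish, contradicting $f^\vee\neq 0$. Each case is a contradiction, so $\gcd(f^z,g^z)=1$.

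For the regularity of $z$ I would argue by Hilbert series rather than by associated primes. By the first part $(f^z,g^z)$ is a homogeneous regular normal sequence of degree $2$, so Lemma \ref{lem.Tak} gives
$$H_{\sH^z(f,g)}(t)=(1-t^2)^2H_{k[x,y,z]}(t)=\frac{(1-t^2)^2}{(1-t)^3}=\frac{(1+t)^2}{1-t}.$$
The element $z\in\sH^z(f,g)_1$ is central, hence normal, and reducing $f^z,g^z$ modulo $z$ returns their leading forms, so
$$\sH^z(f,g)/(z)=k[x,y,z]/(f^z,g^z,z)=k[x,y]/(f^\vee,g^\vee).$$
As $(f^\vee,g^\vee)$ is a regular sequence of degree $2$ in $k[x,y]$, Lemma \ref{lem.Tak} again gives $H_{k[x,y]/(f^\vee,g^\vee)}(t)=(1-t^2)^2/(1-t)^2=(1+t)^2$. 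Since $(1-t)H_{\sH^z(f,g)}(t)=(1+t)^2=H_{\sH^z(f,g)/(z)}(t)$, the converse direction of Lemma \ref{lem.Tak}, applied to the normal element $z$, shows $z$ is regular in $\sH^z(f,g)$. Finally Lemma \ref{lem.DHi} yields $\sD_z(\sH^z(f,g))\cong k[x,y]/(f,g)=E$, completing the proof.
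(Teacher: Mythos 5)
Your proof is correct, but its first half takes a genuinely different route from the paper. The paper handles both assertions with a single Hilbert series computation: it observes that $k[x,y,z]/(f^z,g^z,z)\cong k[x,y]/(f^\vee,g^\vee)$, computes $H_{k[x,y]/(f^\vee,g^\vee)}(t)=(1-t^2)^2/(1-t)^2=(1-t^2)^2(1-t)/(1-t)^3$ by the forward direction of Lemma \ref{lem.Tak}, and then applies the converse direction of Lemma \ref{lem.Tak} to the three-element homogeneous sequence $(f^z,g^z,z)$ of degrees $2,2,1$ in $k[x,y,z]$ (normality being automatic in the commutative setting). This one stroke yields simultaneously that $(f^z,g^z)$ is regular (as an initial segment) and that $z$ is regular in $\sH^z(f,g)$ (as the last element), after which Lemma \ref{lem.DHi} finishes as in your argument. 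You instead prove regularity of $(f^z,g^z)$ by an elementary UFD argument --- a case analysis on irreducible homogeneous common factors, using $\gcd(f,g)=1$, $\gcd(f^\vee,g^\vee)=1$ and $f^\vee,g^\vee\neq 0$ --- and only then invoke Hilbert series (two further applications of Lemma \ref{lem.Tak}) to get regularity of $z$. Both are valid; your factorization argument is self-contained and makes the geometry visible (the three cases for a linear factor correspond to a common component at infinity, a common affine component, and the line at infinity itself), whereas the paper's trick of adjoining $z$ to the sequence shows that the step you identified as ``the main obstacle'' comes for free: the coprimality of $f^z$ and $g^z$ need never be checked directly, since regularity of the full sequence $(f^z,g^z,z)$ is read off from a single Hilbert series identity.
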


\begin{proof} Since 
$k[x, y, z]/(f^z, g^z, z)\cong k[x, y]/(f^\vee, g^\vee)$ and $(f^\vee, g^\vee)$ is a homogeneous regular sequence of degree 2 in $k[x, y]$, 
$$H_{k[x, y, z]/(f^z, g^z, z)}(t)=H_{k[x, y]/(f^\vee, g^\vee)}(t)=(1-t^2)^2/(1-t)^2=(1-t^2)^2(1-t)/(1-t)^3$$ 
by Lemma \ref{lem.Tak}.  
Since $(f^z, g^z, z)$ is a homogeneous sequence of elements of degree 2, 2, 1 in $k[x, y, z]$, $(f^z, g^z, z)$ is a regular sequence in $k[x, y, z]$ by Lemma \ref{lem.Tak} again.  It follows that  $(f^z, g^z)$ is a homogeneous regular sequence of degree 2 in $k[x, y, z]$, so $\sH^z(f, g) =k[x, y, z]/(f^z, g^z)\in \cB_{3, 2}$.  

Since $z\in \sH^z(f, g)_1$ is regular by the above argument, $\sD_z(\sH^z(f, g))\cong k[x, y]/(f, g)=E$ by Lemma \ref{lem.DHi}. 
\end{proof}

\begin{theorem} \label{thm.1} 
If $B=k[x, y, z]/(f, g)\in \cB_{3, 2}$ and $z\in B_1$ is regular, then $\sD_z(B)\cong k[x, y]/(f_z, g_z)\in \cB_{2, 2}^\vee$. 
\end{theorem}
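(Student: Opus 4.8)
The plan is to verify the two assertions separately: the isomorphism $\sD_z(B) \cong k[x,y]/(f_z, g_z)$, and the membership $k[x,y]/(f_z, g_z) \in \cB_{2,2}^\vee$. The first is immediate. Applying \lemref{lem.ddz}(2) with $S = k[x,y]$ and the homogeneous sequence $H = (f,g)$, and recalling that $\sD_z(B) := B[z^{-1}]_0$ while $\sD_z(f,g) = k[x,y]/(f_z, g_z)$, we obtain $\sD_z(B) = B[z^{-1}]_0 \cong \sD_z(f,g) = k[x,y]/(f_z, g_z)$.

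For the membership, note that since $B = k[x,y,z]/(f,g) \in \cB_{3,2} \subset \cA_{3,2}$, the sequence $(f,g)$ is a homogeneous regular (normal) sequence of degree $2$ in $k[x,y,z]$; as $z \in B_1$ is regular, $(f,g,z)$ is a regular sequence in $k[x,y,z]$. First I would reorder it: regular sequences of positive-degree homogeneous elements in the Cohen--Macaulay ring $k[x,y,z]$ are permutable (\cite{M}), so $(z,f,g)$ is again a regular sequence. Reducing modulo $z$ identifies $k[x,y,z]/(z) \cong k[x,y]$ via $x \mapsto x,\ y \mapsto y,\ z \mapsto 0$, under which $f$ and $g$ map to $f(x,y,0)$ and $g(x,y,0)$; hence $(f(x,y,0), g(x,y,0))$ is a regular sequence in $k[x,y]$.

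The key computation is that $f(x,y,0) = f_z^\vee$. Since $f$ is homogeneous of degree $2$, writing $f = \sum_{i+j+k=2} c_{ijk}\, x^i y^j z^k$ gives $f_z = f(x,y,1) = \sum_{i+j+k=2} c_{ijk}\, x^i y^j$, whose degree-$2$ part is exactly $\sum_{i+j=2} c_{ij0}\, x^i y^j = f(x,y,0)$; similarly for $g$. In particular the regularity, hence nonvanishing, of $f(x,y,0)$ and $g(x,y,0)$ forces $\deg f_z = \deg g_z = 2$ and makes $(f_z^\vee, g_z^\vee) = (f(x,y,0), g(x,y,0))$ a regular sequence. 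Then \remref{rem.3}(2) applies directly: since $(f_z^\vee, g_z^\vee)$ is a regular sequence, $(f_z, g_z)$ is automatically a strongly regular normal sequence of degree $2$ in $k[x,y]$, whence $k[x,y]/(f_z, g_z) \in \cB_{2,2}^\vee$.

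I expect the only genuinely delicate point to be the identification of the leading forms $f_z^\vee, g_z^\vee$ with the reductions $f(x,y,0), g(x,y,0)$ modulo $z$, together with the permutability step that converts regularity of $(f,g,z)$ into regularity of $(z,f,g)$; both are standard, but they must be handled with care so as to guarantee that the dehomogenizations really have degree $2$, since $\cB_{2,2}^\vee$ is a class of degree-$2$ sequences and is the correct target. Everything else reduces to citing \lemref{lem.ddz}, \remref{rem.3}, and the commutative-algebra fact on permuting regular sequences. As an alternative to permutation, one could instead compute $H_{k[x,y,z]/(f,g,z)}(t) = (1-t^2)^2/(1-t)^2$ via \lemref{lem.Tak} and read off the regularity of $(f(x,y,0), g(x,y,0))$ from this Hilbert series, but the degree bookkeeping there is slightly more awkward, so I would favor the permutation argument.
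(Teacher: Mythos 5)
Your proposal is correct, and its overall skeleton matches the paper's: both establish that $(f,g,z)$ is a regular sequence in $k[x,y,z]$, deduce that $(f_z^\vee, g_z^\vee)$ is a regular sequence in $k[x,y]$, and then invoke \remref{rem.3}(2) to upgrade this to strong regularity of $(f_z,g_z)$, with \lemref{lem.ddz}(2) supplying the isomorphism $\sD_z(B)\cong k[x,y]/(f_z,g_z)$. The one genuine divergence is the middle step. The paper identifies $k[x,y,z]/(f,g,z)\cong k[x,y]/((f_z)^\vee,(g_z)^\vee)$, computes its Hilbert series $(1-t^2)^2(1-t)/(1-t)^3=(1-t^2)^2/(1-t)^2$ from regularity of $(f,g,z)$, and reads off regularity of $((f_z)^\vee,(g_z)^\vee)$ from \lemref{lem.Tak}; you instead permute the regular sequence $(f,g,z)$ to $(z,f,g)$ (permutability of homogeneous regular sequences of positive degree in a Noetherian graded ring, via \cite{M}) and reduce modulo $z$ directly. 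Your route is slightly more elementary in that it needs no Hilbert series bookkeeping, and it has the virtue of making explicit the point the paper leaves implicit: that regularity of $f(x,y,0)$ and $g(x,y,0)$ forces these reductions to be nonzero, hence $\deg f_z=\deg g_z=2$ and $(f_z)^\vee=f(x,y,0)$, $(g_z)^\vee=g(x,y,0)$, which is exactly what makes the target $\cB_{2,2}^\vee$ (a class of degree-$2$ sequences) legitimate; the paper needs the same nonvanishing (equivalently $z\nmid f$, $z\nmid g$, which follows from regularity of $z$ in $B$) for its asserted isomorphism and for applying \lemref{lem.Tak} with degrees $2,2$. The trade-off is that the paper stays entirely within its own lemmas, while you import the permutation theorem for regular sequences; the Hilbert series alternative you sketch at the end is precisely the paper's proof.
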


\begin{proof} Since $z\in B_1$ is regular, 
$k[x, y, z]/(f, g, z)\cong k[x, y]/((f_z)^\vee, (g_z)^\vee)$.  Since $(f, g, z)$ is a regular sequence of elements of degree 2, 2, 1 in $k[x, y, z]$, 
$$H_{k[x, y]/((f_z)^\vee, (g_z)^\vee)}(t)=H_{k[x, y, z]/(f, g, z)}(t)=(1-t^2)^2(1-t)/(1-t)^3=(1-t^2)^2/(1-t)^2,$$ 
so $(f_z^\vee, g_z^\vee)$ is a regular sequence in $k[x, y]$ by Lemma \ref{lem.Tak}.  By Remark \ref{rem.3},  $(f_z, g_z)$ is a strongly regular normal sequence in $k[x, y]$, so $\sD_z(B)\cong k[x, y]/(f_z, g_z)\in \cB_{2, 2}^\vee$. 
\end{proof}

\begin{theorem} \label{thm.CF} 
$\cA_{2, 2}^\vee = \cF_2$.  
\end{theorem}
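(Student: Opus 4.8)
The plan is to prove the equality of the two sets of isomorphism classes by separating off the commutative parts. By the observation following Corollary~\ref{cor.ANC}, an algebra $E\in\cA_{2,2}^\vee$ is commutative if and only if $E\in\cB_{2,2}^\vee$, and by definition $\cG_2$ is exactly the commutative part of $\cF_2$. Hence the theorem is equivalent to the two separate equalities $\cB_{2,2}^\vee=\cG_2$ (the commutative case) and $\cA_{2,2}^\vee\setminus\cB_{2,2}^\vee=\cF_2\setminus\cG_2$ (the noncommutative case), and I would establish these independently.

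For the commutative case, the idea is to chain together the bijections already available. By Theorem~\ref{thm.CGb} we have $\cG_2=\{\,C(A)\mid A\in\cC_{3,1}\,\}$, and by Theorem~\ref{thm.HMd} (with $n=3$, $m=1$) we have $\cB_{3,2}=\{\,A^!\mid A\in\cC_{3,1}\,\}$; Lemma~\ref{lem.3} identifies these two families via $C(A)\cong\sD(A^!)$. It then remains to compare $\cB_{3,2}$ with $\cB_{2,2}^\vee$ through (de)homogenization. Theorem~\ref{thm.1} shows $\sD$ carries $\cB_{3,2}$ into $\cB_{2,2}^\vee$ (independent of the choice of $z$ by Lemma~\ref{lem.2}, hence well defined on graded isomorphism classes), while Theorem~\ref{thm.2} carries $\cB_{2,2}^\vee$ into $\cB_{3,2}$ with $\sD_z(\sH^z(f,g))\cong E$. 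Concretely: given $[C(A)]\in\cG_2$, Lemma~\ref{lem.3} and Theorem~\ref{thm.1} give $C(A)\cong\sD(A^!)\in\cB_{2,2}^\vee$, proving $\cG_2\subseteq\cB_{2,2}^\vee$; conversely, given $E=k[x,y]/(f,g)\in\cB_{2,2}^\vee$, Theorem~\ref{thm.2} yields $\sH^z(f,g)\cong A^!$ for some $A\in\cC_{3,1}$, whence $E\cong\sD_z(\sH^z(f,g))\cong\sD(A^!)\cong C(A)\in\cG_2$, proving $\cB_{2,2}^\vee\subseteq\cG_2$.

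For the noncommutative case, I would use the explicit classifications. By Corollary~\ref{cor.ANC}, the natural surjection $\cA_{2,2}^{\vee,st}\setminus\cB_{2,2}^{\vee,st}\to\cA_{2,2}^\vee\setminus\cB_{2,2}^\vee$, and Remark~\ref{rem.stc}, every noncommutative $E\in\cA_{2,2}^\vee$ is isomorphic to one of $k_\lambda[x,y]/(x^2,y^2)$ with $\lambda\neq0,1$, $k_{-1}[x,y]/(x^2,y^2+yx)$, $k_{-1}[x,y]/(x^2,y^2+1)$, or $k_{-1}[x,y]/(x^2+1,y^2+1)$. The first two occur verbatim in Table~\ref{tab-4dimfro}, so only two isomorphisms remain. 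For $k_{-1}[x,y]/(x^2+1,y^2+1)\cong M_2(k)$, I would exhibit the algebra map $x\mapsto\left(\begin{smallmatrix} i & 0 \\ 0 & -i\end{smallmatrix}\right)$, $y\mapsto\left(\begin{smallmatrix} 0 & 1 \\ -1 & 0\end{smallmatrix}\right)$ (with $i^2=-1$), check that it respects $x^2+1=y^2+1=xy+yx=0$, and note that its image spans $M_2(k)$, so it is an isomorphism by dimension count. For $k_{-1}[x,y]/(x^2,y^2+1)\cong k(1\rightleftarrows2)/(xy,yx)$, I would introduce the orthogonal idempotents $e_1=\tfrac12(1+iy)$, $e_2=\tfrac12(1-iy)$ and the elements $p=e_2xe_1$, $q=e_1xe_2$, verify that $\{e_1,e_2,p,q\}$ is a basis with $p^2=q^2=pq=qp=0$, and recognize this as the quiver presentation. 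Since the four algebras of $\cF_2\setminus\cG_2$ are pairwise non-isomorphic (Theorem~\ref{thm-4dimfro}) and all four are thereby realized, the two sets coincide.

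The main obstacle is the noncommutative matching, specifically these two explicit isomorphisms. The Clifford-type identification with $M_2(k)$ is the essential structural point, reflecting that over an algebraically closed field the relevant quaternion-type algebra splits; the idempotent decomposition producing the quiver presentation is the computation most likely to require care. Both, however, ultimately reduce to a surjection between two four-dimensional algebras, so a dimension count closes the argument as soon as the defining relations are checked.
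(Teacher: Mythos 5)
Your proposal is correct and follows essentially the same route as the paper: the same split into the commutative case (handled by chaining Theorem~\ref{thm.CGb}, Theorem~\ref{thm.HMd}, Lemma~\ref{lem.3}, and Theorems~\ref{thm.1} and~\ref{thm.2} to get $\cB_{2,2}^\vee=\cG_2$) and the noncommutative case (matching the list in Corollary~\ref{cor.ANC} against Table~\ref{tab-4dimfro}). Your explicit isomorphisms $k_{-1}[x,y]/(x^2+1,y^2+1)\cong M_2(k)$ and $k_{-1}[x,y]/(x^2,y^2+1)\cong k(1\rightleftarrows 2)/(xy,yx)$ are only cosmetic variants of the ones the paper uses (it works in $k_{-1}[x,y]/(x^2,y^2-1)$ and cites the matrix-algebra identification as known), so the two arguments coincide in substance.
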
 

\begin{proof} If $E\in \cG_2$, then there exists $A\in \cC_{3, 1}$ such that $E\cong C(A)\cong \sD(A^!)\in \cB_{2, 2}^\vee$ by Theorem \ref{thm.CGb}, Lemma \ref{lem.3} and Theorem \ref{thm.1}.  
On the other hand, if $E=k[x, y]/(f, g)\in \cB_{2, 2}^\vee$, then $\sH^z(f, g)\in \cB_{3, 2}$ by Theorem \ref{thm.2}, so $\sH^z(f, g)^!\in \cC_{3, 1}$ by Theorem \ref{thm.HMd}, hence
$E\cong \sD_z(\sH^z(f, g))\cong C(\sH^z(f, g)^!)\in \cG_2$ by Theorem \ref{thm.2}, Lemma \ref{lem.3} and Theorem \ref{thm.CGb}.  So far, we have proved  $\cB_{2, 2}^\vee=\cG_2$.  

It is known that $M_2(k) \cong k_{-1}[x,y]/(x^2+1,y^2+1)$, and it is easy to check that the algebra homomorphism
$$
\varphi: k (\xymatrix{1 \ar@<0.8ex>[r]^-x & 2 \ar@<0.7ex>[l]^-y}) \; /(xy, yx) \to  k_{-1} [x,y] /(x^2, y^2-1) \cong k_{-1} [x,y] /(x^2, y^2+1)
$$
defined by
$$
e_1 \mapsto \frac{1}{2}(1+y), \, e_2 \mapsto \frac{1}{2}(1-y), \, x \mapsto x+yx, \, y \mapsto x-yx
$$
is an isomorphism. By comparing the list of algebras in $\cF_2\setminus \cG_2$ 
in Theorem \ref{thm-4dimfro} with the list of 
sequences in $\cA_{2, 2}^{\vee, st}\setminus \cB_{2, 2}^{\vee, st}$in Corollary \ref{cor.ANC}, we have a surjection 
$$(\cF_2\setminus \cG_2)\to (\cA_{2, 2}^{\vee, st}\setminus \cB_{2, 2}^{\vee, st}) \to  (\cA_{2, 2}^{\vee}\setminus \cB_{2, 2}^{\vee}),$$
which is obviously injective. 
\end{proof}

The following corollary plays an essential role in the subsequent paper \cite{HMW}.  

\begin{corollary} \label{cor.nst} 
For $(f, g, h), (f', g', h')\in \cA_{2, 2}^{\vee, st}\setminus \cB_{2, 2}^{\vee, st}$, $(f, g, h)\sim_{st}(f', g', h')$ in $k\<x, y\>$ if and only if $k\<x, y\>/(f, g, h)\cong k\<x, y\>/(f', g', h')$. 
\end{corollary}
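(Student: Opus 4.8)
The plan is to deduce the statement formally from the bijections already assembled for Theorem \ref{thm.CF}, rather than re-examining the four normal forms by hand. The forward implication is immediate: if $(f,g,h)\sim_{st}(f',g',h')$ in $k\<x,y\>$, then $k\<x,y\>/(f,g,h)\cong k\<x,y\>/(f',g',h')$ by Remark \ref{rem.stc} applied with $S=k\<x,y\>$. So the entire content lies in the converse, and I would phrase that converse as the injectivity of the natural comparison map.

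Concretely, I would introduce the map
\[
\Phi\colon \bigl(\cA_{2,2}^{\vee,st}\setminus\cB_{2,2}^{\vee,st}\bigr)\longrightarrow \bigl(\cA_{2,2}^{\vee}\setminus\cB_{2,2}^{\vee}\bigr),\qquad (f,g,h)\longmapsto k\<x,y\>/(f,g,h),
\]
which is well defined and surjective by Remark \ref{rem.stc} and by the natural surjection recorded just after the definition of $\cA_{n,m}^{\vee,st}$. The corollary is then exactly the assertion that $\Phi$ is injective. To prove injectivity I would invoke the structure produced in the proof of Theorem \ref{thm.CF}: there one has a surjection $a\colon(\cF_2\setminus\cG_2)\to(\cA_{2,2}^{\vee,st}\setminus\cB_{2,2}^{\vee,st})$, obtained by matching the normal forms of Corollary \ref{cor.ANC} against the list of Theorem \ref{thm-4dimfro}, together with the fact that the composite $\Phi\circ a$ is a bijection. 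The implication is a one-line diagram chase: if $\Phi(y_1)=\Phi(y_2)$, choose $x_i$ with $a(x_i)=y_i$ (possible since $a$ is surjective); then $(\Phi\circ a)(x_1)=(\Phi\circ a)(x_2)$, so $x_1=x_2$ by injectivity of $\Phi\circ a$, whence $y_1=a(x_1)=a(x_2)=y_2$. Thus $\Phi$ is injective, giving the converse, and the corollary follows.

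The single substantive input — the step I would flag as the crux — is the injectivity of $\Phi\circ a$, i.e.\ that distinct normal forms of Corollary \ref{cor.ANC} yield non-isomorphic algebras. Across the four types this is precisely the pairwise non-isomorphism of the algebras in $\cF_2\setminus\cG_2$ furnished by Theorem \ref{thm-4dimfro}. The delicate part is the one-parameter type, where the two parametrizations must be made to agree: on the algebra side $k_\l[x,y]/(x^2,y^2)\cong k_{\l'}[x,y]/(x^2,y^2)$ holds if and only if $\l'=\l^{\pm1}$ (Theorem \ref{thm-4dimfro}), while on the sequence side the $st$-equivalence $(x^2,y^2,xy-\l yx)\sim_{st}(x^2,y^2,xy-\l^{-1}yx)$ is witnessed by the swap $\phi(x)=y,\ \phi(y)=x$ followed by an $\sim_s$ rescaling of the last entry by $-\l^{-1}$. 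These two facts together make $a$ well defined on the parameter and $\Phi\circ a$ injective. Since all of this is already secured in the development leading to Theorem \ref{thm.CF}, the remaining work is only the elementary surjection/injection argument above, and I would present the proof in that compact form.
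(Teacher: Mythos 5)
Your proposal is correct and takes essentially the same approach as the paper: the paper derives Corollary \ref{cor.nst} directly from the chain of surjections $(\cF_2\setminus \cG_2)\to (\cA_{2, 2}^{\vee, st}\setminus \cB_{2, 2}^{\vee, st}) \to (\cA_{2, 2}^{\vee}\setminus \cB_{2, 2}^{\vee})$ constructed in the proof of Theorem \ref{thm.CF}, whose composite is injective, which is precisely the surjection/injection diagram chase you spell out. The crux you flag --- that distinct normal forms of Corollary \ref{cor.ANC} give non-isomorphic algebras, with the parameter matching $k_\l[x,y]/(x^2,y^2)\cong k_{\l'}[x,y]/(x^2,y^2)$ iff $\l'=\l^{\pm 1}$ mirrored by the $st$-equivalence $(x^2,y^2,xy-\l yx)\sim_{st}(x^2,y^2,xy-\l^{-1}yx)$ --- is exactly the input the paper takes from Theorem \ref{thm-4dimfro}.
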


The above corollary is false for  $(f, g, h), (f', g', h')\in \cB_{2, 2}^{\vee, st}$ as the following example shows. 

\begin{example} \label{ex.stc}
$F = (x^2-1, y^2-1)$ and $F' = (x^2-y, y^2-1)$ are strongly regular normal sequences in $k[x,y]$ 
such that $k[x,y]/I_F \cong k^4 \cong k[x,y]/I_{F'}\in \cB_{2,2}^\vee$.
For every $\phi\in \Aut^{\ZZ}k\<x, y\>= \GL_2(k)$, $\phi(xy-yx)=(\det \phi)(xy-yx)$.  For every $f=\a (x^2-1)+\b (y^2-1)$ where $\a, \b\in k$, $f_1=0$, so $\phi(f)_1=0$.  It follows that $\phi(f)\neq x^2-y$, so $(x^2-1, y^2-1, xy-yx)\not \sim _{st}(x^2-y, y^2-1, xy-yx)$ in $k\<x, y\>$ (see Remark \ref{rem.stc}).  
\end{example} 
 
By Theorem \ref{thm.CF}, we can write every algebra in Theorem \ref{thm-4dimfro} in the form of $\cA_{2, 2}^\vee$. 

\begin{corollary} \label{cor-avee22}
Every 4-dimensional Frobenius algebra is isomorphic to exactly one of the algebras listed in Table \ref{tab-avee22} where $k_\l[x,y]/(x^2,y^2)\cong k_{\l'}[x, y]/(x^2, y^2)$ if and only if $\l'=\l^{\pm 1}$.
\begin{center}
\begin{table}[h]
\begin{threeparttable}
\centering
\caption{Algebras in $\cA^\vee_{2,2}$.} \label{tab-avee22}
\begin{tabular}{|c|}
\hline
{\rm $\cB_{2,2}^\vee$}  \\ \hline 
$k[x,y]/(x^2-1,y^2-1)$, $k[x,y]/(x^2-y-1,y^2-1)$, $k[x,y]/(x^2-\frac{\sqrt{3}}{2}y-1,y^2-\frac{\sqrt{3}}{2}x-1)$, \\
$k[x,y]/(x^2,y^2-1)$, $k[x,y]/(x^2,y^2-x)$, $k[x,y]/(x^2,y^2) $ \\ 
\hline \hline
{\rm $\cA_{2,2}^\vee\setminus\cB_{2,2}^\vee$}  \\ \hline  
$k_{-1} [x,y]/(x^2 + 1,y^2 + 1)$, $k_{-1} [x,y]/(x^2,y^2 + 1)$, 
$k_{-1}[x,y] / (x^2, y^2+yx)$, $k_\l[x,y]/(x^2,y^2)$ where $\l \neq 0, 1$ \\\hline
\end{tabular}
\end{threeparttable}
\end{table}
\end{center}
\end{corollary}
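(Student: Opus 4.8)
The plan is to derive the corollary from Theorem~\ref{thm.CF} together with the two classifications already in hand. Theorem~\ref{thm-4dimfro} lists the ten isomorphism classes of $4$-dimensional Frobenius algebras (six in $\cG_2$, four in $\cF_2\setminus\cG_2$), while the proof of Theorem~\ref{thm.CF} gives not merely $\cA_{2,2}^\vee=\cF_2$ but the refined statements $\cB_{2,2}^\vee=\cG_2$ and a bijection $(\cA_{2,2}^\vee\setminus\cB_{2,2}^\vee)\to(\cF_2\setminus\cG_2)$. So the set of classes is already known to have exactly ten elements, and the only task is to verify that the ten algebras displayed in Table~\ref{tab-avee22} are pairwise non-isomorphic representatives of these classes, correctly sorted into the $\cB_{2,2}^\vee$ row and the $\cA_{2,2}^\vee\setminus\cB_{2,2}^\vee$ row.

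First I would dispatch the noncommutative row. By Corollary~\ref{cor.ANC} every class in $\cA_{2,2}^\vee\setminus\cB_{2,2}^\vee$ is $st$-represented by one of $(x^2,y^2,xy-\l yx)$, $(x^2,xy+y^2,xy+yx)$, $(x^2,y^2+1,xy+yx)$, $(x^2+1,y^2+1,xy+yx)$ with $\l\neq 0,1$, whose coordinate rings are exactly $k_\l[x,y]/(x^2,y^2)$, $k_{-1}[x,y]/(x^2,y^2+yx)$ (after the change $y\mapsto -y$), $k_{-1}[x,y]/(x^2,y^2+1)$ and $k_{-1}[x,y]/(x^2+1,y^2+1)$ appearing in the table. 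To match these with $\cF_2\setminus\cG_2$ I would invoke the explicit isomorphisms produced in the proof of Theorem~\ref{thm.CF}, namely $M_2(k)\cong k_{-1}[x,y]/(x^2+1,y^2+1)$ and the isomorphism from the two-vertex path algebra modulo $(xy,yx)$ onto $k_{-1}[x,y]/(x^2,y^2+1)$; the remaining two entries occur verbatim in both lists. The distinctness of the $k_\l[x,y]/(x^2,y^2)$ and the parameter identification $\l'=\l^{\pm1}$ are inherited directly from Theorem~\ref{thm-4dimfro}.

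For the commutative row I would realise each listed $k[x,y]/(f,g)$ as an explicit product of local Artinian rings. Every displayed pair satisfies $(f^\vee,g^\vee)=(x^2,y^2)$, so by Remark~\ref{rem.3} it is automatically a strongly regular normal sequence and the quotient lies in $\cB_{2,2}^\vee$. In five of the six cases one generator is linear in a variable, so eliminating that variable presents the algebra as $k[x]/(q(x))$ for an explicit quartic $q$, and the factorisation type of $q$ reads off the class via the Chinese Remainder Theorem: four distinct roots give $k^4$, a double and two simple roots give $k^2\times k[x]/(x^2)$, a simple and a triple root give $k\times k[x]/(x^3)$; moreover $k[x,y]/(x^2,y^2-1)$ splits in $y$ as $(k[x]/(x^2))^2$, and $k[x,y]/(x^2,y^2-x)\cong k[y]/(y^4)$ via $x=y^2$. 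The sixth algebra $k[x,y]/(x^2,y^2)$ is the unique non-curvilinear one: it has $\dim_k \fm/\fm^2=2$, hence is not a quotient of a polynomial ring in a single variable, which both identifies it and separates it from the other five. Since $\cB_{2,2}^\vee=\cG_2$ has exactly six elements and these six types are pairwise non-isomorphic, the commutative list is complete.

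The main obstacle is the curvilinear triple-point entry, the one carrying the distinguished scalar: here the two affine conics must meet in one reduced point and one point of intersection multiplicity three, which after the elimination above amounts to requiring the quartic $q$ to have one simple and one triple root. Imposing that condition is precisely what forces the special value of the scalar, and verifying it is the one genuinely delicate computation; the remaining commutative identifications, and the whole noncommutative row, are then bookkeeping on top of the already-proved equalities $\cA_{2,2}^\vee=\cF_2$ and $\cB_{2,2}^\vee=\cG_2$.
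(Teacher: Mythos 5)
Your overall route is the same as the paper's: deduce the corollary from Theorem \ref{thm.CF} (together with the refinements $\cB_{2,2}^\vee=\cG_2$ and the bijection on noncommutative parts established in its proof), Theorem \ref{thm-4dimfro}, and Corollary \ref{cor.ANC}, and then identify each displayed algebra with its isomorphism class. Your matching of the noncommutative row and of five of the six commutative entries is correct. But there is a genuine gap at exactly the step you yourself flag and then defer: you never verify that the third commutative entry is $k\times k[x]/(x^3)$, and this verification in fact \emph{fails} for the scalar printed in the table. Put $f=x^2-ay-1$, $g=y^2-ax-1$. Eliminating $y$ gives $k[x,y]/(f,g)\cong k[x]/(q)$ where
$$
q(x)=(x^2-1)^2-a^3x-a^2=(x^2-ax-1)(x^2+ax+a^2-1).
$$
The second factor has a double root iff $4-3a^2=0$, and the two factors share a root iff $x=-a/2$ is a root of the first factor, i.e.\ iff $3a^2-4=0$; hence $q$ has a triple root iff $a^2=4/3$, i.e.\ $a=\pm\frac{2}{\sqrt{3}}=\pm\frac{2\sqrt{3}}{3}$. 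For the table's value $a=\frac{\sqrt{3}}{2}$ one has $a^2=\frac{3}{4}$, so both quadratic factors have simple roots and no common root, and therefore
$$
k[x,y]/(x^2-\tfrac{\sqrt{3}}{2}y-1,\;y^2-\tfrac{\sqrt{3}}{2}x-1)\cong k^4.
$$

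Consequently your sentence ``imposing that condition is precisely what forces the special value of the scalar'' is not merely unproved but false for the scalar as displayed: the triple-root condition forces the \emph{reciprocal} value $\frac{2\sqrt{3}}{3}$, with which the entry is indeed $k\times k[x]/(x^3)$ (a triple point at $(-\frac{1}{\sqrt{3}},-\frac{1}{\sqrt{3}})$ and a simple point at $(\sqrt{3},\sqrt{3})$). With the printed value, the table lists $k^4$ twice and omits $k\times k[x]/(x^3)$, so the statement read literally is false --- evidently a typo in the paper, but precisely the kind of error your verification was supposed to catch and instead silently passes over. The fix is to carry out the computation above and replace $\frac{\sqrt{3}}{2}$ by $\frac{2\sqrt{3}}{3}$ (equivalently $\frac{2}{\sqrt{3}}$); with that emendation, the rest of your argument (the equality $\cB_{2,2}^\vee=\cG_2$, the curvilinear analysis of the other five commutative entries, and the noncommutative row via Corollary \ref{cor.ANC} and the explicit isomorphisms in the proof of Theorem \ref{thm.CF}) does complete the proof.
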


\end{document}